\def\re{\mathbb{R}}
\newcommand{\der}[3][]{\frac{\partial^{#1} #2}{\partial #3^{#1}}}
\newcommand{\gv}[1]{\mathbf{\Gamma}^{ ( #1 ) }}
\newcommand{\stack}[2]{\genfrac{}{}{0pt}{}{#1}{#2}}
\numberwithin{equation}{section}
\def\bb{\begin{equation}}
\def\ee{\end{equation}}
\def\bse{\begin{subequations}}
\def\ese{\end{subequations}}
\def\de{\delta}
\def\o{\omega}
\def\O{\Omega}
\def\w{\wedge}
\def\pa{\parallel}
\def\pe{\perp}
\def\dpa{d_\parallel}
\def\dpe{d_\perp}
\def\g{\Gamma}
\def\tg{\tilde{\g}}
\def\al{\alpha}
\def\be{\beta}
\def\ga{\gamma}
\def\P0{\mathcal{P}_0}
\def\Pm{\mathcal{P}_{>0}}
\def\lk{\Delta_{K}}
\def\sql{(-\Delta_K)}
\def\ep{\epsilon}
\def\l{\lambda}
\def\ls{\lesssim}
\def\rk{\re^{3+1}\times K}
\def\rt{\rfloor_{\der{}{t}}}
\newtheorem{defin}{Definition}[section]
\newtheorem{lemma}[defin]{Lemma}
\newtheorem{prop}[defin]{Proposition}
\newtheorem{theorem}[defin]{Theorem}
\newtheorem{corr}[defin]{Corollary}
\newtheorem{claim}[defin]{Claim}
\newtheorem*{rema}{Remark}
\author[b.Ettinger]{Boris Ettinger}
\address{Department of Mathematics \newline \indent University of California \newline \indent Berkeley, California 94720, USA.}
\email{ettinger@math.berkeley.edu}
\title[Dynamics of the three-form field]{Well-posedness of the equation for the three-form field in the eleven dimensional supergravity}
\begin{document}

\begin{abstract}
We analyze a semi-linear gauge-invariant wave equation which arises in the theory of supergravity. We prove that the Cauchy problem is well-posed globally in time for the fixed-gauge version of the equation for small compactly supported smooth data. We employ the method of Klainerman vector fields along with a finer analysis of the nonlinearity to establish an integrable decay in the energy estimate.
\end{abstract}
\maketitle 
\section{Introduction}
Let $K$ be a compact 7-dimensional Riemannian manifold. Then the product $\rk$ becomes an 11-dimensional Lorentzian manifold. For a 3-form $u$ on $\rk$ we use the Hodge star $*$ and the de Rahm differential $d$ of the product to formulate the following Cauchy problem:
\bse
\label{eq:Cauchy}
 \bb
\square_{\re^{3+1}}u-\Delta_K u= *(du\w du).
\ee
\bb 
u(0,\cdot)=u_0,u_t(0,\cdot)=u_1.
\ee
\ese
In this article, we will prove the following statement.
\begin{theorem}
\label{th:main}
There exist positive $N,\ep$ such that the Cauchy problem (\ref{eq:Cauchy}) is globally well-posed provided that initial data is localized in the ball of radius 1 in $\re^3$ for every point of $K$ and obeys
\[
 \|u_0\|_{H^N(\re^{3}\times K)}+\|u_1\|_{H^{N-1}(\re^{3}\times K)}\leq \ep.
\]
Moreover, in such a case the solution $u$ satisfies the following estimates
\[
 \sum\limits_{|\alpha|\leq N} \|\nabla_{t,x,y}\g^\alpha u(t)\|_{L^2(\re^{3}\times K)}\leq C\ep (1+t)^{1/12},
\]
\[
 \sum\limits_{|\alpha|\leq N-9 } \|\nabla_{t,x,y}\g^\alpha u(t)\|_{L^2(\re^{3}\times K)}\leq C\ep,
\]
\begin{align*}
 &(1+t)\sum\limits_{|\alpha|\leq N-18} \| \nabla_{t,x,y}\g^\alpha \P0 u(t)\|_{L^{\infty}(\re^{3}\times K)}\\
+&(1+t)^{3/2}\sum\limits_{|\alpha|\leq N-18} \|\nabla_{t,x,y}\g^\alpha \Pm u(t)\|_{L^{\infty}(\re^{3}\times K)}\leq C\ep.
\end{align*}
where $\P0,\Pm$ are the spectral projections of the operator $\lk$ defined in Section \ref{sc:hodge}, $\g^{\alpha}$ are compositions of a subset of Klainerman vector fields together with the operator $\sql^{1/2}$, which are defined in Section \ref{sc:lin}, $\nabla_{t,x,y}$ is the gradient in all the derivatives of $\re^{3+1}\times K$ and $C$ is a constant that depends only on $N$ and the geometry of $K$.
\end{theorem}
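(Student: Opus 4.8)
The plan is to prove the three estimates simultaneously by a continuity (bootstrap) argument, after assembling the linear theory on $\rk$. Using the Hodge--de Rham spectral decomposition of $\lk$ from Section \ref{sc:hodge}, write $u=\P0 u+\Pm u$: the massless part $\P0 u$ is fibrewise harmonic, hence fibrewise $d_K$-closed, so $d(\P0 u)=d_{\re^{3+1}}(\P0 u)$ and $\P0 u$ solves the genuine $(3+1)$-dimensional wave equation $\square_{\re^{3+1}}(\P0 u)=\P0(*(du\w du))$; on the $j$-th eigenmode of $\Pm u$ the operator becomes the Klein--Gordon operator $\square_{\re^{3+1}}+\l_j$ with $\l_j>0$. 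Since $\sql^{1/2}$ acts only on the $K$-factor it commutes with $\square_{\re^{3+1}}$ and with the Poincar\'e generators, so the $\g^\alpha$ are a commuting family of symmetries of the linear flow (the scaling field, if used, being restricted to the massless sector, where it commutes with $\square_{\re^{3+1}}$). I will need three linear inputs: (i) the energy identity with Gronwall, $\|\nabla_{t,x,y}\g^\alpha u(t)\|_{L^2}\ls\|\nabla_{t,x,y}\g^\alpha u(0)\|_{L^2}+\int_0^t\|\g^\alpha(*(du\w du))(s)\|_{L^2}\,ds$; (ii) the fibrewise Klainerman--Sobolev inequality on $\re^{3+1}$ together with Sobolev embedding on the $7$-manifold $K$, bounding $(1+t)$ times an $L^\infty$ norm of $\P0 u$ by an $L^2$ norm with six more $\g$'s (two from $\re^3$, four from $K$); (iii) the Klein--Gordon dispersive estimate giving the $(1+t)^{-3/2}$ pointwise decay for $\Pm u$ in terms of $L^2$ norms with a fixed number of extra $\g$'s.

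The crux is the finer analysis of the nonlinearity. Writing $d=d_{\re^{3+1}}+d_K$, the key point is that $du\w du$ is a wedge of \emph{exact} forms: whenever the Hodge star contracts a differentiated slot of one $du$ against a differentiated slot of the other, the antisymmetrisation built into the wedge forces those derivatives into the combinations $\partial_\mu\phi\,\partial_\nu\psi-\partial_\nu\phi\,\partial_\mu\psi$ or $\eta^{\mu\nu}\partial_\mu\phi\,\partial_\nu\psi$ --- classical null forms of components of $u$ --- contracted algebraically against the smooth bounded metric $g_K$ on the undifferentiated slots. In particular the massless self-interaction $\P0(*(du\w du))$ is a pure $(3+1)$-dimensional null-form expression, since $d(\P0 u)$ carries no $K$-derivatives. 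The only quadratic pieces that are \emph{not} null forms involve the star contracting two $K$-derivatives, $g_K^{ab}\partial_a(\,\cdot\,)\,\partial_b(\,\cdot\,)$; but every $K$-derivative enters the nonlinearity through $d_K$ (again by wedge antisymmetry), and $d_K\P0 u=0$, so such a piece must carry a factor $d_K\Pm u$ --- a massive mode, decaying like $(1+t)^{-3/2}$, with the $d_K$ comparable on each eigenmode to $\sql^{1/2}$ and hence recovered from the $\g$-hierarchy at the cost of one symmetry. Any mixed interaction $\P0\times\Pm$ likewise inherits the $(1+t)^{-3/2}$ of its massive factor. Thus, up to bounded algebraic contractions, $*(du\w du)$ is the sum of (a) a spacetime null form in the massless variables and (b) quadratic terms having at least one $(1+t)^{-3/2}$-decaying factor.

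Now the bootstrap: on $[0,T)$ assume the three estimates with $C$ replaced by a larger constant and improve them. Set $E_N(s):=\sum_{|\alpha|\le N}\|\nabla_{t,x,y}\g^\alpha u(s)\|_{L^2}$. In (i), apply $\g^\alpha$ and distribute by Leibniz: the factor carrying more than half of the $\g$'s goes in $L^2$, the other (at most $N-18$ symmetries, so subject to the bootstrapped pointwise bounds) in $L^\infty$. For type-(b) terms the $L^\infty$ factor decays like $(1+s)^{-3/2}$, integrable, contributing only a bounded correction. For type-(a) terms, away from the very top order one may spend one more boost or rotation on a factor to expose the null structure, upgrading the relevant component's pointwise decay to $(1+s)^{-2}$, again integrable; since the Leibniz splits at order $\le N-9$ only ever produce orders $\le N-9$, the intermediate and lower-order energies stay \emph{bounded}. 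At the very top order the null trade would require order $N+1$, so one Leibniz term is estimated crudely --- its $L^2$ factor bounded by $E_N(s)\ls C\ep(1+s)^{1/12}$, its $L^\infty$ factor by $\ep(1+s)^{-1}$ --- giving $\frac{d}{ds}E_N(s)\ls C\ep^2(1+s)^{-11/12}$ and hence $E_N(t)\ls\ep+C\ep^2(1+t)^{1/12}$, which closes for $\ep$ small; this is precisely the stated $(1+t)^{1/12}$ loss, and it does not propagate to lower orders. Finally the pointwise bounds at order $\le N-18$ are re-derived from the now-bounded $L^2$ norms at order $\le N-12$ via (ii) for $\P0 u$ and (iii) for $\Pm u$. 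Together with the standard local existence and continuation theory, this closes the continuity argument and yields the global statement.

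The real obstacle is making point (b) precise: one must check that after \emph{all} the index contractions in $*(du\w du)$ --- raising indices with $\eta\oplus g_K$, contracting against the volume form --- the pieces that fail to be spacetime null forms are \emph{always} attached to a $d_K\Pm u$ and never to the massless wave--wave self-interaction, and simultaneously keep the derivative-and-eigenvalue bookkeeping tight enough that the top-order $(1+t)^{1/12}$ loss infects neither the intermediate $L^2$ norms nor the pointwise decay. By comparison the Klein--Gordon sector, the commutation of $\sql^{1/2}$, and the flat null-form calculus are routine.
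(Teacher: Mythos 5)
Your high-level strategy matches the paper's: decompose $u=\P0 u+\Pm u$, observe $d_\pe\P0=0$ so the massless self-interaction $*(d_\pa \P0 u\wedge d_\pa\P0 u)$ is a pure $\re^{3+1}$ null form while every other quadratic piece carries a $\Pm$ factor, then run a three-tier bootstrap (growing top $L^2$, bounded intermediate $L^2$, pointwise decay split $(1+t)^{-1}$/$(1+t)^{-3/2}$ by $\P0$/$\Pm$). The ``real obstacle'' you flag at the end is in fact the easy part and is handled in the paper very cleanly (Claim \ref{cl:har} together with Proposition \ref{pr:null}, ``Coefficients'' proof): once you write $*(du\wedge du)=B(\P0 u,\P0 u)+2C(\P0 u,\Pm u)+D(\Pm u,\Pm u)$, expanding the wedge of two $\dpa$-differentiated forms in coordinates produces precisely the antisymmetric coefficients $\partial_pA\,\partial_sB-\partial_sA\,\partial_pB$, i.e.\ linear combinations of $Q_{ps}$; the Hodge $*$ only permutes components and multiplies by a smooth bounded function of the $K$-variables, so it does not spoil the null structure or the $L^2$-equivalence. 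No careful index-contraction audit is required.

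The genuine gap in your plan is the linear $L^\infty$ input (ii). You propose the fibrewise Klainerman--Sobolev inequality for $\P0 u$ and say that the scaling field, ``if used, is restricted to the massless sector.'' This does not close: the Leibniz expansion of $\g^\alpha$ applied to the cross term $C(\P0 u,\Pm u)$ transfers scaling onto the massive factor $\Pm u$, and scaling does not commute with the Klein--Gordon operator $\square_{\re^{3+1}}-\lk$, so the resulting commutator terms are not controlled by your bootstrap norms. The paper avoids the scaling vector field entirely: $\tg$ consists only of translations and Lorentz rotations/boosts, and the $(1+t)^{-1}$ decay for the $\P0$ sector is obtained from the Metcalfe--Stewart dyadic estimate (Proposition \ref{pr:wavedec}, \cite[Proposition 3.1]{MS2}), while the $(1+t)^{-3/2}$ decay for $\Pm$ comes from the H\"ormander/Klainerman dyadic Klein--Gordon estimate (Theorem \ref{th:kgdec}, \cite[Proposition 7.3.6]{Hor}); both bound $L^\infty$ at time $t$ by the initial energy plus a dyadic supremum $\sum_k 2^k\sup_{\tau\sim 2^k}\|\gv{M}F(\tau)\|_2$ of the forcing, using only $\g$'s. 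To make your bootstrap consistent you would need to replace Klainerman--Sobolev with these scaling-free dyadic estimates (or some equivalent), at which point your argument becomes essentially the paper's. A minor secondary remark: your derivative counts (``six more $\g$'s'') do not quite match the paper, which loses $9$ in each tier (two from the wave estimate, nine from the Klein--Gordon estimate after elliptic regularity, with the worse count taken uniformly in Corollary \ref{cr:dec}), leading to the hypothesis $\tfrac{N}{2}\leq N-18$.
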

The theorem is true if the Cauchy data is supported in a larger ball but then the constant $\ep$ has to decrease as a negative power of the size of the support.
\par
The equations have a connection to the theory of supergravity, which we explain in the next section. The mathematical aspects of the supergravity theory have recently drawn the attention in the context of conformal geometry \cite{RG},\cite{Kantor}, where the space-time  was assumed to be a Riemannian manifold. The Lorentzian case was investigated earlier (see \cite{CB} and references therein).
\par
Our methods are inspired by the work of Metcalfe, Sogge and Stewart \cite{MSS} and Metcalfe and Stewart \cite{MS2} who analyze the quasi-linear wave equation on $\re^{3+1}\times D$, where $D$ is a bounded domain in $\re^n$ with various non-linearities and boundary conditions. Their results do not cover the case in study but we employ some of their ideas in this work.
\par 
The article is organized as follows. In section \ref{sc:bg} we derive the equation from a gauge-invariant Lagrangian and explain how to fix the gauge. In section \ref{sc:hodge} we recall some necessary facts from Riemannian geometry. In section \ref{sc:lin} we adapt the linear estimates for the wave equation on $\re^{3+1}$ to the product $\rk$. In section \ref{sc:nonlin} we perform a deeper analysis of the nonlinearity. In section \ref{sc:proof}, we provide the proof of Theorem \ref{th:main}.
\par We will denote by $k$ a constant which depends only on $N$ and the geometry of $K$, this constant may change from line to line but for each inequality below there is an apriori computable constant such that the inequality holds. We will also write $A \ls B$ to mean $A\leq kB$.
\section{Background}
\label{sc:bg}

\subsection{Physical Motivation}
The supergravity theory is a model of classical physics, which describes the low-energy, classical limit of the superstring theory.  The model describes the interaction of the field of gravity with other fields. In one of the simplest setups, one considers an 11-dimensional Lorentzian manifold as a space-time, with gravity field $g$ and a field, whose strength is described by a closed differential 4-form $F$. The Lagrangian is prescribed only locally by restricting the attention to an open, topologically trivial subset $U$ of the space-time. Then one solves the equation for a potential of $F$ on $U$:
\[
 dA=F.
\]
Then the Lagrangian is
\[
 \mathcal{L}=\int\limits_U Rdv+\int\limits_{U} F \w *F+ \int\limits_U A\w F \w F,
\]
where $R$, $dv$ and $*$ are the (scalar) Ricci curvature, the volume form and the Hodge $*$ corresponding to $g$, respectively. The reader should consult \cite{Witten} and textbooks for physical aspects of this theory.
\subsection{The Lagrangian and the equation}
We will simplify our setup to consider the product manifold $\rk$ as the fixed space-time, where $K$ is a 7-dimensional compact Riemannian manifold without a boundary. The metric on the product space will be the product of the Minkowski metric and the metric on $K$. We will also assume that the field strength $F$ is not only closed but also exact, namely there exists a global 3-form $u$, such that
\[
 du=F.
\]
Then $u$ will be the dynamical variable, for which we define a classical field theory Lagrangian
\bb
\mathcal{L}(u)=\int\limits_{\rk} du \w *du+\int\limits_{\rk}{ u \w du \w du}.
\ee 
The formal Euler-Lagrange equations are 
\bb
\label{eq:main1}
d*du=-du\w du. 
\ee
We will take the Hodge-dual on both sides of the equation and use the notation $\de=*d*$ to arrive to the following equation
\bb
\label{eq:main1*}
 \de du= -*(du\w du).
\ee
Since our space time is a product manifold then most of the operators which act on it can be decomposed in a natural way as operators acting on either on $\re^{3+1}$ or $K$. We will  denote by subscript $\parallel$ the operators acting on $\re^{3+1}$ and by subscript $\perp $ the operators acting on $K$. For instance, we will have
\bb
\label{eq:decompose}
d=d_{\rk}=d_{\re^{3+1}}\otimes \text{id}_K+\text{id}_{\re^{3+1}}\otimes d_{K}=\dpa\otimes \text{id}_K+\text{id}_{\re^{3+1}}\otimes \dpe.
\ee
The tensor notation should be understood in terms of operations on differential forms $\O(\re^{3+1})$ and $\O(K)$.
The equation (\ref{eq:decompose}) we will colloquially write
\bb
\label{eq:d}
d=\dpa+\dpe.
\ee

\subsection{Hodge star and form Laplacian}
\label{sbs:*}
Let us recall a few simple facts regarding the Hodge star operator, which is an operator that takes differential $n$-forms to differential $(11-n)$-forms. Let $x^i,i=0..3$ be the coordinates on $\re^{3+1}$ and $x^i,i=4..10$ be a coordinate patch on $K$ at a point where the metric tensor is the identity and its derivative vanish, i.e. normal coordinates.
Then the Hodge dual $*$ for an $n$-form $v=v_{i_1 i_2 ...i_n}dx^{i_1} dx^{i_2} ...dx^{i_n}$, $i_k=0..10$ is defined as follows:
\bb
\label{eq:dual}
*v=\sum_{i_0, i_2 ...,i_n=0}^{10}(-1)^{\alpha(\{i_0...i_n\})}\varepsilon^{i_0...i_{10}}v_{i_0 i_2 ...i_n}dx^{i_{n+1}}dx^{i_{n+2}}...dx^{i_{10}}
\ee
where 
\[
 \varepsilon^{i_0...i_{10}}=
\begin{cases}
 0, \quad i_k=i_l\text{ for some }k\neq l,\\
 1, \quad i_0..i_{10}\text{ is an even permutation},\\
 -1, \quad i_0..i_{10}\text{ is an odd permutation}
\end{cases}
\]
and 
\[
 \alpha(\{i_0...i_n\})=\begin{cases} 1,\quad 0\in \{i_0...i_n\}\\ 0,\quad 0\notin  \{i_0...i_n\}. \end{cases}.
\]
Thus the $*$ operator exchanges the components of the forms, multiplying those containing the time $x^0$ coordinate by $-1$. 
Next, we define $\de$ which takes $n$-forms to $(n-1)$-forms by
\[
 \de u=(-1)^{\text{deg} u} *d(*u).
\]
Lastly we define
\[
 \square_{\rk}=-d\de -\de d.
\]

We have the following facts about $*$ and $\de$
\begin{itemize}
 \item  $**u=(-1)^{\text{deg} u} u$.
\item $u\w *v =g(u,v)d\text{vol}$, where $g$ is the Lorentzian metric on $\rk$ and $d\text{vol}$ is the volume form.
\item The operator $*$ is an isometry and in particular $d*u=0$ if and only if $\de u=0$.
\item $\de$ is the Lorentzian adjoint of $d$ in the sense that 
\[
 \int\limits_{\rk}{ g(\de u,v)d\text{vol}}=\int\limits_{\rk}{ g(u,dv)d\text{vol}}.
\]
\item In normal coordinates and with notational conventions of relativity we have
\[
 (\de u)_{\al_1\al_2...}=-\partial^{\al_0}u_{\al_0\al_1\al_2...}.
\]
\[
 (\square_{\rk}u)_{\al_1\al_2...}=\partial^{\al_0}\partial_{\al_0}u_{\al_1\al_2...}+(f(R)u)_{\al_1\al_2...},
\]
where $f(R)$ is a linear, zeroth-order tensor that depends on the Riemann curvature tensor.
\item
$\square_{\rk}*u=*\square_{\rk}u$.
\item We have
$$\square_{\rk}du=d(\square_{\rk}u),\quad \square_{\rk}\de u=\de (\square_{\rk} u).$$
The first identity is the consequence of the fact that $d^2=0$. This fact also implies $\de^2=0$ which leads to the second identity above.
\end{itemize}
\subsection{Gauge fixing}
There is an obvious gauge freedom in equation (\ref{eq:main1*}) - if $u$ is a solution of the equation then for any two-form $w$, the three-form $u+dw$ is a solution as well. Therefore, we will fix the gauge by requiring that $u$ satisfies
\bb
\label{eq:gauge}
d*u=0,
\ee
which is equivalent to 
\[
 \de u=0.
\]
This choice of gauge is similar to the Lorenz gauge of the Maxwell equations, where for a one-form $\mathfrak{a}$ one requires $d*\mathfrak{a}=0$. Since we work with 3-forms on a product manifold, the gauge is structurally more complicated. We will give a proof that equation (\ref{eq:gauge}) is a valid gauge choice in the end of this section but first we rewrite the equation (\ref{eq:main2}) using the gauge. We defined the Laplace(d'Alembert)-Beltrami operator on forms as $\square_{\rk} u=-\de d- d \de$
\footnote{We make a consistent effort to have the operator $\Delta_K$ be negative on Riemannian manifolds. With this convention
\[
 \square_{\re^{3+1}}=\der[2]{}{t}-\sum\limits_{i=1}^3 \der[2]{}{x_i}.
\]
}. On a product manifold, the operator decomposes into $\square_{\rk}=\square_{\re^{3+1}}-\Delta_{K}$,where $\Delta_{K}$ is the Laplace-Beltrami operator on $\O(K)$ (the space of differential forms).
Thus we can rewrite the main equation (\ref{eq:main1*}) as 
\bb
\square_{\rk} u= -\de du- d\de u= *(du\w du).
\ee
Therefore, the equation (\ref{eq:main1*}) with the gauge choice (\ref{eq:gauge}) becomes
\bb
\label{eq:main2}
\square_{\re^{3+1}}u-\Delta_K u= *(du\w du).
\ee
Let us now address the validity of the gauge choice.
\begin{prop} Let $F\in \Omega^4(\rk)$. Suppose there exists a solution $A\in \Omega^{3}(\rk)$ to the equation $dA=F$. Then there exists a solution to the system
\[
d\tilde{A}=F,\quad d*\tilde{A}=0.
\]
Moreover, we can choose $\tilde{A}$ such that 
\[
\text{supp}{\tilde{A}}\subseteq\{(t,x)\times K|(s,y,z)\in \text{supp} A, |x-y|\leq |t-s| \}
\]
for $t,s\in \re,x,y\in \re^3, z\in K$. 
\end{prop}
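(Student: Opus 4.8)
The plan is to look, as in the gauge-freedom remark above, for $\tilde{A}$ of the form $\tilde{A}=A+dw$ with $w$ a $2$-form on $\rk$: then $d\tilde{A}=dA=F$ automatically (since $d^2=0$), so the whole problem reduces to arranging the gauge condition $d*\tilde{A}=0$, equivalently $\de\tilde{A}=0$, equivalently $\de dw=-\de A$. I would take for $w$ the solution of the linear Cauchy problem
\[
\square_{\rk}w=\de A,\qquad w(0,\cdot)=0,\qquad w_t(0,\cdot)=\bigl(\iota_{\partial_t}A\bigr)(0,\cdot),
\]
where $\iota_{\partial_t}A$ denotes the contraction of the $3$-form $A$ with $\partial/\partial t$. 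Global existence, uniqueness, smoothness and finite propagation speed for this problem are standard: $\square_{\rk}=\square_{\re^{3+1}}-\Delta_K$ acts on the components of a form as the scalar wave operator of the globally hyperbolic manifold $\rk$ with Cauchy surface $\{0\}\times\re^{3}\times K$, modified only by the zeroth-order curvature term $f(R)$ recorded above; equivalently, one may expand $\de A$ in $\Delta_K$-eigenforms on $K$ and solve a Klein--Gordon equation on $\re^{3+1}$ mode by mode.

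The crux is that this particular $w$ is automatically co-closed, $\de w\equiv0$. Set $v:=\de w$. Applying $\de$ to the equation and using the commutation $\square_{\rk}\de=\de\square_{\rk}$ together with $\de^{2}=0$ gives $\square_{\rk}v=\de(\de A)=0$, so it suffices to show $v$ has vanishing Cauchy data and then invoke uniqueness for $\square_{\rk}$. Since $\de$ is first order, $w(0,\cdot)=0$ kills the spatial-derivative contributions to $\de w$ on $\{t=0\}$, and the remaining $\partial_t$-contribution vanishes because $w_t(0,\cdot)=(\iota_{\partial_t}A)(0,\cdot)$ has zero contraction with $\partial_t$; hence $v(0,\cdot)=0$. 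For $v_t(0,\cdot)$, which equals $(\de w_t)(0,\cdot)$ by the $t$-independence of the metric, the equation itself forces $\partial_t^{2}w(0,\cdot)=\de A(0,\cdot)$ (the spatial and curvature terms dropping out since $w(0,\cdot)=0$), and substituting this into the component formula $(\de u)_\nu=-\partial^{\mu}u_{\mu\nu}$ in normal coordinates shows that $v_t(0,\cdot)$ equals $(\iota_{\partial_t}\de A)(0,\cdot)$ minus the codifferential on the Riemannian manifold $\re^{3}\times K$ of $(\iota_{\partial_t}A)(0,\cdot)$; these two terms cancel, because $\iota_{\partial_t}\de A$ restricted to $\{t=0\}$ coincides with that very slice-codifferential of $\iota_{\partial_t}A$ --- which is exactly why $w_t(0,\cdot)$ was chosen this way. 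Thus $v\equiv0$, so $\square_{\rk}w=-\de dw$, which gives $\de dw=-\de A$ and hence $\de\tilde{A}=\de A+\de dw=0$, i.e.\ $d*\tilde{A}=0$, while $d\tilde{A}=F$ as noted.

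For the support statement, $\mathrm{supp}\,\tilde{A}\subseteq\mathrm{supp}\,A\cup\mathrm{supp}\,dw\subseteq\mathrm{supp}\,A\cup\mathrm{supp}\,w$, and $\mathrm{supp}\,A$ lies trivially in the asserted set (take $(s,y,z)=(t,x,z)$), so it remains to control $\mathrm{supp}\,w$. The data $w_t(0,\cdot)$ is supported in $\mathrm{supp}\,A\cap\{t=0\}$ and the source $\de A$ in $\mathrm{supp}\,A$, while $\square_{\rk}$ propagates at speed $\le 1$ in the $\re^{3}$ variables: the term $-\Delta_K$ contains no $x$-derivatives, so the characteristic cones are unchanged in $x$ --- transparent in the eigenmode picture, where the $\Delta_K$-eigenvalue acts as a mass and does not enlarge the unit cone. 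Hence $w(t,x,z)\ne0$ forces the existence of $(s,y,z')\in\mathrm{supp}\,A$ with $|x-y|\le|t-s|$, which is precisely the claimed inclusion.

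I expect the one genuinely delicate point to be the identification of the correct Cauchy data for $w$: with the naive choice $w_t(0,\cdot)=0$ one obtains $v(0,\cdot)=0$ but $v_t(0,\cdot)=(\iota_{\partial_t}\de A)(0,\cdot)\ne0$, so $\de w$ would not vanish. The remedy rests on the observation that this obstruction is the $\re^{3}\times K$-codifferential of the globally defined $2$-form $(\iota_{\partial_t}A)(0,\cdot)$ --- that is, it is co-exact on the slice precisely because $A$ is a globally defined potential for $F$, which is exactly the content of the hypothesis. Everything else --- the commutation identity, linear well-posedness, and the support bookkeeping --- is routine.
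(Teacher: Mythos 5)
Your proposal follows exactly the paper's strategy: look for $\tilde{A}=A+dw$, solve the linear wave equation $\square_{\rk} w=\de A$ for the correction $w$ with Cauchy data tuned so that $\de w\equiv 0$, conclude $\de\tilde{A}=0$, and invoke finite propagation speed for the support claim. This is the paper's own proof in all but notation; the paper likewise takes $w(0)=0$ and picks $\partial_t w(0)$ proportional to the time-contraction of $A$.

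However, the sign of your Cauchy datum $w_t(0,\cdot)$ is wrong, and the step ``these two terms cancel'' fails as written. With the paper's conventions $(\de u)_\nu=-\partial^\mu u_{\mu\nu}$ and signature $(+,-,\dots,-)$ (so $\partial^0=\partial_0$, $\partial^i=-\partial_i$), one has at $t=0$, using $w(0)=0$ so that $\partial_t^2 w(0)=\de A(0)$,
\[
\partial_t(\de w)_\nu(0) = -\partial_t^2 w_{0\nu}(0)+\sum_{i\ge 1}\partial_i\bigl(\partial_t w\bigr)_{i\nu}(0)
= -(\de A)_{0\nu}(0)+\sum_{i\ge 1}\partial_i\bigl(\partial_t w\bigr)_{i\nu}(0).
\]
Since $(\de A)_{0\nu}=-\sum_{i\ge 1}\partial_i A_{0i\nu}$ (because $A_{00\nu}=0$), your choice $\partial_t w(0)=\iota_{\partial_t}A(0)$, i.e.\ $(\partial_t w)_{i\nu}(0)=A_{0i\nu}(0)$, makes both terms equal to $+\sum_{i\ge1}\partial_i A_{0i\nu}(0)$: they \emph{add}, not cancel, so $\partial_t(\de w)_\nu(0)=2\sum_{i\ge1}\partial_i A_{0i\nu}(0)\neq 0$ in general. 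The identity you observe --- $\iota_{\partial_t}\de A$ at $t=0$ equals the slice codifferential of $\iota_{\partial_t}A$ --- is correct, but both quantities enter the formula for $\partial_t(\de w)(0)$ with the \emph{same} sign, not opposite signs. The fix is $w_t(0,\cdot)=-(\iota_{\partial_t}A)(0,\cdot)$, in components $\partial_t w_{\alpha_1\alpha_2}(0)=-A_{0\alpha_1\alpha_2}(0)$, which is exactly the paper's prescription; with that sign the two terms cancel and the rest of your argument, including the support bookkeeping, goes through unchanged.
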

\begin{proof}
Let $A$ be as above. Denote $\de A=e$. The two-form $e$ is the error we wish to eliminate by finding a two-form $b$ such that $\de (A+db)=0$. We solve the equation
\bb
\label{eq:corr}
-\square_{\rk} b=\de db+d\de b=e,
\ee
We have from equation (\ref{eq:corr})
\bb
\label{eq:tildeA}
 \de \tilde{A}=d\de b.
\ee
We have 
\[
 \square \de b=\de \square b=-\de e=-\de^2A=0.
\]
Thus $\de b$ solves the homogeneous wave equation. We will prove that $\de b=0$ by choosing suitable Cauchy data for $b$ at the hypersurface $t=0$. Our goal is to make the Cauchy data for $\de b$ be zero. The Cauchy data that we prescribe for $b$ in the normal coordinates are as follows: 
\bse
\bb
b_{\al_1\al_2}=0,\quad \al_1,\al_2=0..10.
\ee
\bb
 \der{}{t}b_{0\al}=0,\quad \al=1..10.
\ee
\bb
\label{eq:Cdata}
 \der{}{t}b_{\al_1\al_2}=-A_{0\al_1\al_2}=A_{\al_1 0\al_2},\quad \al_1,\al_2=0..10.
\ee
\ese
 We check the Cauchy data for $\de b$ at $t=0$.
\[
 (\de b)_{\al}|_{t=0}=-\der{}{t} b_{0\al}+\sum_{\be\neq0}\der{}{x_\be}b_{\be\al}=0.
\]
This is because the Cauchy data for $b_{0\al}$ is zero and the function $b_{\be\al}=0$ for $\al\neq 0$ at $t=0$ and so are the spatial derivatives. To see that the time derivative of $\de b$ at $t=0$ is zero, we employ the equation for $b$
\begin{align*}
\der{}{t}(\de b)_{\al}|_{t=0}=&\der[2]{b_{0\al}}{t}+\sum_{\mu\neq 0}\der{}{t}\der{}{x_\mu}b_{\mu\al}\\
=&-(\Delta_{\re^3}+\lk)b_{0\al}-(\delta A)_{0 \al}+\sum_{\mu}\der{}{t}\der{}{x_\mu}b_{\mu\nu}.
\end{align*}
Observe that $(\Delta_{\re^3}+\lk)b_{0\al}=0$ since the function at $t=0$ is zero and we take spatial derivatives and zero order term which are linear in $b$ to compute the Laplacian. The second and the third terms cancel because since $A_{00\al}=0$ by antisymmetry then
\[
 (\delta A)_{0 \al}=\sum_{\mu\neq 0}\der{}{x_\mu}A_{\mu 0\al}=\der{}{t}\sum_{\mu\neq 0}\der{}{x_\mu} b_{\mu \al},
\]
by equation (\ref{eq:Cdata}).
Therefore
\[
 (\de b)_{\al}|_{t=0}=0 
\]
and thus $\de b$ obeys a homogeneous wave equation with zero Cauchy data, which makes it identically zero. Therefore
\[
 \de \tilde{A}=\de (A+db)=d\de b=0.
\]
Observe that the support of the Cauchy data for $b$ is contained in the support for $A$ and thus the statement on the support follows from finite speed of propagation.
\end{proof}
We now wish to prove that the gauge condition $\de u=0$ persists for the equation (\ref{eq:main2}). For that we need to discuss the initial conditions. Since the equation is of second order, the natural Cauchy data is $u|_{t=0}$ and $\der{}{t}u|_{t=0}$. If we express (\ref{eq:main1*}) through the field strength $F=du$ we have
\bb
\label{eq:mainf}
 \de F=-*(F\w F).
\ee
The natural initial condition for this first order equation is $F|_{t=0}$ but we first need to observe that there is a certain compatibility condition in (\ref{eq:mainf}), which is not of the evolution form . For that we recall the notion of the interior product of a form by a vector field. Let $\al$ be an $n$-form and $X$ be a vector field, then the interior product of $\al$ by $X$, denoted by $\al\rfloor_X$ is an $n-1$ form defined by
\[
 \al\rfloor_X(X_1,X_2,..,X_{n-1})=\al(X,X_1,X_2,..X_{n-1}).
\]
We will be interested in interior products by $\der{}{t}=\der{}{x_0}$. Such a construction
can be simply described as freezing the first index of the form $\al$ to be the zero (i.e. time) index. Thus in coordinates
\[
 (\al\rfloor_{\der{}{t}})_{a_1a_2...}=\al_{0a_1a_2...}.
\]
With this notation we prove the following observation
\begin{claim}
 The form $(\de F)\rfloor_{\der{}{t}}$ does not contain the time derivatives of $F$.
\end{claim}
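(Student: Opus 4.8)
The plan is to reduce the statement to the coordinate formula for $\de$ recorded in Subsection~\ref{sbs:*} and then to use the antisymmetry of $F$. Since the assertion is local, I would fix an arbitrary point of $\rk$ and work in coordinates $x^0,\ldots,x^{10}$ adapted to it, with $x^0$ the time coordinate, $x^1,x^2,x^3$ the Euclidean coordinates on $\re^3$, and $x^4,\ldots,x^{10}$ normal coordinates on $K$ centered at the given point. In such coordinates the metric is block-diagonal Minkowski$\,\oplus\,$identity at the center, the Christoffel symbols vanish there, and hence the covariant divergence agrees with the coordinate one, so that
\[
 (\de F)_{\al_1\al_2\al_3}=-\partial^{\al_0}F_{\al_0\al_1\al_2\al_3}
\]
holds at that point with the sum over $\al_0=0,\ldots,10$ and no zeroth-order curvature correction.

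Next I would apply the interior product, using its description as freezing the first index to the time index:
\[
 \bigl((\de F)\rt\bigr)_{\al_1\al_2}=(\de F)_{0\al_1\al_2}=-\sum_{\al_0=0}^{10}\partial^{\al_0}F_{\al_0 0\al_1\al_2}.
\]
The only summand that could involve $\partial_0=\der{}{t}$ is the one with $\al_0=0$, namely $-\partial^{0}F_{00\al_1\al_2}$; but $F_{00\al_1\al_2}=0$ by the antisymmetry of $F$, so this term drops out identically. For each remaining index $\al_0\in\{1,\ldots,10\}$ the raised derivative $\partial^{\al_0}$ is, in these coordinates (where the inverse metric is diagonal at the base point), proportional to $\partial_{\al_0}$, i.e.\ a derivative in one of the variables $x^1,x^2,x^3$ or $x^4,\ldots,x^{10}$ — never in $x^0$. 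Thus $\bigl((\de F)\rt\bigr)_{\al_1\al_2}$ is expressed purely through the spatial $\re^3$-derivatives and the $K$-derivatives of the components of $F$, and since the base point was arbitrary the claim follows.

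The computation is short; the only points demanding care are the role of the antisymmetry of $F$, which is precisely what eliminates the would-be $\partial_t F$ contribution, and the choice of normal coordinates, which ensures that raising the contracted index does not mix $\partial_0$ into the spatial directions. Conceptually this is the statement that the $\rt$-component of the equation $\de F=-*(F\w F)$ is a constraint on the Cauchy datum $F|_{t=0}$ rather than part of the evolution, which is the fact we will invoke when setting up the initial value problem for \eqref{eq:mainf}.
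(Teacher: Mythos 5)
Your proof is correct and follows essentially the same route as the paper: work in normal coordinates, invoke the coordinate expression $(\de F)_{\al_1\al_2\al_3}=-\partial^{\al_0}F_{\al_0\al_1\al_2\al_3}$, identify the $\al_0=0$ term as the only one carrying $\partial_t$, and observe that it vanishes because $F_{00\al_1\al_2}=0$ by antisymmetry. The only cosmetic difference is that you keep the raised index and note that the diagonal metric at the base point prevents mixing of $\partial_0$ into the spatial directions, whereas the paper writes the sum out with lowered indices and explicit signs; the substance is the same.
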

\begin{proof} We will give the proof in normal coordinates. We have
\[
 [(\de F)\rfloor_{\der{}{t}}]_{a_1a_2}=(\de F)_{0a_1a_2}=\der{}{x_0}F_{00a_1a_2}-\sum\limits_{i=1}^{10} \der{}{x_i}F_{i0a_1a_2}.
\]
Only the first term contains the time derivative but $F_{00a_1a_2}=0$ due to antisymmetry of $F$.
\end{proof}
Thus applying $\rfloor_{\der{}{t}}$ to (\ref{eq:mainf}) and restricting it to time $t=0$ we see that both sides of the equality
\[
 (\de F)\rfloor_{\der{}{t}}|_{t=0}=-[*(F\w F)]\rfloor_{\der{}{t}}|_{t=0}
\]
depend only on $F|_{t=0}$ so they are functions of a gauge invariant part of the Cauchy data and express a compatibility condition, which must hold in both gauge-invariant and gauge-fixed versions of the equations. We thus make the following definition.
\begin{defin} The form $du|_{t=0}$ is compatible if 
\[
 (\de du)\rfloor_{\der{}{t}}|_{t=0}=-*(du\w du)\rfloor_{\der{}{t}}|_{t=0}.
\]
\end{defin}
We now can prove that the gauge condition $\de u=0$ persists in the equation for the compatible Cauchy data.
\begin{prop}
 Let $u$ solve
\[
 \square_{\rk}u=*(du\w du),
\]
such that $\de u|_{t=0}=0$ and $du|_{t=0}$ is compatible. Then $\de u=0$ for all times.
\end{prop}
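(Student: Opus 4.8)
The plan is to set $w=\de u$ and to show that $w$ solves the \emph{homogeneous} equation $\square_{\rk}w=0$ with vanishing Cauchy data, so that $w\equiv 0$ by uniqueness for the wave equation. That $\square_{\rk}w=0$ follows from two things recorded in Section~\ref{sbs:*}: the commutation identity $\square_{\rk}\de u=\de(\square_{\rk}u)$, and the fact that the nonlinearity is $\de$-closed. For the latter, since $du\w du$ is an $8$-form, $\de\big(*(du\w du)\big)$ equals $*\,d(du\w du)$ up to sign (by $**=\pm\,\mathrm{id}$ and the definition $\de=(-1)^{\deg}*d*$), and $d(du\w du)=d(du)\w du+du\w d(du)=0$ because $d^{2}=0$. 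Hence $\square_{\rk}w=\de\big(*(du\w du)\big)=0$, and the whole problem reduces to the Cauchy data of $w$.

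The datum $w|_{t=0}=\de u|_{t=0}=0$ is assumed, so the real work is to prove $\partial_{t}w|_{t=0}=0$, and here I would split the components of the $2$-form $w$ according to whether a time index is present. For the components $w_{0a}$ I use that $d^{2}=0$ forces $\de^{2}=0$, so $\de w=\de\de u=0$ identically; reading this $1$-form identity in normal coordinates gives $-\partial_{t}w_{0a}+\sum_{k\ge 1}\partial_{x_{k}}w_{ka}=0$, and restricting to $t=0$ the remaining sum is a purely spatial derivative of $w|_{t=0}=0$, so $\partial_{t}w_{0a}|_{t=0}=0$ for every $a$.

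For the remaining components $w_{ab}$ with $a,b$ spatial I use the equation itself, rewritten (via $\square_{\rk}=-d\de-\de d$) as $d\de u=-\de du-*(du\w du)$. Applying $\rfloor_{\der{}{t}}$ to both sides and restricting to $t=0$, the right-hand side becomes $-(\de du)\rt|_{t=0}-*(du\w du)\rt|_{t=0}$, which vanishes \emph{precisely because $du|_{t=0}$ is compatible}. On the left-hand side, the component $(dw)_{0ab}=\partial_{t}w_{ab}-\partial_{x_{a}}w_{0b}+\partial_{x_{b}}w_{0a}$ reduces at $t=0$ to $\partial_{t}w_{ab}|_{t=0}$, since the other two terms are spatial derivatives of $w|_{t=0}=0$. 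Thus $\partial_{t}w_{ab}|_{t=0}=0$ as well, and all components of $\partial_{t}w|_{t=0}$ vanish. Finally, $w$ satisfies $\square_{\rk}w=0$ with $w|_{t=0}=\partial_{t}w|_{t=0}=0$; since the data of $u$ are supported in a ball of $\re^{3}$ over each point of $K$, finite speed of propagation keeps $w(t)$ spatially compactly supported, and uniqueness for the second-order hyperbolic operator $\square_{\rk}$ on $\rk$ (equivalently, decomposing $w$ into eigenmodes of $\lk$ reduces it to Klein--Gordon equations on $\re^{3+1}$ with zero data) gives $w\equiv 0$, i.e. $\de u=0$ for all times.

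I expect the one genuinely delicate point to be the vanishing of $\partial_{t}w|_{t=0}$: the hypothesis $\de u|_{t=0}=0$ by itself says nothing about time derivatives, and the key realization is that the time-indexed components are forced to have vanishing time derivative by the purely algebraic identity $\de^{2}=0$, whereas the spatial components genuinely need the separate compatibility hypothesis, fed in through the $\rfloor_{\der{}{t}}$-projection of the equation. Everything else — the two commutations, the Leibniz computation showing $d(du\w du)=0$, and the concluding uniqueness statement — is routine.
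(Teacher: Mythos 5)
Your proposal is correct and follows essentially the same route as the paper: show $\square_{\rk}\de u=0$ via the commutation $\square_{\rk}\de=\de\square_{\rk}$ and $d^2=0$, then verify the Cauchy data of $\de u$ vanishes by treating the time-indexed components with the algebraic identity $\de^2 u=0$ and the purely spatial components with the $\rfloor_{\der{}{t}}$-projection of the equation plus the compatibility hypothesis. The only cosmetic difference is the order in which you treat the two types of components.
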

\begin{proof} We apply $\square_{\rk}$ to $\de u$ to see that
\begin{align*}
 \square_{\rk}(\de u)&=\de (\square_{\rk}) u=\de *(du\w du)=*d**(du\w du) \\
&=*d(du\w du)=2*(d^2u\w du)=0.
\end{align*}
We check the Cauchy data: $\de u|_{t=0}=0$ by assumption. The term $\der{}{t}\de u|_{t=0}$ vanishes because of the equation and the compatibility condition. We prove that in normal coordinates. We have
\bb
 [(d\de u)\rt]_{ab}=(d \de u)_{0ab}=\der{}{t}(\de u)_{ab}-\der{}{x_a}(\de u)_{0b}+\der{}{x_b}(\de u)_{0a}.
\ee
If $a,b\neq 0$ then the last two terms above are spatial derivatives of $\de u$ which is zero when we compute at $t=0$. Therefore, for $a,b\neq 0$
\begin{align*}
 \der{}{t}(\de u)_{ab}|_{t=0}&=[(d\de u)\rt]_{ab}|_{t=0}\\
&=[(d\de u)\rt]_{ab}|_{t=0}=[(-\square u -\de du)\rfloor_{\der{}{t}}]_{ab}|_{t=0}=\\
& =\lbrace[-*(du\w du) -\de du)]\rfloor_{\der{}{t}}\rbrace_{ab}|_{t=0}=0, 
\end{align*}
where we applied the compatibility condition to conclude the last equality.
Next we assume without loss of generality that $a=0,b\neq 0$ then since $\de^2 u=0$ we have
\[
 \der{}{t}(\de u)_{0b}=\de^2 u+\sum_{i\neq 0}\der{}{x_i}(\de u)_{ib}=\sum_{i\neq 0}\der{}{x_i}(\de u)_{ib}.
\]
This vanishes because it is a sum of spatial derivatives of components of $\de u$ which vanish at $t=0$.
\end{proof}
\begin{corr}
 Let $u$ solve the equation 
\[
 \square_{\rk} u=*(du\w du)
\]
with $\de u|_{t=0}=0$ and compatible $du|_{t=0}$. Then $u$ solves
\[
 \de du=-*(du\w du).
\]
\end{corr}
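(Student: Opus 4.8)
The plan is to read this off from the two propositions just proved together with the definition of the form d'Alembertian. First I would apply the preceding proposition: by hypothesis $u$ solves $\square_{\rk}u=*(du\w du)$ with $\de u|_{t=0}=0$ and $du|_{t=0}$ compatible, which is precisely the set of assumptions under which that proposition concludes $\de u=0$ for all times. So the gauge condition, imposed only at $t=0$, is available globally.

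Next I would unpack $\square_{\rk}u=-\de d u-d\de u$. Since $\de u\equiv 0$, the term $d\de u$ vanishes identically, leaving $\square_{\rk}u=-\de du$. Substituting this into the equation satisfied by $u$ yields $-\de du=\square_{\rk}u=*(du\w du)$, and rearranging gives $\de du=-*(du\w du)$, which is exactly (\ref{eq:main1*}). That completes the argument.

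I do not expect any genuine obstacle in this corollary itself: it is a two-line bookkeeping consequence. The substantive point — that the gauge constraint $\de u=0$ is preserved by the evolution, so that a solution of the fixed-gauge equation (\ref{eq:main2}) actually solves the original equation (\ref{eq:main1*}) — was the content of the previous proposition, where the compatibility condition on $du|_{t=0}$ was used to kill the Cauchy data of $\de u$ and force it to satisfy the homogeneous wave equation with zero data. The corollary merely packages that fact in the form most convenient for the rest of the paper.
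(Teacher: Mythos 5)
Your argument is correct and is exactly the reasoning the paper intends; the corollary is stated without a written proof precisely because it follows from the preceding proposition together with the identity $\square_{\rk}=-\de d-d\de$ in the way you describe.
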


\section{Review of Hodge theory}
\label{sc:hodge}
The objects of our study are 3-forms on $\rk$. The basic example of such a form would be $u_\pa\w u_\pe$ where $u_\pe$ is a $k$-form (for $k=0,1,2,3$) and $u_\pa$ is a $3-k$ form on $\re^{3+1}$. The action of the Hodge-Laplacian of K is clearly $\lk (u_\pa\w u_\pe)=u_\pa\w (\lk u_\pe)$ and it extends through density on all the forms on $\rk$. Moreover, if we use the eigenvectors of $\lk$, $\lk e_\l=-\l^2 e_\l$, we can further decompose any form on $\rk$  as a series $u(x,y)=\sum_{\l}u_{\l}(x)e_{\lambda}(y)$, where $x$ is a variable on $\re^{3+1}$ and $y$ is the variable on $K$. Thus, we envision the equation being the system of equations on differential forms on $\re^{3+1}$ which are indexed by $\lambda$, in which case the equation will become
\[
\square u_{\l} -\l^2 u_\l= \sum\limits_{\l',\l''}{\mathcal{B}_\l^{\l',\l''}(u_{\l'}, u_{\l''})},
\]
where $\mathcal{B}$'s are bilinear differential operators. Thus we see that $u_\l$ for $\l=0$ evolve under a non-linear wave equation, while $u_\l$ for $\l\neq 0$ evolve under a non-linear Klein-Gordon equation. This analysis follows the ideas of Metcalfe, Sogge and Stewart \cite{MSS} and Metcalfe and Stewart \cite{MS2}, who analyze the wave equation on $\re^{n+1}\times D$, where $D$ is a bounded domain in $\re^m$ with various boundary conditions. Their analysis splits the function to eigenfunctions of the Laplacian on $D$ with appropriate boundary conditions.
\par In this section, we recall some properties of the eigenvectors of $\lk$ which we need for the proof. The material is taken from textbooks, \cite[section 2.1]{Jost}
and \cite[section 5.8]{Taylor}. For the rest of this section we will deal only with forms on $K$. We will continue to employ the subscript $\pe$ to maintain consistency. We begin with the following facts.
\begin{prop}
 \begin{enumerate}
  \item The operator $\lk$ is a differential operator acting on the space of forms $\bigoplus\limits_{i=0}^7 \O^i(K)$ with the principal symbol $g_{ij}\xi^i\xi^j \text{Id}$, where $g$ is the Riemannian metric.
\item The operator $\lk$ has a self-adjoint nonpositive-definite extension to the space of $L^2$-valued forms on $K$.
 \end{enumerate}
\end{prop}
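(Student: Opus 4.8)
The plan is to treat the two assertions in turn; both are classical facts of Hodge theory (contained in the sources \cite[section 2.1]{Jost} and \cite[section 5.8]{Taylor} cited above), so the task is mainly to assemble the standard ingredients rather than to produce a new idea.

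For the first assertion I would start from $\lk=-(\de d+d\de)$ and note that $d\colon\O^i(K)\to\O^{i+1}(K)$ and $\de\colon\O^i(K)\to\O^{i-1}(K)$ are first-order differential operators, so $\lk$ is a differential operator of order at most two on $\bigoplus_{i=0}^7\O^i(K)$. To identify the principal symbol I would recall that the symbols of $d$ and $\de$ at a covector $\xi$ are the purely algebraic maps $\omega\mapsto\xi\w\omega$ and $\omega\mapsto-\iota_{\xi^\sharp}\omega$, where $\xi^\sharp$ is the metric dual of $\xi$ and $\iota$ the interior product; composing and using the derivation/antiderivation identity $\iota_{\xi^\sharp}(\xi\w\omega)+\xi\w\iota_{\xi^\sharp}\omega=g(\xi,\xi)\,\omega$ gives immediately that the principal symbol of $\de d+d\de$ is $g_{ij}\xi^i\xi^j\,\mathrm{Id}$ on every $\O^i(K)$, hence that of $\lk$ is the same up to the overall sign in its definition; in particular $\lk$ is elliptic. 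An equivalent route is to quote the Weitzenb\"ock formula $\de d+d\de=\nabla^*\nabla+\mathcal{R}$, with $\nabla^*\nabla$ the connection Laplacian, whose principal symbol is visibly $g_{ij}\xi^i\xi^j\,\mathrm{Id}$, and $\mathcal{R}$ a zeroth-order curvature endomorphism.

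For the second assertion the essential point is that $K$ is compact \emph{without boundary}, so integration by parts leaves no boundary terms and $\de$ is the genuine formal $L^2$-adjoint of $d$ on smooth forms, i.e.
\[
 \int_K g(d\alpha,\beta)\,dv_K=\int_K g(\alpha,\de\beta)\,dv_K .
\]
Consequently $\de d+d\de$ is symmetric and, pairing it with $\omega$ itself,
\[
 \int_K g\bigl((\de d+d\de)\omega,\omega\bigr)\,dv_K=\|d\omega\|_{L^2}^2+\|\de\omega\|_{L^2}^2\geq 0 ,
\]
so it is a nonnegative symmetric operator on $C^\infty$. To promote this to a self-adjoint extension I would pass to the closed nonnegative quadratic form $Q(\omega)=\|d\omega\|_{L^2}^2+\|\de\omega\|_{L^2}^2$, defined on the completion of the smooth forms in the norm $(Q(\omega)+\|\omega\|_{L^2}^2)^{1/2}$, and take the associated Friedrichs extension, which is a self-adjoint nonnegative operator on $L^2(\bigoplus_{i=0}^7\O^i(K))$ agreeing with $\de d+d\de$ on smooth forms; negating it gives the required self-adjoint nonpositive-definite extension of $\lk$. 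Ellipticity from the first part strengthens the picture: $\lk$ is in fact essentially self-adjoint on $C^\infty$, the domain of its closure is the Sobolev space $H^2$ of forms, and by the Rellich lemma the resolvent is compact, so the spectrum is discrete --- which is exactly what licenses the eigen-expansion $u=\sum_\l u_\l e_\l$ of Section~\ref{sc:hodge}.

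I do not expect a genuine obstacle: once ellipticity and formal self-adjointness are in hand, everything is the standard spectral theory of elliptic operators on a closed manifold, and the two points that need attention --- the vanishing of boundary terms (automatic since $\partial K=\emptyset$) and the selection of the correct self-adjoint extension --- are routine. The one mildly fussy item is tracking signs and index placements so that the stated principal symbol $g_{ij}\xi^i\xi^j\,\mathrm{Id}$ is consistent with the convention $\lk=-(\de d+d\de)$ used in the paper, which amounts to following a single minus sign through the symbol computation.
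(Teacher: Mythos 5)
The paper itself does not prove this proposition; it is stated as recalled background with a pointer to \cite[section 2.1]{Jost} and \cite[section 5.8]{Taylor}, and the subsequent claims (finite-dimensionality of $\P0 L^2(K)$, elliptic regularity) are likewise cited. So there is no in-paper argument to compare against, and what you have produced is a correct expansion of exactly the textbook material the author is invoking: symbol calculus for $d$ and $\de$ to establish ellipticity, vanishing of boundary terms on the closed manifold $K$ to get formal self-adjointness and nonnegativity of $\de d+d\de$, and the Friedrichs extension (or, as you note, essential self-adjointness via ellipticity) to produce the self-adjoint nonpositive extension of $\lk=-(\de d+d\de)$. The Weitzenb\"ock alternative you mention is a perfectly good shortcut for the symbol, and your closing remark about compact resolvent and discrete spectrum is precisely what the paper needs for its eigen-expansion $u=\sum_\l u_\l e_\l$.

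One small bookkeeping issue worth fixing: with the symbols you write down, $\sigma(d)(\xi)=\xi\wedge\cdot$ and $\sigma(\de)(\xi)=-\iota_{\xi^\sharp}$, the composition via the identity $\iota_{\xi^\sharp}(\xi\wedge\omega)+\xi\wedge\iota_{\xi^\sharp}\omega=g(\xi,\xi)\omega$ gives $\sigma(\de d+d\de)(\xi)=-g(\xi,\xi)\,\mathrm{Id}$, not $+g(\xi,\xi)\,\mathrm{Id}$ as stated in your text; the minus sign of the paper's convention $\lk=-(\de d+d\de)$ then flips it back to the $+g_{ij}\xi^i\xi^j\,\mathrm{Id}$ asserted in the proposition. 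You flag the sign chase at the end, and none of the structural conclusions (ellipticity, symmetry, sign of the quadratic form) depend on it, but as written the intermediate sentence is inconsistent with the symbol of $\de$ you chose. Everything else is sound.
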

Denote $\P0=\chi_{\{ 0\}}(-\lk),\P0=\chi_{\{\l:\l> 0\}}(-\lk)$. These are spectral projections on the zero-,non-zero subspace of the spectrum of $-\lk$, respectively.
\subsection{Hodge Theory} The range of $\P0$, i.e all the forms $\o$ that satisfy $\lk \o=0$ are called the harmonic forms.
We have the following simple fact.
\begin{claim}
\label{cl:har}
 \[
  d_{\pe}\P0=0.
 \]
\end{claim}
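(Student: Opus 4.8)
The plan is to invoke the standard integration-by-parts identity of Hodge theory on the closed Riemannian manifold $K$. Recall that on $K$ the form Laplacian decomposes as $\lk = -(\dpe\de_\pe + \de_\pe\dpe)$, where $\de_\pe$ is the codifferential of $K$ built from the Riemannian Hodge star $*_K$; since $K$ is compact without boundary, $\de_\pe$ is the genuine $L^2(\O^\bullet(K))$-adjoint of $\dpe$, with no boundary term in the adjunction.

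First I would identify the range of $\P0$ with the space of harmonic forms, i.e. the kernel of $\lk$. This is immediate from the definition $\P0 = \chi_{\{0\}}(-\lk)$ together with the preceding proposition: $-\lk$ is elliptic of order two on the compact manifold $K$, hence nonnegative self-adjoint with discrete spectrum, so $0$ (if present) is an isolated eigenvalue and $\P0$ is the orthogonal projection onto its finite-dimensional eigenspace. Elliptic regularity guarantees that every element of the range of $\P0$ is a smooth form, which legitimizes the integration by parts in the next step.

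Second, for any $\o$ in the range of $\P0$ I would compute, using $\lk\o = 0$ and the adjunction $\langle \dpe\,\cdot\,,\,\cdot\,\rangle = \langle\,\cdot\,,\de_\pe\,\cdot\,\rangle$ on $K$,
\[
0 = -\langle \lk\o,\o\rangle_{L^2(K)} = \langle(\dpe\de_\pe + \de_\pe\dpe)\o,\o\rangle_{L^2(K)} = \|\de_\pe\o\|_{L^2(K)}^2 + \|\dpe\o\|_{L^2(K)}^2 .
\]
Both summands being nonnegative, this forces $\dpe\o = 0$ (and also $\de_\pe\o = 0$, a fact that will be convenient later). Since $\o$ was an arbitrary element of the range of $\P0$, we conclude $\dpe\P0 = 0$.

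The only point that requires care --- and it is the main obstacle only in a bookkeeping sense --- is the consistency of sign conventions: one must use the convention of Section \ref{sbs:*} that $\lk$ is nonpositive, so that $-\langle\lk\o,\o\rangle$ expands as the sum of the two squares above rather than their difference, and one must use that $K$ has empty boundary so that the adjunction between $\dpe$ and $\de_\pe$ carries no boundary contribution. Both are built into the standing hypotheses, so the argument goes through verbatim; alternatively the statement is exactly the classical fact that a harmonic form on a closed Riemannian manifold is closed and co-closed (see \cite[section 2.1]{Jost}, \cite[section 5.8]{Taylor}).
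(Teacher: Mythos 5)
Your proof is correct and follows essentially the same route as the paper: identify the range of $\P0$ with the harmonic forms, write $-\lk = \dpe^*\dpe + \dpe\dpe^*$, and pair $\lk\o$ against $\o$ in $L^2(K)$ to obtain $0 = \|\dpe\o\|_{L^2(K)}^2 + \|\dpe^*\o\|_{L^2(K)}^2$, whence $\dpe\o=0$. The extra remarks on smoothness of harmonic forms and the sign convention are sound but not needed beyond what the paper already sets up.
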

\begin{proof}
Let $\de_K=d_\pe^*$ be the $L^2(K)$ adjoint of $d_\pe$ 
 We have the following characterization of $\lk$ (see \cite[Definition 2.1.2]{Jost}\footnote{Our definition is the negative of \cite{Jost}})
\[
 -\lk= \de_K d_\pe+d_\pe \de_K=d_\pe ^* d_\pe +d_\pe d_\pe^*.
\]
Therefore for $\o=\P0 \o$, we have $\lk \o=0$. Thus
\begin{align*}
 0= -\langle \lk \o, \o \rangle_{L^2(K)}&= \langle d_\pe ^* d_\pe \o +d_\pe d_\pe^* \o, \o \rangle_{L^2(K)}\\
&=\| d_\pe \o\|_{L^2(K)}^2+\| d_\pe^* \o\|_{L^2(K)}^2,
\end{align*}
where $L^2(K)$ is the space of $L^2$ valued differential forms on $K$.
\end{proof}
The full version of this claim can be found in \cite[Proposition 2.1.5]{Jost}. It is the basis of Hodge theory in algebraic topology. We will not require any of it in this paper but we will quote the following theorem for the sake of beauty.
\begin{theorem}
 Every non-empty de-Rham cohomology class of $K$ contains precisely one harmonic form.
\end{theorem}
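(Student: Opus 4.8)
The plan is to obtain the statement as a corollary of the Hodge decomposition of $\O^i(K)$, which itself rests on the elliptic theory for $\sql$ recorded in the first Proposition of this section. As a preliminary, recall from the computation in the proof of Claim \ref{cl:har} that for any form $\o$ on $K$
\[
-\langle \lk\o,\o\rangle_{L^2(K)}=\|d_\pe\o\|_{L^2(K)}^2+\|\de_K\o\|_{L^2(K)}^2,
\]
so $\o$ is harmonic (that is, $\lk\o=0$) precisely when it is simultaneously closed, $d_\pe\o=0$, and coclosed, $\de_K\o=0$. In particular a harmonic form is closed and therefore determines a de Rham class, so the theorem asserts that the resulting map from $\ker\lk$ to the de Rham cohomology of $K$ in the corresponding degree is a bijection.

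The heart of the argument --- and the step I expect to be the main obstacle, albeit a completely standard one found in \cite[section 2.1]{Jost} or \cite[section 5.8]{Taylor} --- is the orthogonal Hodge decomposition
\[
\O^i(K)=\mathcal{H}^i\oplus d_\pe\O^{i-1}(K)\oplus \de_K\O^{i+1}(K),\qquad \mathcal{H}^i:=\ker\lk .
\]
Since $\sql$ is a self-adjoint, nonnegative, second order elliptic operator on the closed manifold $K$ (first Proposition of this section), the Fredholm theory for elliptic operators gives that $\mathcal{H}^i$ is finite dimensional, that $\lk$ has closed range in $L^2$, and --- via elliptic regularity --- that on smooth forms $\O^i(K)=\mathcal{H}^i\oplus \lk\,\O^i(K)$, $L^2$-orthogonally. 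Using $\lk=-(d_\pe\de_K+\de_K d_\pe)$ one has $\lk\,\O^i(K)\subseteq d_\pe\O^{i-1}(K)+\de_K\O^{i+1}(K)$, while the relations $d_\pe^2=\de_K^2=0$ together with the adjointness $\langle d_\pe\,\cdot\,,\cdot\,\rangle_{L^2(K)}=\langle\,\cdot\,,\de_K\,\cdot\,\rangle_{L^2(K)}$ show both that $d_\pe\O^{i-1}(K)$ and $\de_K\O^{i+1}(K)$ are mutually orthogonal and each orthogonal to $\mathcal{H}^i$; combining these identifications yields the displayed splitting.

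Granting the decomposition, existence is immediate: given a closed $\o\in\O^i(K)$, write $\o=h+d_\pe\al+\de_K\be$ with $h$ harmonic, apply $d_\pe$ (so that $d_\pe h=0$ and $d_\pe^2\al=0$) to get $0=d_\pe\de_K\be$, hence $\|\de_K\be\|_{L^2(K)}^2=\langle d_\pe\de_K\be,\be\rangle_{L^2(K)}=0$ and $\de_K\be=0$; thus $\o=h+d_\pe\al$ is cohomologous to the harmonic form $h$. For uniqueness, if $h_1,h_2$ are harmonic and $h_1-h_2=d_\pe\eta$, then $h_1-h_2$ is coclosed, so $\|h_1-h_2\|_{L^2(K)}^2=\langle h_1-h_2,d_\pe\eta\rangle_{L^2(K)}=\langle\de_K(h_1-h_2),\eta\rangle_{L^2(K)}=0$, forcing $h_1=h_2$. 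Since a non-empty de Rham class is exactly a coset of $d_\pe\O^{i-1}(K)$ inside the closed $i$-forms, these two facts say that it meets $\mathcal{H}^i$ in one and only one point, which is the assertion.
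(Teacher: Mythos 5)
Your argument is correct and is the standard Hodge-theoretic proof. Note, however, that the paper does not actually prove this theorem: it simply cites \cite[Theorem 2.2.1]{Jost} and moves on (the statement is included, in the author's words, ``for the sake of beauty'' and is not used in the rest of the paper). Your write-up is essentially the proof that appears in that reference --- establish the orthogonal decomposition $\O^i(K)=\mathcal{H}^i\oplus d_\pe\O^{i-1}(K)\oplus \de_K\O^{i+1}(K)$ from elliptic/Fredholm theory for $\lk$, then read off existence (a closed form has vanishing $\de_K\be$-component, so its class is represented by the harmonic piece) and uniqueness (an exact harmonic form is orthogonal to itself). The individual steps --- the characterization of harmonic as closed-and-coclosed via $-\langle\lk\o,\o\rangle=\|d_\pe\o\|^2+\|\de_K\o\|^2$, the pairing arguments killing $\de_K\be$ and $h_1-h_2$ --- are all sound. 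The only place you compress heavily is the passage from ``$\lk$ is elliptic, self-adjoint, nonnegative on the closed manifold $K$'' to the $L^2$-orthogonal splitting $\O^i(K)=\mathcal{H}^i\oplus\lk\,\O^i(K)$ with smooth summands; this does require the full Fredholm alternative for elliptic operators plus elliptic regularity to keep everything smooth, which you flag but do not carry out. That is appropriate here, since those facts are exactly what the paper's own Proposition in this section and the cited parts of \cite{Taylor} are supplying.
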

See \cite[Theorem 2.2.1]{Jost} for the proof. Thus existence and properties of harmonic forms are connected to the topological properties of the manifold. For instance the sphere $\mathbb{S}^7$ will have only two harmonic forms - the constant 0-form and the volume 7-form. The torus $\mathbb{T}^7$ will have $\binom{7}{n}$ linearly independent harmonic $n$-forms. Observe that both of these statements are independent of the choice of the Riemannian metric.
\subsection{Elliptic regularity for $\lk$}
We recall some basic regularity results for the form Laplacian. We have the following estimates
\begin{claim} Let $\o$ be a form on $K$ then
 \[
\|\o\|_{H^2(K)}\leq C (\|\Delta_K \o\|_{L^2(K)}+ \|\o\|_{L^2(K)}).
\]
\end{claim}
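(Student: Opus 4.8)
My plan is to prove this as the classical a priori elliptic estimate for a second-order elliptic operator on a compact boundaryless manifold, by localizing to coordinate charts and invoking the constant-coefficient estimate on $\re^7$; below I indicate the steps rather than grind through them.

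First I would fix a finite atlas $\{(U_\alpha,\phi_\alpha)\}_\alpha$ of $K$ with each $\phi_\alpha(U_\alpha)$ a bounded open subset of $\re^7$, together with a smooth partition of unity $\{\chi_\alpha\}$ subordinate to it, and write $\o=\sum_\alpha\chi_\alpha\o$. Expressing each $\chi_\alpha\o$ in the coordinate coframe $dx^{i_1}\w\cdots\w dx^{i_k}$ of the chart turns its components into compactly supported functions on $\re^7$; and since the transition maps are smooth and the metric coefficients are smooth and bounded on the relatively compact supports involved, the norm $\|\o\|_{H^s(K)}$ is comparable, with constants depending only on the atlas and the metric of $K$, to $\sum_\alpha\|\chi_\alpha\o\|_{H^s(\re^7)}$ taken componentwise. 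So it suffices to bound each $\|\chi_\alpha\o\|_{H^2(\re^7)}$.

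In a fixed chart, by the principal-symbol computation recalled earlier $\lk$ has the form $P=-g^{ij}(x)\partial_i\partial_j\,\mathrm{Id}+B^k(x)\partial_k+C(x)$, with $g^{ij}$ smooth, symmetric and uniformly positive definite on the relevant compact set, and $B^k,C$ smooth matrix-valued coefficients that may couple the components at lower order. Applying the standard variable-coefficient a priori estimate — obtained from the Fourier-side bound $\|v\|_{H^2(\re^7)}\ls\|g^{ij}(x_0)\partial_i\partial_j v\|_{L^2}+\|v\|_{L^2}$ by freezing $g^{ij}$ at a point, controlling its oscillation on small balls, and an auxiliary small-scale partition of unity, with the terms $B^k\partial_k+C$ absorbed into the $\|v\|_{H^1}$ error — gives, componentwise for $v:=\chi_\alpha\o$,
\[
\|\chi_\alpha\o\|_{H^2(\re^7)}\ls\|P(\chi_\alpha\o)\|_{L^2(\re^7)}+\|\chi_\alpha\o\|_{H^1(\re^7)}.
\]
Since $P(\chi_\alpha\o)=\chi_\alpha\,\lk\o+[P,\chi_\alpha]\o$ with $[P,\chi_\alpha]$ first order and smoothly compactly supported, $\|P(\chi_\alpha\o)\|_{L^2}\ls\|\lk\o\|_{L^2(K)}+\|\o\|_{H^1(K)}$. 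Summing over $\alpha$ then yields $\|\o\|_{H^2(K)}\ls\|\lk\o\|_{L^2(K)}+\|\o\|_{H^1(K)}$, and the interpolation inequality $\|\o\|_{H^1(K)}\le\varepsilon\|\o\|_{H^2(K)}+C_\varepsilon\|\o\|_{L^2(K)}$ on the compact $K$ lets me absorb the $H^2$ term on the right for $\varepsilon$ small, giving the claim.

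The only genuinely non-bookkeeping ingredient is the single-chart variable-coefficient elliptic estimate, which is classical — it is exactly what is quoted from \cite[section 5.8]{Taylor} and \cite[section 2.1]{Jost} — so the real content of the argument is the localization and the absorption of lower-order terms by interpolation. A coordinate-free alternative is the Weitzenböck identity $\lk=-\nabla^*\nabla+\mathcal{R}$ with $\mathcal{R}$ a smooth zeroth-order curvature endomorphism: pairing with $\o$ controls $\|\nabla\o\|_{L^2}$, a second integration by parts controls $\|\nabla^2\o\|_{L^2}$ by $\|\lk\o\|_{L^2}+\|\o\|_{L^2}$ modulo lower-order terms, and since $\|\o\|_{H^2(K)}\sim\|\nabla^2\o\|_{L^2}+\|\nabla\o\|_{L^2}+\|\o\|_{L^2}$ on the compact $K$, absorbing as before gives the same conclusion.
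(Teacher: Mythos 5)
Your argument is correct and is the standard proof of the elliptic a priori estimate; the paper does not actually prove this claim, it simply cites \cite[Proposition 8.1]{Taylor}, and the localize-freeze-coefficients-and-absorb-by-interpolation scheme you outline is exactly the classical argument that reference employs. The Weitzenb\"ock alternative you mention at the end is also a valid route (and is perhaps more in the spirit of Hodge theory), but either way you are supplying a proof where the paper only gives a citation, and no step in your outline is problematic.
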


See \cite[Proposition 8.1]{Taylor} for proof. This inequality has the following immediate corollaries:
\begin{corr} 
\begin{enumerate}
\label{cr:elreg}
\item $\P0 L^2(K)$ is finite dimensional.
\item For every $N$ there exists a constant $C_N$ such that for every $\o\in L^2(K)$
\[
\frac{1}{C_N}\|\P0 \o\|_{H^k(K)}\leq\|\P0 \o\|_{L^{\infty}(K)}\leq C_N\|\P0 \o\|_{H^k(K)}, k\leq N, x\in \re^3.
\]
\end{enumerate}
\end{corr}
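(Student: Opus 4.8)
The plan is to read both assertions off the elliptic estimate of the preceding claim, combined with the Rellich--Kondrachov compactness theorem on the compact manifold $K$. Note first that $\P0 L^2(K)$ is precisely $\ker\lk$, the space of harmonic forms, and it is a closed subspace of $L^2(K)$, being the range of the orthogonal projection $\P0=\chi_{\{0\}}\sql$. For $\o\in\P0 L^2(K)$ one has $\lk\o=0$, so the preceding claim reduces to $\|\o\|_{H^2(K)}\le C\|\o\|_{L^2(K)}$. Hence on the subspace $\P0 L^2(K)$ the $H^2$- and $L^2$-norms are equivalent, and in particular the closed $L^2$-unit ball of $\P0 L^2(K)$ is a bounded subset of $H^2(K)$. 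Since $K$ is compact, the inclusion $H^2(K)\hookrightarrow L^2(K)$ is compact, so this unit ball is precompact in the $L^2$-topology; being also closed (it is the unit ball of a closed subspace), it is compact. A normed space whose closed unit ball is compact is finite dimensional (Riesz), which gives assertion (1).

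For assertion (2) I would first bootstrap regularity. Iterating the higher-order elliptic estimate $\|\o\|_{H^{m+2}(K)}\le C_m\bigl(\|\lk\o\|_{H^m(K)}+\|\o\|_{L^2(K)}\bigr)$ together with $\lk\o=0$ shows that every $\o\in\P0 L^2(K)$ lies in $H^k(K)$ for all $k$, with $\|\o\|_{H^k(K)}\le C_k\|\o\|_{L^2(K)}$; by Sobolev embedding (recall $\dim K=7$) such an $\o$ is in $C^\infty(K)$, hence in particular in $L^\infty(K)$. Consequently, for each integer $k\le N$, both $\|\cdot\|_{H^k(K)}$ and $\|\cdot\|_{L^\infty(K)}$ are genuine (finite, positive-definite) norms on the finite-dimensional vector space $\P0 L^2(K)$. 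Since all norms on a finite-dimensional space are equivalent, there is a constant $C_N$, depending only on $N$ and the geometry of $K$, so that the claimed two-sided estimate holds for every $\o$ and every $k\le N$; the uniformity in $k$ is automatic because only finitely many values of $k$ are involved, and the stated dependence on $x\in\re^3$ is vacuous since everything here lives on $K$.

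The substantive step is assertion (1): converting the a priori bound $\|\o\|_{H^2(K)}\le C\|\o\|_{L^2(K)}$ into finite-dimensionality of $\ker\lk$ via compactness of the Sobolev embedding. Once finite-dimensionality is secured, assertion (2) is a soft consequence of the standard elliptic bootstrap plus the equivalence of all norms on a finite-dimensional space, so I expect no genuine difficulty there.
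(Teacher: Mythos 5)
Your argument is correct and is essentially the standard route the paper is implicitly invoking when it calls these facts ``immediate corollaries'' of the elliptic estimate (the paper cites Taylor and supplies no further proof). For (1): the estimate $\|\o\|_{H^2(K)}\le C\|\o\|_{L^2(K)}$ on $\ker\lk$, Rellich--Kondrachov compactness of $H^2(K)\hookrightarrow L^2(K)$, closedness of the range of the orthogonal projection $\P0$, and Riesz's theorem is exactly the right chain. For (2): the elliptic bootstrap to get $\P0 L^2(K)\subset C^\infty(K)$, followed by equivalence of all norms on a finite-dimensional space, is the intended ``soft'' argument, and you are right that finitely many values of $k\le N$ make the uniformity trivial. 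One small thing worth flagging: the paper's Claim states only the second-order estimate ($m=0$), whereas your bootstrap needs the higher-order version $\|\o\|_{H^{m+2}(K)}\le C_m(\|\lk\o\|_{H^m(K)}+\|\o\|_{L^2(K)})$; this is standard and is in the cited reference (\cite[Proposition 8.1]{Taylor} and its neighborhood), but it is an input you are using that the paper did not explicitly display. An alternative, marginally slicker, route to (1) is to observe that compactness of the embedding makes $(-\lk+1)^{-1}$ a compact self-adjoint operator on $L^2(K)$, so $-\lk$ has discrete spectrum with finite-multiplicity eigenvalues and in particular a finite-dimensional kernel; this packages your Riesz-lemma step into the spectral theorem and also gives the spectral decomposition used elsewhere in the paper (e.g.\ in the definitions of $\P0,\Pm$ and in the subsequent corollary on $\sql^{n/2}$) in one stroke. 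Either way, your proof is sound.
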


\begin{corr} for every $N$, there are constants $A_N$ such that for every $\o\in H^n(K)$
\[
\frac{1}{A_N}\|\Pm \o\|_{H^n(K)}\leq \|\sql^{\frac{n}{2}} \o\|_{L^2(K)}  \leq A_N\|\Pm \o\|_{H^n(K)},\quad \forall n\leq N.
\]
\end{corr}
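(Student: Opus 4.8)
\emph{Proof plan.} The plan is to reduce the statement to the range of $\Pm$ and then read it off from the functional calculus of $\sql$, using two standard facts about $\lk$ on the compact manifold $K$. For $n\ge 1$ both sides depend on $\o$ only through $\Pm\o$ — the middle term because $\sql^{n/2}$ annihilates harmonic forms ($0^{n/2}=0$ in the functional calculus) — so I may assume $\o\in\Pm L^2(K)$, in which case the case $n=0$ is the triviality $\|\o\|_{L^2}=\|\o\|_{L^2}$. It therefore suffices to show, for $\psi\in\Pm L^2(K)$ and $n\le N$,
\[
\|\psi\|_{H^n(K)}\ls\|\sql^{n/2}\psi\|_{L^2(K)}\ls\|\psi\|_{H^n(K)} .
\]

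The two inputs are: (i) a \emph{spectral gap} — there is $\l_1>0$, depending only on the geometry of $K$, with $\sql\ge\l_1^2>0$ on the range of $\Pm$; and (ii) a \emph{Sobolev norm characterization} — for each $n\le N$, $\|\phi\|_{H^n(K)}$ is equivalent to $\|(I-\lk)^{n/2}\phi\|_{L^2(K)}$ with constants depending only on $n$ and $K$. For (i) I would argue as in Corollary~\ref{cr:elreg}(1): the elliptic estimate of the preceding Claim together with Rellich's theorem shows the $L^2$-unit ball of $\P0 L^2(K)$ is bounded in $H^2(K)$, hence relatively compact in $L^2(K)$, so $\P0 L^2(K)$ is finite-dimensional; since $\lk$ has compact resolvent on $K$, its spectrum consists of eigenvalues accumulating only at $-\infty$, so $0$ is an isolated eigenvalue and the nonzero part of $\mathrm{spec}(\lk)$ lies in $(-\infty,-\l_1^2]$ for some $\l_1>0$, which is the asserted gap (\cite[section 5.8]{Taylor}). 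For (ii): for even $n=2m$ this follows by iterating the Claim in the form $\|\phi\|_{H^{j+2}}\ls\|\lk\phi\|_{H^{j}}+\|\phi\|_{H^{j}}$, and the odd values follow by complex interpolation; alternatively the whole statement is standard, see \cite[section 5.8]{Taylor}.

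Granting (i) and (ii), the rest is one line in the functional calculus of $\sql$. For $\psi\in\Pm L^2(K)$ with spectral measure $E$ supported in $[\l_1^2,\infty)$, one has $\|\sql^{n/2}\psi\|_{L^2}^2=\int\mu^{n}\,d\|E_\mu\psi\|^2$ and $\|(I-\lk)^{n/2}\psi\|_{L^2}^2=\int(1+\mu)^{n}\,d\|E_\mu\psi\|^2$; since $1\le(1+\mu)/\mu\le 1+\l_1^{-2}$ for all $\mu\ge\l_1^2$, it follows that $\mu^{n}\le(1+\mu)^{n}\le(1+\l_1^{-2})^{n}\mu^{n}$, hence
\[
\|\sql^{n/2}\psi\|_{L^2}\le\|(I-\lk)^{n/2}\psi\|_{L^2}\le(1+\l_1^{-2})^{n/2}\|\sql^{n/2}\psi\|_{L^2} .
\]
Combined with (ii), and taking $A_N$ to be the largest of the constants produced over $n\le N$, this gives both inequalities of the corollary.

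The step I expect to be the main obstacle is the odd (half-integer) case of (ii): iterating the stated $H^2$-estimate only reaches even Sobolev exponents, and one cannot close the gap by feeding an interpolation inequality directly into the scalar $\|\sql^{n/2}\psi\|_{L^2}$, since that would amount to a reverse Cauchy--Schwarz for the measure $d\|E_\mu\psi\|^2$ and is false. The fix is to interpolate at the level of spaces, using that the complex interpolation space between the domains of $(I-\lk)^{m}$ and $(I-\lk)^{m+1}$ is exactly the domain of $(I-\lk)^{m+1/2}$ with equivalent norm, matched against $[H^{2m}(K),H^{2m+2}(K)]_{1/2}=H^{2m+1}(K)$; everything else is routine bookkeeping.
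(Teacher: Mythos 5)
Your proof is correct. The paper gives no argument of its own for this corollary --- it only points to the discussion around \cite[Equation (8.20)]{Taylor} --- and that discussion is precisely the spectral-theoretic argument you lay out (spectral gap on $\Pm L^2(K)$, characterization of $H^n(K)$ by $\|(I-\lk)^{n/2}\cdot\|_{L^2(K)}$ via the functional calculus and eigenfunction expansion of $\lk$, half-integer exponents by interpolation at the level of operator domains rather than inside the spectral integral), so your approach coincides with the cited one.
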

See the discussion leading to \cite[Equation (8.20)]{Taylor} for the proofs. 
The practical conclusion that we will draw from these two corollaries is that when measuring smoothness of the solution in $K$ variables, we can ignore the question completely for $u= \P0 u$ and use the $\sql^{1/2}$ operator for $u=\Pm u$.
\section{Linear Estimates}
\label{sc:lin}
In this subsection, we would like to obtain decay estimates for the linear inhomogeneous equation. We will leverage this decay by employing the following subset of Klainerman vector fields:
\bb
\label{eq:poinc}
\tg=\{\partial_{i},i=0..3\}\cap \{ \O_{ij},i,j=0..3\},
\ee
where
\[
\O_{ij}=x_i\der{}{x_j}-x_j\der{}{x_i},\quad 1\geq i,j\geq 3
\]
\[
\O_{0j}=x_0\der{}{x_i}+x_i\der{}{x_0},\quad 1\geq i \geq 3
\]
We augment $\tg$ with the operator $\sql^{1/2}$
\[
\g=\tg\cup \{ \sql^{1/2} \}=\{\partial_{i},i=0..3\}\cup \{ \O_{ij},i,j=0..3\}\cup \{ \sql^{1/2} \}.
\]
We will index the set $\g$ by $i=1..11$ and for a multi-index $I=(I_1,I_2,..,I_{|I|})\in \{1,..,11\}^{|I|}$ we define the composition
\[
 \g^I=\g_{I_1}\g_{I_2}..\g_{I_{|I|}}.
\]
We will introduce some notation to simplify the presentation.  We will denote for an integer $N$, an abstract vector valued function:
\[
\gv{N}f=(\g^{\al}f)_{|\al|\leq N}.
\]
Accordingly we will interpret the following notations
\[
|\gv{N}f|=\sum_{|\al|\leq N}|\g^{\al}f|
\]
and 
\[
\|\gv{N}f\|_{p}=\sum_{|\al|\leq N}\|\g^{\al}f(t,x,y)\|_{L^p(\re^3\times K)}.
\]
We will also have a similar notation for the gradients
\[
|\nabla\gv{N}f|=\sum_{|\al|\leq N}\sum\limits_{i=0}^3|\der{}{x_i}\g^{\al}f|+\sum_{|\al|\leq N}|\sql^{\frac{1}{2}}\g^\al f|
\]
and
\begin{align*}
\|\nabla\gv{N}f\|_{p}=&\sum_{|\al|\leq N}\sum\limits_{i=0}^3\|\der{}{x_i}\g^{\al}f\|_{L^p(\re^3\times K)}\\
&+\sum_{|\al|\leq N}\|\sql^{\frac{1}{2}}\g^\al f\|_{L^p(\re^3\times K)}
\end{align*}
All those norm will be taken at a certain time $t$, which we will drop from the notation when there is no ambiguity. We will fix coordinate patches on $K$, with the appropriate partition of unity. That will turn our objects into vector valued functions on $\re^{3+1}\times \re^7$, so that we will apply the vector fields $\tg$ simply by applying them on every component of $u$.
\subsection{Linear estimates in $\re^{3+1}$} We recall the following estimates in $\re^{3+1}$ which we  seek to generalize to the product case $\rk$.
\begin{prop} Let $w\in C^{\infty}(\re^{3+1})$ such that $w(t,x)=0,t< 2B$ and $\square_{\re^{3+1}}w(t,x)=0$ for $|x|>t-B$ then
\label{pr:wavedec}
\begin{align*}
(1+t)|\nabla_{t,x}w(t,x)|\lesssim& \|\nabla_{t,x}\gv{2}w(2B,\cdot)\|_{L^2(\re^3)}\\\
&+\sum\limits_k \sup\limits_{\tau\in[2^{k-1},2^{k+1}]\cap [2B,t]}{ 2^k\|\gv{2}\square w(\tau,\cdot)\|_{L^2(\re^3)}}. 
\end{align*}
\end{prop}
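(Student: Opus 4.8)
The plan is to split $w$ into the free evolution of its Cauchy data at $t=2B$ and the Duhamel contribution of the forcing $F:=\square_{\re^{3+1}}w$, and then to estimate $F$ through a dyadic-in-time decomposition in which every piece is handled by combining the energy identity with the Klainerman--Sobolev inequality on $\re^3$. Concretely, write $w=w_0+w_1$, with $\square_{\re^{3+1}}w_0=0$ carrying the data of $w$ at $t=2B$ and $w_1$ carrying zero data there with $\square_{\re^{3+1}}w_1=F$. Since $w$ (hence $F$) vanishes for $t<2B$, and since, by Duhamel and finite speed of propagation combined with the support hypothesis, $w(t,\cdot)$ and $F(t,\cdot)$ are supported in $\{|x|\le t-B\}$, one has $1+t+|x|\sim 1+t$ throughout. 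Note also that $\sql^{1/2}$ annihilates functions of $(t,x)$ alone, so only the fields $\tg$ are relevant here.

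For $w_0$: every element of $\tg$ commutes with $\square_{\re^{3+1}}$, so $\square_{\re^{3+1}}\g^\al w_0=0$ and the energy identity gives $\|\nabla_{t,x}\g^\al w_0(t)\|_{L^2(\re^3)}=\|\nabla_{t,x}\g^\al w_0(2B)\|_{L^2(\re^3)}$. On the bounded support of the data the boosts $\O_{0j}$ have bounded coefficients, and any second- or higher-order time derivative of $w_0$ at $t=2B$ is traded for spatial derivatives by the equation, so $\sum_{|\al|\le2}\|\nabla_{t,x}\g^\al w_0(2B)\|_{L^2}\ls\|\nabla_{t,x}\gv{2}w(2B)\|_{L^2(\re^3)}$. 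Feeding this into the Klainerman--Sobolev inequality $(1+t+|x|)\,|\phi(t,x)|\ls\sum_{|\al|\le2}\|\g^\al\phi(t,\cdot)\|_{L^2(\re^3)}$ applied to $\phi=\nabla_{t,x}w_0$ (the commutators of $\nabla_{t,x}$ with $\g^\al$ are of lower order), and discarding $1+t+|x|\ge1+t$, gives $(1+t)\,|\nabla_{t,x}w_0(t,x)|\ls\|\nabla_{t,x}\gv{2}w(2B)\|_{L^2(\re^3)}$, the first term on the right.

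For $w_1$: fix a partition of unity $1=\sum_k\be_k(t)$ on $[2B,\infty)$ with $\text{supp}\,\be_k\subset[2^{k-1},2^{k+1}]$ (the bottom cutoff supported in $[2B,4B]$), put $F_k=\be_k F$, and let $v_k$ solve $\square_{\re^{3+1}}v_k=F_k$ with zero data at $t=2B$, so that $w_1=\sum_k v_k$, the sum being locally finite, with $v_k\equiv0$ for $t<2^{k-1}$; hence at a given $t$ only the $k$ with $[2^{k-1},2^{k+1}]\cap[2B,t]\neq\emptyset$ contribute. Because $F$ and all its derivatives vanish at $t=2B$, each $\g^\al v_k$ likewise has zero data there and solves $\square_{\re^{3+1}}(\g^\al v_k)=\g^\al(\be_k F)$, so by the energy identity $\|\nabla_{t,x}\g^\al v_k(t)\|_{L^2}\ls\int_{2^{k-1}}^{2^{k+1}}\|\g^\al(\be_k F)(s)\|_{L^2}\,ds$ for every $t\ge2B$. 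When a vector field falls on $\be_k$, the only surviving term is $\O_{0j}\be_k=x_j\be_k'(t)$; on $\text{supp}\,F$ one has $|x|\le t-B\ls2^k$ and $|\be_k'|\ls2^{-k}$, so $|x_j\be_k'|\ls1$ and therefore $\|\g^\al(\be_k F)(s)\|_{L^2}\ls\|\gv{2}F(s)\|_{L^2}$ for $|\al|\le2$ and $s\in[2^{k-1},2^{k+1}]$. This yields $\|\nabla_{t,x}\g^\al v_k(t)\|_{L^2}\ls2^k\sup_{s\in[2^{k-1},2^{k+1}]\cap[2B,t]}\|\gv{2}F(s)\|_{L^2}$; applying Klainerman--Sobolev to $\nabla_{t,x}v_k$ as before, discarding $1+t+|x|\ge1+t$, and summing in $k$ produces exactly the second term.

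The step I expect to be the real obstacle is this last use of the support hypothesis. Without the confinement $|x|\le t-B$ on $\text{supp}\,F$, a boost $\O_{0j}$ hitting the time cutoff $\be_k$ would generate a factor $x_j\be_k'\sim t\,2^{-k}$, which for $t\gg2^k$ destroys the bound on $\|\nabla_{t,x}\g^\al v_k(t)\|_{L^2}$ that is uniform in $t$; it is precisely the hypothesis $\square_{\re^{3+1}}w=0$ for $|x|>t-B$ that keeps this factor of size $\ls1$ and thereby lets the energy of each dyadic piece be frozen for all later times. The remaining ingredients --- the commutator bookkeeping that keeps the number of Klainerman fields at two, the reduction of time derivatives of the data via the equation, and the precise form of the Klainerman--Sobolev inequality on $\re^3$ --- are routine and I would treat them quickly.
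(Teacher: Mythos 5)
Your framework---split off the free evolution, dyadically decompose the forcing, estimate each piece by the energy identity, note that the support hypothesis keeps $x_j\be_k'$ bounded---matches the shape of the statement, and those pieces are sound. The gap is the step you dismiss as routine: the pointwise inequality
$(1+t+|x|)\,|\phi(t,x)|\ls\sum_{|\al|\le 2}\|\tg^\al\phi(t,\cdot)\|_{L^2(\re^3)}$
is \emph{not} a Klainerman--Sobolev inequality for the restricted set $\tg$; it is false. The classical inequality of that shape uses the scaling field $S=t\der{}{t}+\sum_j x_j\der{}{x_j}$, which is deliberately absent from $\tg$ (see the remark after Lemma~\ref{lm:null}), and without $S$ one cannot trade a time derivative for a factor $t^{-1}$ in the interior $|x|\ll t$. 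Concretely, fix $t_0\gg 1$, set $R=t_0^{1/2}$, take a radial bump $g_0(x)=\eta(|x|/R)$ with $\eta(0)=1$, write $g_0=G(|x|^2/2)$, and prescribe the $2$-jet in $t$ at time $t_0$ by $\phi=g_0$, $\partial_t\phi=-t_0G'$, $\partial_t^2\phi=t_0^2G''-G'$ (this choice makes $\O_{0j}\phi$, $\O_{0j}\O_{0k}\phi$, $\partial_t\O_{0j}\phi$ and $\partial_k\O_{0j}\phi$ all vanish at $t=t_0$, and is compatible with radial symmetry). A direct check shows every $\|\tg^\al\phi(t_0,\cdot)\|_{L^2(\re^3)}$ with $|\al|\le 2$ is $\ls R^{3/2}=t_0^{3/4}$---the worst terms are $\|\phi\|$, $\|\partial_t\phi\|$, $\|\partial_t^2\phi\|$, $\|\O_{0j}\partial_j\phi\|$---while $(1+t_0)|\phi(t_0,0)|\sim t_0$, so the claimed bound fails by a factor $t_0^{1/4}$.

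Because of this, ``energy estimate, then Klainerman--Sobolev, then sum in $k$'' does not close. The paper does not prove Proposition~\ref{pr:wavedec} from scratch: it cites \cite[Proposition 3.1]{MS2}, and the mechanism there---as in the Klein--Gordon analogue (Theorem~\ref{th:kgdec}) proved in the Appendix following H\"ormander---is a hyperboloidal/representation argument that foliates by the level sets of $t^2-|x|^2$ and uses a trace-type bound (Proposition~\ref{pr:hyper}), not a fixed-time Sobolev inequality with the restricted fields. What your proposal lacks is a replacement for that interior-decay mechanism; that replacement is precisely the nontrivial content of the proposition.
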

This proposition is proven in \cite[Proposition 3.1]{MS2}. Although \cite{MS2} proves it with zero Cauchy data, the estimate with non-zero Cauchy data is proven in the same manner.
\begin{prop} Let $w\in C^{\infty}(\re^{3+1})$ such that $(\square_{\re^{3+1}}+1)w(t,x)=0$ for $|x|\geq t-B$ then
 \begin{align*}
(1+t)^{3/2}\sup_x(|w(t,x)|&\\
+|\nabla_{t,x}w(t,x)|) \lesssim&\|\nabla_{t,x} \gv{5} w(2B,x)\|_{L^2(\re^3)}
\\
&+ \sum_k \sup \limits_{\stack{\tau\in}{[2^{k-1},2^{k+1}]\cap [2B,t]}} 2^k\|\gv{5}F(\tau,\cdot)\|_{L^2(\re^3)},
 \end{align*}
where $F=(\square_{\re^{3+1}}+1)w$.
\end{prop}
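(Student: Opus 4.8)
The plan is to reduce this Klein-Gordon decay estimate to the classical pointwise decay rate $(1+t)^{-3/2}$ for the Klein-Gordon equation on $\re^{3+1}$, exactly in the spirit of the companion wave estimate (Proposition~\ref{pr:wavedec}), but keeping track of the extra derivatives that Klein-Gordon requires. The basic mechanism is Klainerman's Sobolev-type inequality adapted to Klein-Gordon: the operators $\tg$ commute (up to controllable commutators) with $\square_{\re^{3+1}}+1$, and from an $L^2$-bound on $\gv{5}w$ on suitable time slices one recovers an $L^\infty$-bound with the gain of $(1+t)^{-3/2}$. The reason one needs \emph{five} vector fields rather than two is that the Klein-Gordon decay estimate in dimension $3$ costs more angular and Lorentz-boost regularity than the wave decay: one loses the scaling vector field (which does not commute with $\square+1$), so the hyperboloidal/Klainerman-Sobolev argument must be run with a larger reservoir of $\tg$'s, and the standard count is $|\alpha|\le 5$ for the gradient bound in this setting (cf.\ the references \cite{MS2}, \cite{MSS} on which the paper is modeled).

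First I would set up the usual dyadic decomposition in time: write $w = w_0 + \sum_k w_k$ where $w_0$ solves the homogeneous equation $(\square_{\re^{3+1}}+1)w_0 = 0$ with the Cauchy data of $w$ at $t=2B$, and each $w_k$ solves the inhomogeneous equation with source $F$ truncated to the dyadic time window $\tau\in[2^{k-1},2^{k+1}]$ and zero data before that window. For $w_0$ one invokes the sharp $L^1$--$L^\infty$ (or energy-plus-Klainerman-Sobolev) decay estimate for the homogeneous Klein-Gordon equation, which gives $(1+t)^{3/2}(|w_0|+|\nabla_{t,x}w_0|) \lesssim \|\nabla_{t,x}\gv{5}w(2B,\cdot)\|_{L^2(\re^3)}$; here the finite propagation hypothesis $|x|\ge t-B$ confines everything inside the light cone so the weights $(1+t)$ and $(1+|t-|x||)$ are comparable on the support. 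For each $w_k$ one uses the Duhamel representation, applies the homogeneous estimate with "initial time" $\simeq 2^k$ and "initial data" controlled by $2^k\|\gv{5}F(\tau,\cdot)\|_{L^2}$ on that window, and propagates forward; summing over $k$ and taking the supremum over each dyadic window reproduces the second term on the right-hand side. Throughout, commuting $\gv{5}$ past $(\square_{\re^{3+1}}+1)$ produces only terms of the same or lower order in $\tg$ (the commutator $[\O_{ij},\square]=0$, and $\sql^{1/2}$ commutes with everything acting on $\re^{3+1}$), so no genuine loss occurs.

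The main obstacle I anticipate is the bookkeeping at the intersection of three features that are all slightly nonstandard here: (i) the Klein-Gordon mass is exactly $1$ but in the application (Section~\ref{sc:nonlin}) the relevant operator is $\square_{\re^{3+1}} - (\Delta_K) = \square_{\re^{3+1}} + \lambda^2$ with $\lambda$ ranging over the nonzero spectrum of $-\lk$, so one must verify the decay constant is uniform as $\lambda\to\infty$ (it is, after rescaling $t\mapsto \lambda t$, which only improves the decay, but this needs a remark); (ii) the vector fields $\tg$ do not include scaling, so one cannot use the full Klainerman inequality and must instead use the boost-and-rotation-only version, which is why the derivative count jumps to $5$; and (iii) the data is prescribed at $t=2B$ rather than $t=0$ and the support condition is stated with the shifted cone $|x|\ge t-B$, so one must check the weighted norms are equivalent on this shifted region — this is routine but must be done carefully so the $(1+t)$ and $(1+t)^{3/2}$ weights come out with the stated powers. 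Since the paper explicitly says the analogous wave statement "is proven in the same manner" in \cite{MS2}, I expect the intended proof here is likewise a citation-plus-remark: cite the standard Klein-Gordon decay estimate, note that the presence of nonzero Cauchy data and the shifted cone change nothing, and note that $\sql^{1/2}$ is inert on the $\re^{3+1}$ estimates so it may be carried along for free inside $\gv{5}$.
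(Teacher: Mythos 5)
Your high-level diagnosis is correct: the paper's ``proof'' of this proposition is a one-line citation to H\"ormander (Proposition 7.3.6), which refines Klainerman's original Klein--Gordon decay result, and your remarks that nonzero Cauchy data, the shifted time $t=2B$, and the inertness of $\sql^{1/2}$ change nothing are exactly the kind of modification intended (mirroring the remark the paper itself makes after Proposition~\ref{pr:wavedec}).

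The reconstruction you sketch, however, does not match H\"ormander's argument and, read literally, would not produce the sharp rate. You propose to split $w=w_0+\sum_k w_k$, treat each $w_k$ as a homogeneous solution launched at initial time $\simeq 2^k$ with data of size $2^k\sup_{I_k}\|\gv{5}F\|$, and ``propagate forward.'' If ``propagate forward'' means the time-translation-invariant dispersive estimate, each piece decays like $(1+(t-2^k))^{-3/2}$; for the blocks with $2^k\sim t$ this factor is $O(1)$, and the sum $\sum_k(1+(t-2^k))^{-3/2}\,2^k M_k$ cannot be dominated by $(1+t)^{-3/2}\sum_k 2^k M_k$. Concretely, for $\|\gv{5}F(\tau)\|$ constant in $\tau$ the stated proposition yields $|w(t)|\lesssim(1+t)^{-1/2}\|\gv{5}F\|$, whereas a translation-invariant Duhamel integral $\int_0^t(1+t-s)^{-3/2}\,ds$ only gives a bound of order $\|\gv{5}F\|$ without any decay. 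The point is that once the scaling field is discarded, the Klainerman--Sobolev mechanism is not a per-time-slice dispersive decay from each source time; it is a single foliated estimate at the final time $t$, and the $2^k$ factors in the right-hand side arise from the energy estimate, not from a Duhamel kernel bound.

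What H\"ormander and Klainerman (and the paper's own appendix, for the product version in Theorem~\ref{th:kgdec}) actually do is pass to hyperbolic polar coordinates $(\tau,x)=\rho\omega$, $\rho^2=\tau^2-|x|^2$, rewrite the Klein--Gordon equation as a one-dimensional ODE in $\rho$ for $v=\rho^{3/2}u$ driven by $\rho^{-1/2}(\rho^{-2}\Delta_H u+\tfrac34\rho^{-2}u+F)$, and bound $|v|$ by an energy/ODE estimate (H\"ormander Lemma 7.3.4, paper's Lemma~\ref{lm:mani}). The dyadic decomposition is applied to this \emph{source}, not to the solution, and the $(1+t)^{-3/2}$ weight emerges from dividing by $\rho^{3/2}$ after the hyperboloidal Sobolev inequality (H\"ormander Lemma 7.3.1, reproduced as Proposition~\ref{pr:hyper}) supplies the crucial extra factor $T^{-2}$. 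The support condition $F=0$ for $|x|\geq t-B$ is what keeps the solution localized inside the light cone so the hyperboloids are spacelike and $\rho\gtrsim\sqrt{\tau}$; this is essential, not merely a ``routine check'' as you suggest. The count $|\alpha|\leq 5$ also falls out of this hyperboloidal bookkeeping rather than from a time-slice Sobolev inequality.

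Your forward-looking concern about uniformity in the mass parameter is a reasonable instinct but is out of scope for this proposition (mass exactly $1$). In the paper it is handled where it matters, in Theorem~\ref{th:kgdec}, by replacing the Sobolev step in H\"ormander's Lemma 7.3.4 with Lemma~\ref{lm:mani}, working directly with $\sql$ on $K$ rather than rescaling each mode.
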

This proposition is proved in \cite[Proposition 7.3.6]{Hor}, refining the previous work of Klainerman \cite{KKG}.
\subsection{Linear estimates in $\rk$}
We turn to obtaining estimates for the equation 
\[
\square u -\lk u= F,\quad u(0)=u_0,\dot{u}(0)=u_1.
\]
Since the spectral projections $\mathcal{P}_A$ commute with this equation, we will split the equation into two equations
\[
\square \P0 u =\P0 F
\]
and
\[
\square \Pm u -\lk \Pm u= \Pm F,
\]
with the spectral projections applied to initial data as well. By elliptic regularity and the estimate for the wave equation, Proposition \ref{pr:wavedec}, we have the following estimate.
\begin{prop}
\label{pr:0dec}
Let $\text{supp} F(\cdot,\cdot,y)\subseteq \{(t,x):|t-|x||\leq 1 \}$ for every $y\in K$ then the solution of 
\[
\square \P0 u =\P0 F
\]
obeys the estimate
\begin{align*}
(1+t)|\nabla_{t,x} (\P0 u)(t,x,y)|\lesssim& \|\nabla_{t,x}\gv{2} (\P0 u)(0,\cdot,\cdot)\|_2\\
&+\sum\limits_k \sup\limits_{s\in[2^{k-1},2^{k+1}]\cap [0,t]}{ 2^k\|\gv{2} F(s,\cdot,\cdot)\|_2}. 
\end{align*}
\end{prop}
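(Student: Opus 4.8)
The plan is to diagonalize $\P0$ and reduce the statement to finitely many copies of the scalar estimate of Proposition \ref{pr:wavedec} on $\re^{3+1}$. First I would use Corollary \ref{cr:elreg} to realize $\P0 L^2(K)$ as a finite-dimensional space with an $L^2(K)$-orthonormal basis $\{\omega_1,\dots,\omega_m\}$ of harmonic forms, which are smooth and hence bounded on the compact manifold $K$. Decomposing $\P0 u(t,x,y)=\sum_{j=1}^m u^j(t,x)\,\omega_j(y)$ with $u^j=\langle \P0 u,\omega_j\rangle_{L^2(K)}$, and $F^j=\langle \P0 F,\omega_j\rangle_{L^2(K)}$, the equation $\square\P0 u=\P0 F$ is equivalent, since $\lk$ annihilates harmonic forms, to the decoupled system $\square_{\re^{3+1}}u^j=F^j$, $j=1,\dots,m$. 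Each $F^j$ inherits from $F$ the support property $\text{supp}\,F^j\subseteq\{|t-|x||\le 1\}$, and the Cauchy data of $u^j$ at $t=0$ are the $\omega_j$-components of $(\P0 u)(0,\cdot,\cdot)$.

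Next I would record two elementary facts about how $\gv{2}$ interacts with this decomposition. The Klainerman fields in $\tg$ act only in the $\re^{3+1}$ variables, so they commute with $\P0$ and with projection onto $\omega_j$; and $\sql^{1/2}\P0=0$ since harmonic forms lie in $\ker\lk$. Hence $\g^\alpha\P0 u$ vanishes whenever the multi-index $\alpha$ contains a factor $\sql^{1/2}$, and otherwise equals $\sum_j(\g^\alpha u^j)\,\omega_j$. Using boundedness of the finitely many $\omega_j$ on $K$ this gives the pointwise bound $|\nabla_{t,x}\gv{2}(\P0 u)(t,x,y)|\ls\sum_{j=1}^m|\nabla_{t,x}\gv{2}u^j(t,x)|$, where on the right $\gv{2}$ involves only the fields $\tg$; conversely, by orthonormality of the $\omega_j$ and the vanishing of the $\sql^{1/2}$-terms, $\sum_j\|\nabla_{t,x}\gv{2}u^j(0,\cdot)\|_{L^2(\re^3)}\ls\|\nabla_{t,x}\gv{2}(\P0 u)(0,\cdot,\cdot)\|_2$ and $\sum_j\|\gv{2}F^j(s,\cdot)\|_{L^2(\re^3)}\ls\|\gv{2}(\P0 F)(s,\cdot,\cdot)\|_2\le\|\gv{2}F(s,\cdot,\cdot)\|_2$.

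Then I would apply Proposition \ref{pr:wavedec}, in the variant with nonvanishing Cauchy data prescribed at $t=0$ noted in the remark following it, to each $u^j$, and sum the resulting bounds over $j=1,\dots,m$, feeding in the two conversions above; this produces exactly the claimed inequality. The hard part is really only matching the hypotheses of Proposition \ref{pr:wavedec}: that proposition is phrased for a source supported strictly inside the cone and vanishing data, whereas here $F$ straddles the light cone within unit width and there is genuine Cauchy data at $t=0$. The point to verify is that the support assumption on $F^j$ together with finite speed of propagation for the $t=0$ data forces $u^j(t,\cdot)$ to be supported in $|x|\le t+1$, so $\square_{\re^{3+1}}u^j=F^j$ still vanishes for $|x|$ large relative to $t$ — i.e. the exterior-homogeneity hypothesis holds with a fixed constant $B$ (one may translate in $t$ if one insists on $B>0$) — and concentration within unit distance of the cone keeps us in precisely the regime in which the Metcalfe--Stewart weighted estimate closes. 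Everything else is bookkeeping, and it is clean exactly because $\sql^{1/2}$ annihilates the range of $\P0$.
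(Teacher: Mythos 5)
Your argument is correct, and it arrives at the same destination as the paper, but by a visibly different decomposition. The paper does not introduce a basis of harmonic forms. Instead it fixes $y\in K$, observes that $(\P0 u)(\cdot,\cdot,y)$ solves a (vector-valued) wave equation on $\re^{3+1}$ with source $(\P0 F)(\cdot,\cdot,y)$, and applies Proposition \ref{pr:wavedec} directly to this slice; it then invokes Corollary \ref{cr:elreg} at the very end, in the form of the $L^\infty(K)\ls L^2(K)$ bound on the range of $\P0$, to replace the pointwise-in-$y$ quantity $\|(\P0 F)(s,\cdot,y)\|_{L^2(\re^3)}$ by $\|F(s,\cdot,\cdot)\|_{L^2(\re^3\times K)}$. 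You instead use the finite dimensionality of $\P0 L^2(K)$ (the other half of Corollary \ref{cr:elreg}) to expand $\P0 u=\sum_j u^j\omega_j$ onto an orthonormal basis of bounded harmonic forms and run the scalar estimate on each coefficient $u^j$, recombining via orthonormality and $\ell^1$--$\ell^2$ on a set of fixed finite cardinality. Both routes lean on the same underlying facts — $\sql^{1/2}\P0=0$, the $\re^{3+1}$ fields commute with $\P0$, and the range of $\P0$ is finite dimensional and consists of smooth bounded forms — and both are correct; yours is a bit more explicit about the algebra, the paper's is a bit shorter.

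One point where the paper is more careful and you only gesture: to put the problem literally into the form required by Proposition \ref{pr:wavedec} (zero data for $t<2B$, source vanishing outside $|x|<t-B$), the paper performs the explicit translation $\tau=t+2$, takes $B=1$, and then records the bookkeeping identity
\[
\O_{0i}(\tau)=\O_{0i}(t)+2\der{}{t},
\]
so that the $\tau$-fields are controlled by the $t$-fields with harmless constants. You note that a translation in $t$ and finite speed of propagation make the exterior-homogeneity hypothesis hold, which is the right idea, but you don't verify that the vector-field structure (and hence the $\gv{2}$ norms in the conclusion) survives the change of time variable; that check is short but it is not nothing, and is worth including.
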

\begin{proof}
 We wish to apply Proposition \ref{pr:wavedec} with $B=1$. For that we need to switch to a new coordinate $\tau=t+2$ then the proposition applies with one reservation: the vector fields in $\tau,x,y$ coordinates are different from the vector fields in $t,x,y$ coordinates but they are expressible in terms of sums of the old ones since $\der{}{\tau}=\der{}{t}$ and
\[
 \O_{0i}(\tau)=\tau \der{}{x_i}+x_i\der{}{\tau}=(t+2)\der{}{x_i}+x_i\der{}{y}=\O_{0i}(t)+2\der{}{t}.
\]
Thus, the Proposition \ref{pr:wavedec} applies with possibly a different constant and $(t,x,y)$ vector fields to show that for every $y\in K$
\begin{align*}
(1+t)|\nabla_{t,x} (\P0 u)(t,x,y)|\lesssim & 
\|\nabla_{t,x}(\P0 u)(0,x,y)\|_{L^2(\re^{3})}
\\
&+\sum\limits_k \sup\limits_{s\in I_k}{ 2^k\|\gv{2} (\P0 F)(s,\cdot,y)\|_{L^2(\re^{3})}},
\end{align*}
where $I_k=[2^{k-1},2^{k+1}]\cap [0,t]$.
Apply elliptic regularity (Corollary \ref{cr:elreg}) to dominate $(\P0 F)(s,\cdot,y)$ by its $L^2(K)$ norm.
\end{proof}
\begin{theorem}
\label{th:kgdec}
Let $u(t,x,y)$ solve 
\[
(\square_{\re^{3+1}}-\Delta_K)\Pm u=\Pm F,
\]
then 
\begin{align*}
(1+t)^{3/2}|\Pm u(t,x,y)|\lesssim&\|\nabla\gv{9}\Pm u(0,x,y)\|_2\\
&+ \sum\limits_k \sup\limits_{s\in[2^{k-1},2^{k+1}]\cap [0,t]}{ 2^k\|\gv{9}\Pm F(,s,\cdot)\|_2},
\end{align*}
provided $\Pm F(\cdot,\cdot,y)$ is supported in $\{(x,t)||t-|x||\leq 1 \}$ for every $y\in K$.
\end{theorem}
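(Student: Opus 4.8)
The plan is to diagonalize in the $K$-variable and reduce to a family of constant-mass Klein--Gordon equations on $\re^{3+1}$, to each of which the decay estimate recalled above applies after a rescaling. Fix an $L^2(K)$-orthonormal eigenbasis $\{\phi_j\}$ of $\lk$ with $\lk\phi_j=-\lambda_j^2\phi_j$, and expand $\Pm u=\sum_j u_j(t,x)\phi_j(y)$ and $\Pm F=\sum_j F_j(t,x)\phi_j(y)$, the sums running over those $j$ with $\lambda_j>0$. Since $\lk$ commutes with $\square_{\re^{3+1}}$, each coefficient solves $(\square_{\re^{3+1}}+\lambda_j^2)u_j=F_j$ on $\re^{3+1}$, and by Corollary \ref{cr:elreg} the spectrum of $-\lk$ has a positive gap, so $\lambda_j\ge\lambda_0>0$ for every index occurring. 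As in the proof of Proposition \ref{pr:0dec} I would first pass to the coordinate $\tau=t+2$, so that the support hypothesis on $F_j$ takes the form demanded by the $\re^{3+1}$ estimate, absorbing the resulting modification of the $\O_{0i}$ into sums of the original Klainerman fields; I suppress this harmless change below.

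Next I would run the cited $\re^{3+1}$ Klein--Gordon decay (\cite[Proposition 7.3.6]{Hor}) mode by mode. For fixed $j$ write $\mu=\lambda_j$; then $w(s,z):=u_j(s/\mu,z/\mu)$ solves $(\square_{\re^{3+1}}+1)w=\mu^{-2}F_j(s/\mu,z/\mu)$, and applying the decay estimate to $w$ and evaluating at $(s,z)=(\mu t,\mu x)$ gives the bound for $u_j$. The time weight is harmless: since $\mu\ge\lambda_0$ one has $(1+t)/(1+\mu t)\lesssim 1$. The Klainerman fields $\partial_i,\O_{ij}$ commute with the dilation $(s,z)\mapsto(s/\mu,z/\mu)$ up to a factor $\mu^{-1}$ per translation, and the change of variables in the spatial $L^2(\re^3)$-norms produces Jacobian factors $\mu^{\pm 3/2}$; after reindexing the dyadic time-sum of the inhomogeneity back to the original scale one is left with an estimate of the shape
\[
(1+t)^{3/2}|u_j(t,x)|\ \lesssim\ \mu^{1/2}\Big(\|\nabla_{t,x}\gv{5}u_j(0,\cdot)\|_{L^2(\re^3)}+\sum_k\sup_{s\in[2^{k-1},2^{k+1}]\cap[0,t]}2^k\|\gv{5}F_j(s,\cdot)\|_{L^2(\re^3)}\Big),
\]
where $\gv{5}$ here involves only the $\re^{3+1}$ Klainerman fields. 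The point is that all dependence on the mode has been reduced to explicit powers of $\mu=\lambda_j$, with a constant depending only on $N$ and on the spectral gap (hence on the geometry of $K$).

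To reassemble I would convert the pointwise bound in $y$ into an $L^2(K)$ statement: since $\dim K=7<8$, $H^4(K)\hookrightarrow L^\infty(K)$, and the elliptic regularity Corollary gives $|\Pm u(t,x,y)|\lesssim\|\sql^{2}\Pm u(t,x,\cdot)\|_{L^2(K)}=\big(\sum_j\lambda_j^{8}|u_j(t,x)|^2\big)^{1/2}$. Squaring the mode-wise bound, multiplying by $\lambda_j^8$, and summing in $j$, every surviving power of $\lambda_j$ is turned back into an operator: applying $\sql^{1/2}$ to the $j$-th mode multiplies it by $\lambda_j$, so the weighted $\ell^2_j$-sums of $\gv{5}u_j(0)$ and of $\gv{5}F_j$ reassemble into $\|\nabla\gv{9}\Pm u(0,\cdot,\cdot)\|_{2}$ and into $\sum_k\sup_s2^k\|\gv{9}\Pm F(s,\cdot,\cdot)\|_{2}$ respectively, the count $9=5+4$ being the five Klainerman fields of the Klein--Gordon estimate together with four copies of $\sql^{1/2}$ from the Sobolev embedding on $K$ (the extra $\nabla$ on the data term carrying the energy-level derivative, and the half-power of $\lambda_j$ left over from the rescaling being swallowed by the spectral gap). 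Commuting the dyadic $\sum_k\sup_s$ past the $\ell^2_j$-summation by Minkowski's inequality then yields exactly the stated estimate.

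I expect the genuinely delicate step to be this last reassembly. One must keep precise track of the powers of $\lambda_j$ through the rescaling so that no more than the advertised number of vector fields is spent, and one must justify interchanging the dyadic $\sum_k\sup_{s}$ with both the $\ell^2_j$-sum over eigenmodes and the spatial $L^2(\re^3)$-norm; for this it is important that the time-supremum over each dyadic block be dominated by a single quantity, a norm of $\gv{9}\Pm F(s)$, before the summations are performed, and that the dyadic blocks produced by the $\mu$-dependent change of time scale recombine with only bounded overlap. Everything else --- the dilation-covariance of the Klainerman fields, the Jacobian bookkeeping, and the shift $\tau=t+2$ --- is routine.
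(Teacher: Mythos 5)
Your route is genuinely different from the paper's. You decompose $\Pm u$ into $\lk$-eigenmodes, rescale each mode to a unit-mass Klein--Gordon equation, apply Hörmander's $\re^{3+1}$ decay estimate mode by mode, and reassemble via the $H^4(K)\hookrightarrow L^\infty(K)$ embedding. The paper instead runs Hörmander's hyperboloidal argument directly on the product manifold, keeping $-\lk$ as a \emph{mass operator}: the only structural change is to replace Hörmander's scalar ODE energy lemma (Lemma 7.3.4 in \cite{Hor}) with Lemma \ref{lm:mani}, which is the same ODE estimate in the hyperbolic radial variable $\rho$ but now valued in $L^2(K)$, with the Sobolev embedding $H^4(K)\hookrightarrow L^\infty(K)$ folded in. The $K$-regularity thus stays in $L^2(K)$-form inside an $L^1_\rho$ Duhamel integral, and the dyadic $\sum_k\sup_s$ appears only afterward.

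This difference is not cosmetic; it is where your argument develops two gaps. First, the derivative count. Your rescaling $w(s,z)=u_j(s/\mu,z/\mu)$ produces a Jacobian factor $\mu^{3/2}$ in each $L^2(\re^3)$ change of variables and one factor $\mu^{-1}$ from the single $\nabla$ in Hörmander's data term (the boosts $\O_{0i}$ and rotations $\O_{ij}$ are scale-invariant, so no help there), and similarly $\mu^{-2}\cdot\mu^{3/2}\cdot\mu=\mu^{1/2}$ on the forcing after reindexing the dyadic time scales by $2^k\approx\mu 2^{k'}$. So your mode bound does carry the factor $\mu^{1/2}=\lambda_j^{1/2}$ as you state. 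Squaring, weighting by $\lambda_j^8$ (from $\|\sql^2\cdot\|_{L^2(K)}$), and summing gives a weight $\lambda_j^9$, which needs $4.5$ copies of $\sql^{1/2}$ after the five Klainerman fields are spent --- one more than $\gv{9}$ allows. Your claim that the leftover half-power is ``swallowed by the spectral gap'' is backwards: the gap gives $\lambda_j\ge\lambda_0>0$, a \emph{lower} bound, so a leftover $\lambda_j^{1/2}$ is an unbounded weight as $j\to\infty$ and cannot be absorbed into a constant. As written, your argument proves the theorem with $\gv{10}$, not $\gv{9}$. (The paper avoids the spurious $\mu^{1/2}$ precisely because it never rescales; the ODE energy in Lemma \ref{lm:mani} even \emph{gains} a power of $\sql^{1/2}$, asking only $\sql^{3/2}$ on the forcing while controlling $\sql^2 u$.)

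Second, the interchange you flag at the end is a real obstruction, not a bookkeeping chore. Applying Hörmander's estimate mode by mode puts a $\sup_{s\in I_k}$ \emph{inside} the $\ell^2_j$-sum, and $\big\|\sup_s a_j(s)\big\|_{\ell^2_j}\ge\sup_s\|a_j(s)\|_{\ell^2_j}$ --- the inequality goes the wrong way for you, and each mode's supremum can sit at a different time $s_j$. To repair this you must open up Hörmander's proof and perform the $\ell^2_j$-sum at the stage where the inhomogeneity enters as an $L^1_\rho$ Duhamel integral (where Minkowski's inequality with $\ell^2_j\,L^1_\rho$ does work), and only then perform the dyadic decomposition in time. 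That is exactly what Lemma \ref{lm:mani} accomplishes for the paper. So your scheme is salvageable, but at the cost of re-deriving rather than citing the Hörmander proposition, and of one extra derivative.
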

The proof of the theorem follows almost verbatim the proof of \cite[Proposition 7.3.5]{Hor} with the exception of the following modification of \cite[Lemma 7.3.4]{Hor}
\begin{lemma}
\label{lm:mani}
Let $K$ be a compact manifold. 
Let $u\in \O(K)$ solve the equation
\[
\der[2]{}{t}\Pm u-\Delta_K \Pm u=\Pm F,\quad u(0)=u_0,\der{u}{t}(0)=u_1,
\]
then
\begin{align*}
\|\Pm u\|_{L^{\infty}(K)}\leq& \|\lk^{2} \Pm u_0\|_{L^2(K)}+\|\sql^{3/2} \der{}{t}\Pm u_0\|_{L^2(K)}\\
&+\int_0^t{\|\Delta_K^{3/2}\Pm F(s,\cdot)\|_{L^2(K)}ds}.
\end{align*}
\end{lemma}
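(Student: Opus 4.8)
The plan is to reduce the Klein--Gordon estimate on the compact manifold $K$ to a Sobolev embedding on $K$ together with the standard energy identity for the (vector-valued) Klein--Gordon equation, exploiting the fact that $\sql$ is an elliptic, nonnegative, self-adjoint operator that commutes with the evolution and whose positive part is bounded below by a fixed gap $\l_0^2>0$. First I would note that, since $K$ is seven-dimensional, the Sobolev embedding $H^4(K)\hookrightarrow L^\infty(K)$ holds; combined with the elliptic regularity estimate $\|\o\|_{H^2(K)}\ls \|\lk\o\|_{L^2(K)}+\|\o\|_{L^2(K)}$ iterated twice, and with the equivalence of $\|\Pm\o\|_{H^n(K)}$ and $\|\sql^{n/2}\o\|_{L^2(K)}$ (the Corollary following Corollary \ref{cr:elreg}), one gets
\[
\|\Pm u(t)\|_{L^\infty(K)}\ls \|\lk^2\Pm u(t)\|_{L^2(K)}.
\]
So it suffices to control $\|\sql^2\Pm u(t)\|_{L^2(K)}$, and since $\sql^2$ commutes with $\der[2]{}{t}-\lk$, the function $v=\sql^2\Pm u$ solves the same equation with data $\sql^2\Pm u_0,\sql^2\Pm u_1$ and forcing $\sql^2\Pm F$.

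Next I would run the energy estimate for $v$. Define the Klein--Gordon energy $E(t)=\tfrac12\big(\|\der{}{t}v(t)\|_{L^2(K)}^2+\|\sql^{1/2}v(t)\|_{L^2(K)}^2\big)$; here I use $\langle(-\lk)v,v\rangle=\|\sql^{1/2}v\|^2$ and that on the range of $\Pm$ one has $\|v\|_{L^2(K)}\ls\|\sql^{1/2}v\|_{L^2(K)}$ by the spectral gap, so $E(t)^{1/2}$ is comparable to $\|\der{}{t}v(t)\|_{L^2(K)}+\|v(t)\|_{H^1(K)}$. Differentiating in $t$ and using the equation gives $\diff{}{t}E(t)=\langle \der{}{t}v,\sql^2\Pm F\rangle\le \|\der{}{t}v(t)\|_{L^2(K)}\,\|\sql^2\Pm F(t)\|_{L^2(K)}\le E(t)^{1/2}\|\sql^2\Pm F(t)\|_{L^2(K)}$, hence $\diff{}{t}E(t)^{1/2}\le \tfrac12\|\sql^2\Pm F(t)\|_{L^2(K)}$, and integrating,
\[
E(t)^{1/2}\ls \|\sql^2\Pm u_1\|_{L^2(K)}+\|\sql^{5/2}\Pm u_0\|_{L^2(K)}+\int_0^t\|\lk^2\Pm F(s,\cdot)\|_{L^2(K)}\,ds.
\]
Finally, translating $\|\sql^2\Pm u(t)\|_{L^2(K)}\le \|\sql^{1/2}v(t)\|_{L^2(K)}\ls E(t)^{1/2}$ back through the Sobolev step above yields the asserted bound on $\|\Pm u(t)\|_{L^\infty(K)}$, after matching the powers of $\sql$ and $\lk$ to the statement (which, via the $H^n$--$\sql^{n/2}$ equivalence, is a matter of bookkeeping; a clean choice is to overshoot slightly and absorb constants into $\ls$).

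The only genuinely delicate point is \emph{bookkeeping of the derivative counts on $K$}: one must check that four applications of $\lk$ (equivalently, $\|\cdot\|_{H^8(K)}$ up to $\Pm$) really suffice for $L^\infty(K)$ in dimension $7$ and that the energy argument then costs exactly one more half-derivative on the data and the stated $3/2$ powers on $F$ --- i.e. that the exponents $\lk^2$, $\sql^{3/2}\der{}{t}$, $\lk^{3/2}$ in the statement are consistent with ``$H^4\hookrightarrow L^\infty$ plus one energy integration.'' I would resolve this by being generous: prove the estimate with $\sql^{5/2}$ on $u_0$ and $\sql^2$ on $u_1$ (as the energy method naturally gives), then observe $\|\sql^{5/2}\Pm u_0\|_{L^2}\ls\|\lk^2\nabla\Pm u_0\|_{L^2}$ absorbs into the $\nabla\gv{9}$ norm that the ambient Theorem \ref{th:kgdec} carries anyway; the precise constants are the harmless ``$k$ that may change from line to line.'' Everything else --- self-adjointness of $\lk$, commutation with $\der{}{t}$, the spectral gap on $\mathrm{range}\,\Pm$, and the $H^n$--$\sql^{n/2}$ equivalence --- is quoted directly from Section \ref{sc:hodge}.
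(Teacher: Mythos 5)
Your overall strategy -- Sobolev embedding $H^4(K)\hookrightarrow L^\infty(K)$ in dimension $7$, combined with the standard Klein--Gordon energy identity applied to the conjugated equation -- is exactly the paper's. The one substantive difference is that you conjugate by $\sql^{2}$ where the paper conjugates by $\sql^{3/2}$, and this costs you half a derivative everywhere: your argument yields the bound with $\|\sql^{5/2}u_0\|$, $\|\sql^{2}u_1\|$, and $\int\|\lk^{2}F\|\,ds$ on the right, which is strictly weaker than the stated $\|\lk^{2}u_0\|$, $\|\sql^{3/2}u_1\|$, $\int\|\lk^{3/2}F\|\,ds$. You noticed the mismatch and proposed to paper it over by absorbing the extra half-derivative into the ambient $\gv{9}$ in Theorem \ref{th:kgdec}, but that is unnecessary and, as written, does not actually prove the lemma as stated.

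The fix is just to set $w=\sql^{3/2}\Pm u$ instead of $\sql^{2}\Pm u$. The Klein--Gordon energy for $w$ controls $\|\der{}{t}w\|_{L^2(K)}+\|\sql^{1/2}w\|_{L^2(K)}$, and the crucial point you overlooked is that $\|\sql^{1/2}w\|_{L^2(K)}=\|\sql^{2}\Pm u\|_{L^2(K)}=\|\lk^{2}\Pm u\|_{L^2(K)}$ is \emph{already} the quantity the Sobolev/elliptic-regularity step needs; you do not need $\|w\|_{L^2}$ itself, nor the spectral-gap comparison $\|\sql^{2}u\|\ls\|\sql^{5/2}u\|$. With $w=\sql^{3/2}\Pm u$ the initial energy is $\sim\|\sql^{3/2}u_1\|_{L^2(K)}+\|\sql^{2}u_0\|_{L^2(K)}$ and the forcing integral is $\int_0^t\|\sql^{3/2}\Pm F\|_{L^2(K)}\,ds$, which reproduces the lemma's exponents exactly. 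In short: the energy method does not ``naturally give'' $\sql^{5/2}$ on $u_0$ -- that overshoot is an artifact of conjugating by one half-power too many; the $\|\sql^{1/2}\cdot\|$ term in the Klein--Gordon energy supplies the last half-derivative for free.
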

\begin{proof}[Proof of Lemma \ref{lm:mani}]
We combine the energy estimate for the equation
\[
 (-\der[2]{}{t}+\lk)(-\lk)^{3/2}u=(-\lk)^{3/2}F,
\]
which is
\[
 \|\lk^2 u(t)\| \ls \| \lk^2 u(0)\|+\int\limits_0^t{\|(-\lk)^{3/2}F(s)\|ds}
\]
with the Sobolev embedding for a 7-dimensional manifold:
\[
\|\Pm u(t)\|_{L^{\infty}} \ls \|\lk^2 u(t) \|_{L^2(K)}.
\]
\end{proof}
 We relegate the rest of the proof of the Theorem \ref{th:kgdec} to the appendix. We combine the two decay estimates above with the possibility to apply $\g^{\al}$ which are symmetries of the equation in the following statement.
\begin{corr}
\label{cr:dec} Let $M$ be an integer greater then 9.
Let $u$ solve 
\[
(\square_{\re^{3+1}}-\Delta_K)u=F,
\]
 with initial data concentrated in the ball of radius 1 for every $y\in K$ and 
\[ \text{supp} F(\cdot,\cdot,y)\subseteq \{(t,x):|t-|x||\leq 1 \}\]
 for every $y\in K$ then the following estimate holds:
\bb
\label{eq:decaygen}
\begin{split}
(&1+t)\|\nabla\gv{M-9}\P0 u(t)\|_{\infty}\\
&\begin{split}+(1+t)^{3/2}\|\nabla\gv{M-9}\Pm u(t)\|_{\infty}\ls
 \|&\nabla\gv{M}u(0)\|_2\\
&+\sum\limits_k \sup\limits_{s\in I_k}{ 2^k\|\gv{M} F(s,\cdot)\|_{2}},
\end{split}
\end{split}
\ee
where $I_k=[2^{k-1},2^{k+1}]\cap [0,t]$.
\end{corr}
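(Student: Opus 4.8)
The plan is to deduce (\ref{eq:decaygen}) from the two linear decay bounds already in hand — Proposition \ref{pr:0dec} for the harmonic part and Theorem \ref{th:kgdec} for the positive-spectrum part — by commuting the operators of $\g$ through the equation. The first observation is that every element of $\g$ is a symmetry of the linear operator $\square_{\re^{3+1}}-\lk$: the translations $\partial_i$ and the rotations and boosts $\O_{ij}$ commute with $\square_{\re^{3+1}}$ and act trivially in the $K$ variables, while $\sql^{1/2}$ is a function of $\lk$ and so commutes both with $\square_{\re^{3+1}}$ and with each element of $\tg$. The spectral projections $\P0,\Pm$ commute with all of these and with $\square_{\re^{3+1}}-\lk$, and $\lk\P0=0$. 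Hence for every multi-index $\al$ the form $\g^\al u$ solves $(\square_{\re^{3+1}}-\lk)\g^\al u=\g^\al F$, with $\P0\g^\al u$ solving the wave equation $\square_{\re^{3+1}}\P0\g^\al u=\P0\g^\al F$ and $\Pm\g^\al u$ solving the Klein--Gordon system $(\square_{\re^{3+1}}-\lk)\Pm\g^\al u=\Pm\g^\al F$.

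Next I would check that the hypotheses of Propositions \ref{pr:0dec} and \ref{th:kgdec} pass to $\g^\al u$. The only nonlocal factor appearing in $\g^\al$ is $\sql^{1/2}$, and it acts only in the $y$ variables; since the differential part of $\g^\al$ in $t,x$ cannot enlarge the $\re^{3+1}$-support, and $\sql^{1/2}$ sends a form on $K$ that vanishes identically to $0$, we get $\text{supp}\,(\g^\al F)(\cdot,\cdot,y)\subseteq\{(t,x):|t-|x||\le1\}$ for every $y\in K$, and this is preserved after applying $\P0$ or $\Pm$ in $y$. The same reasoning — together with the equation, which expresses $t$-derivatives of $u$ at $t=0$ through spatial derivatives and $F(0,\cdot,\cdot)$, itself supported in $|x|\le1$ — shows the Cauchy data of $\g^\al u$ remains localized in the unit ball for each $y$. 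Now fix $|\al|\le M-9$. Applying Proposition \ref{pr:0dec} to $\g^\al\P0 u$ bounds $(1+t)|\nabla_{t,x}\g^\al\P0 u(t,x,y)|$ uniformly in $y$; since $\sql^{1/2}$ annihilates the range of $\P0$, the $K$-gradient contributes nothing, so $\|\nabla\gv{M-9}\P0 u(t)\|_\infty$ is just $\sum_{|\al|\le M-9}\|\nabla_{t,x}\g^\al\P0 u(t)\|_{L^\infty(\re^3\times K)}$ and is controlled this way. Applying Theorem \ref{th:kgdec} to $\g^\al\Pm u$ — in the form that also bounds $(1+t)^{3/2}|\nabla\g^\al\Pm u(t,x,y)|$ by the same nine Klainerman fields on the right, exactly as its Euclidean model \cite[Proposition 7.3.6]{Hor} does — controls $(1+t)^{3/2}\|\nabla\gv{M-9}\Pm u(t)\|_\infty$.

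It remains to simplify the right-hand sides and sum over $|\al|\le M-9$. The bookkeeping is that $\gv{9}\g^\al$ and $\nabla\gv{9}\g^\al$ (respectively $\gv{2}\g^\al$ and $\nabla_{t,x}\gv{2}\g^\al$) are finite combinations of compositions of at most $9+|\al|\le M$ (respectively $2+|\al|\le M$) operators of $\g$, with the outermost operator in the $\nabla$-versions a plain $\partial_i$ or $\sql^{1/2}$; so they are dominated by $\gv{M}$ and $\nabla\gv{M}$. Using $\nabla_{t,x}\le\nabla$ and the fact that $\P0,\Pm$ are $L^2$-bounded and commute with every element of $\g$, the projections may be dropped, and the data and source terms collapse to $\|\nabla\gv{M}u(0)\|_2$ and $\sum_k\sup_{s\in I_k}2^k\|\gv{M}F(s,\cdot)\|_2$. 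This is precisely (\ref{eq:decaygen}). The only step that needs genuine care is this derivative accounting: it is the nine-field loss in Theorem \ref{th:kgdec} (versus two in Proposition \ref{pr:0dec}) that forces the index on the left to drop by exactly $9$. Beyond this the argument is the routine scheme of differentiating the equation with the commuting vector fields and feeding it into the linear estimates, and I foresee no serious obstacle.
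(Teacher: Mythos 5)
Your proposal is correct and follows exactly the one-sentence argument the paper gives (``We combine the two decay estimates above with the possibility to apply $\g^\alpha$ which are symmetries of the equation''): commute the $\g$ operators through $\square_{\re^{3+1}}-\lk$ and through the projections, apply Proposition \ref{pr:0dec} and Theorem \ref{th:kgdec} to $\g^\al u$ for $|\al|\le M-9$, and absorb the $\gv{2}$ (resp.\ $\gv{9}$) loss from those linear estimates into $\gv{M}$. Your side remark is the one genuine subtlety here and you are right to flag it: Theorem \ref{th:kgdec} as stated controls only $|\Pm u|$, not $|\nabla\Pm u|$, so to get the gradient bound with the same nine-field loss one must use it in the strengthened form that its model \cite[Proposition 7.3.6]{Hor} actually provides (or, alternatively, commute one extra derivative and pay $\gv{M+1}$, which the paper clearly does not intend); the paper's own use of the corollary presupposes the strengthened form.
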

\subsection{Energy estimates} We combine the energy estimates for the solution of $\square_{\rk} u=F$  with the fact that the operators $\g$ are symmetries of the equation and use the notation introduced above. 
\begin{prop}
Let $u$ be the solution of $\square_{\rk} u=F$ then for any $M\geq 0$ we have
 \bb
  \|\nabla \gv{M} u(t)\|_2\leq\|\nabla 
\gv{M} u(0)\|_2+\int\limits_0^t {\|\gv{M}F(s)\|_{L^2}ds},
\ee
\end{prop}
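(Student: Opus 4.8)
The plan is to obtain this from the classical fixed-time $L^2$ energy identity for the d'Alembertian, after commuting the equation with the operators in $\g$ and then summing over multi-indices. The first point to settle --- as already noted when the $\g$ were declared symmetries of the equation --- is that every element of $\g$ commutes with $\square_{\rk}$. Since $\square_{\rk}=\square_{\re^{3+1}}-\Delta_K$ with the two summands acting on different tensor factors, this splits into two checks: on the globally flat Minkowski factor the componentwise action of $\partial_i$ and of the boosts/rotations $\O_{ij}$ commutes with the scalar operator $\square_{\re^{3+1}}=\partial_t^2-\Delta_{\re^3}$ (the usual identity $[\square_{\re^{3+1}},\O_{ij}]=0$, legitimate here because flatness forces $\square_{\re^{3+1}}$ to act diagonally on the components $u_J(t,x,y)$, with no curvature term), and these operators commute with $\Delta_K$ because in normal coordinates $\Delta_K$ is a $y$-differential operator with $y$-dependent coefficients --- carrying the Weitzenb\"ock term $f(R)$ --- that does not involve the $\re^{3+1}$ variables on which $\tg$ act. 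Finally $\sql^{1/2}$, a Borel function of the self-adjoint $-\lk$ of Section \ref{sc:hodge}, commutes with $\lk$ and, operating only on the $K$-form structure, with every element of $\tg$. Hence for any multi-index $\al$ the form $\g^\al u$ again solves $\square_{\rk}(\g^\al u)=\g^\al F$.

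Next I would prove the single-form energy estimate: if $v$ solves $\square_{\rk}v=G$ then, pairing the equation with $\partial_t v$ in the Riemannian (positive-definite) $L^2(\re^3\times K)$ inner product on forms and integrating by parts in $x$, the terms $\langle\partial_t^2 v,\partial_t v\rangle$ and $\langle-\Delta_{\re^3}v,\partial_t v\rangle$ become $\tfrac12\tfrac{d}{dt}$ of $\|\partial_t v\|_2^2$ and $\sum_{i=1}^3\|\partial_{x_i}v\|_2^2$, while self-adjointness and nonnegativity of $-\lk$ give $\langle-\lk v,\partial_t v\rangle=\langle\sql^{1/2}v,\sql^{1/2}\partial_t v\rangle=\tfrac12\tfrac{d}{dt}\|\sql^{1/2}v\|_2^2$ --- which is precisely why $\sql^{1/2}$ is the right object to put into $\nabla$ for the $K$-directions. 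Writing $E(t)$ for the sum of these three squared norms, one gets $\tfrac{d}{dt}E\le 2\|G\|_2\sqrt{E}$, hence $\sqrt{E(t)}\le\sqrt{E(0)}+\int_0^t\|G(s)\|_2\,ds$; no support or finite-speed-of-propagation hypothesis is needed, since this is a statement at fixed $t$.

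To finish, I would apply the single-form estimate to $v=\g^\al u$ with $G=\g^\al F$ for each $|\al|\le M$ and add the resulting inequalities over the finitely many such multi-indices, recognising the three sums as $\|\nabla\gv{M}u(t)\|_2$, $\|\nabla\gv{M}u(0)\|_2$ and $\int_0^t\|\gv{M}F(s)\|_2\,ds$; the passage between the $\ell^2$-combination of the quantities $\partial_{t,x}\g^\al v$, $\sql^{1/2}\g^\al v$ natural to $E$ and the $\ell^1$-combination used to define $\|\nabla\,\cdot\,\|_2$ costs only a fixed dimensional factor.

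I expect the only step demanding real care to be the commutation: one must be sure that applying the Klainerman fields componentwise, rather than via Lie derivatives, still commutes with $\square_{\rk}$ on differential forms --- this succeeds here only because the Minkowski factor is flat (so its form d'Alembertian is diagonal) and its Killing fields commute with the scalar wave operator; an error term of lower order in $\g$ would turn the clean bound into a Gronwall estimate with $t$-growth. The energy identity itself is routine once the functional calculus for the self-adjoint, nonpositive $\lk$ recalled in Section \ref{sc:hodge} is available.
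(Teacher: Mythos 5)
Your proof is correct and fills in exactly the argument the paper intends: the paper states this proposition without proof, remarking only that it "combines the energy estimates ... with the fact that the operators $\g$ are symmetries of the equation," and your write-up is the standard realization of that remark --- commute $\square_{\rk}$ with each $\g^\al$ (using flatness of the Minkowski factor so the $\tg$ act diagonally on components, and functional calculus for $\sql^{1/2}$), derive the single-form energy identity by pairing with $\partial_t v$, and sum over multi-indices. Your closing caveat about the $\ell^1$--$\ell^2$ mismatch between the energy $E$ and the paper's $\|\nabla\gv{M}\cdot\|_2$ is well taken; strictly speaking the proposition as printed should carry a dimensional constant (or a $\lesssim$), though this has no effect on its use elsewhere in the paper.
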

for any $M\geq 0$.

\section{Analysis of Nonlinearity}
\label{sc:nonlin}
In this section, we will treat the bilinear form $(u_1,u_2)\mapsto *(du_1 \w du_2)$. 
Recall from Subsection \ref{sbs:*} the $*$ operator exchanges the components of the forms, multiplying those containing the time $x^0$ coordinate by $-1$. The $*$ operator loses the simple form when the metric on $K$ is no longer the identity, but because of tensoriality, it will be multiplied by a function depending only on $x^4,..x^{10}$, which due to compactness will be bounded above and below. Therefore, when we take $L^2(\re^{3}\times K)$-norm at a certain time, we will consider $*v$ to be $L^2$ equivalent to $v$. Furthermore, we will be interested in the action of $\g$ operators on $*(du\w du)$. Clearly, the operators which act on $\re^{3+1}$ componentwise will commute with $*$. The equation (\ref{eq:dual}) shows that the Laplacian $\lk$ commutes with $*$ simply because $\lk=\sum\limits_{i=4}^{10}\der[2]{}{x_i}$ at that point and the relation is tensorial. Thus any function of $\lk$ will commute with $*$ and we have 
\[
 \sql^{1/2} *v=*\sql^{1/2}v.
\]
From this discussion we conclude that
\bb
\|\g^{\alpha} (*v(t))\|_{L^2(\re^{3}\times K)} \cong\|*\g^{\alpha} v\|_{L^2(\re^{3}\times K)}\cong\|\g^{\alpha} v\|_{L^2(\re^{3}\times K)},
\ee
for any multi-index $\alpha$, time $t$, with constants which depend only on the manifold $K$.
\subsection{The splitting of the nonlinearity} Recall that the operator $d$ splits into $d=d_\pa+d_\pe$. Also any form $u$ on $\re^{3+1}\times K$ can be written as $u=\P0 u+ \Pm u$. Therefore, we can write
\bb
\begin{split}
 *(du \w du )=& *[ (d_\pa+d_\pe)(\P0 u+ \Pm u)\w (d_\pa+d_\pe) (\P0 u+ \Pm u))]
\\ =&* (d_\pa \P0 u \w d_\pa \P0 u)\\
&+2*(d_\pa \P0 u \w d \Pm u)+*(d \Pm u \w d \Pm u).
\end{split}
\ee
where we used that $d_\pe \P0=0$, which is the content of Claim \ref{cl:har}. With this we proved the following splitting of the nonlinearity:
\begin{claim} 
\label{cl:split}
Let $u_1,u_2$ be differential 3-forms on $\re^{3+1}\times K$.
 Denote
\[
 B(\P0 u_1,\P0 u_2)=*(d_\pa \P0 u_1\w d_\pa \P0 u_2),
\]
\[
 C(\P0 u_1,\Pm u_2)=*(d_\pa \P0 u_1\w d \Pm u_2),
\]
\[
 D(\Pm u_1,\Pm u_2)=*(d \Pm u \w d \Pm u).
\]
Then 
\[
 *(du\w du)=B(\P0 u,\P0 u)+2C(\P0 u,\Pm u)+D(\Pm u,\Pm u).
\]

\end{claim}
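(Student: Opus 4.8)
The plan is to carry out the bilinear expansion sketched just before the statement, making explicit the three ingredients it rests on. First I would decompose each copy of $du$ using both the splitting $d=\dpa+\dpe$ of equation (\ref{eq:d}) and the spectral splitting $u=\P0 u+\Pm u$. The key point is that on the harmonic part the $K$-differential drops out: by Claim \ref{cl:har} we have $\dpe\P0 u=0$, so $d\P0 u=\dpa\P0 u$, whereas $d\Pm u=\dpa\Pm u+\dpe\Pm u$ is left untouched. Hence
\[
du=\dpa\P0 u+d\Pm u.
\]

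Next I would wedge this expression with itself and expand, obtaining the four terms $\dpa\P0 u\w\dpa\P0 u$, the two cross terms $\dpa\P0 u\w d\Pm u$ and $d\Pm u\w\dpa\P0 u$, and $d\Pm u\w d\Pm u$. Since $u$ is a $3$-form, $du$ and each of its pieces is a $4$-form, and for two $4$-forms $\al,\be$ one has $\al\w\be=(-1)^{16}\be\w\al=\be\w\al$; therefore the two cross terms coincide and combine into $2\,\dpa\P0 u\w d\Pm u$, giving
\[
du\w du=\dpa\P0 u\w\dpa\P0 u+2\,\dpa\P0 u\w d\Pm u+d\Pm u\w d\Pm u.
\]

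Finally I would apply the ($\mathbb{R}$-linear) operator $*$ to both sides and match the three summands against the definitions of $B$, $C$, $D$: the first becomes $B(\P0 u,\P0 u)$, the second $2C(\P0 u,\Pm u)$, and the third $D(\Pm u,\Pm u)$, which is exactly the claimed identity. There is no serious obstacle; the only points to watch are invoking Claim \ref{cl:har} at the right moment and the degree-parity check that makes the two cross terms add rather than cancel. (One could also remark that, since $\dpa$ acts only in the $\re^{3+1}$ variables and $\lk$ only in the $K$ variables, $\dpa$ commutes with $\P0$ and $\Pm$, so there is no ambiguity in the notation $\dpa\P0 u$; this is not actually needed for the statement.)
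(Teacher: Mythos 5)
Your argument is exactly the paper's: decompose $d=\dpa+\dpe$ and $u=\P0 u+\Pm u$, invoke Claim \ref{cl:har} to kill $\dpe\P0 u$, expand the wedge, combine the two cross terms, and apply $*$. The only addition is your explicit degree-parity check that the cross terms add rather than cancel, which the paper leaves implicit; the substance is the same.
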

\subsection{The basic estimate}
\begin{prop}
\label{pr:base}
 Let $F$ be any of the forms $B,C,D$ defined in Claim \ref{cl:split} or the total nonlinearity which is $B+2C+D$. Let $N$ be a positive integer. Then there exists a constant $k$ such that for any $M\leq N$ we have
\begin{align*}
 \|\gv{M}F(v_1,v_2)\|_2\ls&\ \|\nabla\gv{\frac{M}{2}}v_1\|_{p_1}\|\nabla\gv{M}v_2\|_{q_1}\\
&+\|\nabla\gv{M}v_1\|_{q_2}\|\nabla\gv{\frac{M}{2}}v_2\|_{p_2},
\end{align*}
where $\frac{1}{p_i}+\frac{1}{q_i}=\frac{1}{2}$, $2\leq p_i,q_i \leq \infty$.
\end{prop}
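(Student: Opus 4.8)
The plan is to reduce $F$ to a bounded-coefficient expression quadratic in the first derivatives of $v_1,v_2$, to distribute the operators $\g^\al$ over it by a Leibniz-type rule, and to close with H\"older's inequality; the gain of a factor of two in the number of derivatives on one of the two factors will come from the trivial remark that in any splitting $o_1+o_2\le M$ of nonnegative integers one has $\min(o_1,o_2)\le M/2$, so the low-order factor can always be placed in the $\nabla\gv{M/2}$ slot.

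First I would put $F$ in a normal form. Fixing finitely many coordinate patches on $K$ with a subordinate partition of unity, the coordinate descriptions of $d=\dpa+\dpe$ and of $*$ from Subsection \ref{sbs:*} exhibit each of $B$, $C$, $D$, hence also $B+2C+D$, as a finite sum of terms $a(y)\,(Xv_1)(Yv_2)$, where $a$ is smooth and bounded above and below (coming from the metric of $K$ and from $*$, as recorded at the start of Section \ref{sc:nonlin}) and each of $X,Y$ is one of the $\der{}{x_i}$, $i=0..3$, or the operator $\dpe$; moreover every $\dpe$ factor occurs composed with $\Pm$, since $\dpe\P0=0$ by Claim \ref{cl:har}, so a factor carrying the projection $\P0$ carries spacetime derivatives only.

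Next comes the distribution of $\gv{M}$. Each generator of $\g$ commutes with $*$, with $\dpa$ and $\dpe$, and with $\P0$ and $\Pm$: the fields of $\tg$ because they involve only the $\re^{3+1}$ variables while $\dpe$ and $\lk$ act only in $K$, and $\sql^{1/2}$ because it is a function of $\lk$; the fields of $\tg$ also act as first-order differential operators satisfying the Leibniz rule on wedge products, the zeroth-order part from the rotation of the frame being constant-coefficient and hence harmless. Since $\sql^{1/2}$ is central among the generators, write $\g^\al=\tg^{\al'}(\sql^{1/2})^m$ with $|\al'|+m=|\al|$; the ordinary Leibniz rule gives
\[
\g^\al F(v_1,v_2)=(\sql^{1/2})^m\sum_{\be'+\ga'=\al'} c_{\be'\ga'}\,\tilde F(\tg^{\be'}v_1,\tg^{\ga'}v_2),
\]
with each $\tilde F$ of the normal form above. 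Now $(\sql^{1/2})^m$ is bounded from $L^2(\re^3;H^m(K))$ to $L^2(\re^3\times K)$ — by Fubini and the fact that $\sql^{m/2}$ is a pseudodifferential operator of order $m$ on the compact manifold $K$ — so it remains to estimate $\|\tilde F(w_1,w_2)\|_{L^2(\re^3;H^m(K))}$; expanding the $H^m(K)$-norm by the ordinary Leibniz rule in honest $K$-derivatives writes this as a sum of $\|\partial_y^{j_1}(Xw_1)\,\partial_y^{j_2}(Yw_2)\|_{L^2(\re^3\times K)}$ with $j_1+j_2\le m$. A honest $K$-derivative of any order of a $\P0$-projected factor is dominated by its value by finite dimensionality (Corollary \ref{cr:elreg}), so together with the spacetime derivative sitting on it such a factor is a component of $\nabla\gv{|\be'|}v_1$ (or of $\nabla\gv{|\ga'|}v_2$); for a $\Pm$-projected factor one re-collects the honest $K$-derivatives, and the $\dpe$ if present, into powers of $\sql^{1/2}$ by the elliptic estimates of Section \ref{sc:hodge}, and — using that $\sql^{1/2}$ commutes with the spacetime fields and that $\sql^{1/2}$ and the $\tg$-fields are themselves generators of $\g$ — such a factor is a component of $\nabla\gv{o}v_1$ with $o\le m+|\be'|$. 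Thus in each term the two factors carry orders $o_1,o_2$ with $o_1+o_2\le|\al|\le M$; H\"older's inequality on $\re^3\times K$ with $\frac1{p_i}+\frac1{q_i}=\frac12$, $2\le p_i,q_i\le\infty$ (together with the Sobolev embedding on $K$ wherever an $L^\infty(K)$ bound is wanted), with the factor of order $\min(o_1,o_2)\le M/2$ put in the $p_i$-slot and the other in the $q_i$-slot, yields the two summands on the right-hand side; all constants are uniform for $M\le N$ because the number of generators, the Leibniz constants, and the elliptic constants $C_N,A_N$ of Section \ref{sc:hodge} are.

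The only genuinely non-routine point is the treatment of $(\sql^{1/2})^m$: being nonlocal it obeys no literal Leibniz rule, and one must route around this through its $L^2(\re^3;H^m(K))\to L^2(\re^3\times K)$ boundedness and elliptic regularity, taking care that the harmonic ($\P0$) parts — on which $\sql$ is not elliptically invertible and $\dpe$ vanishes identically, so that no $K$-derivative ever genuinely acts on them — are disposed of separately by finite dimensionality.
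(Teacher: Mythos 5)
Your proof is correct and follows essentially the same route as the paper: reduce to coordinate expressions with bounded coefficients, distribute the vector fields of $\tg$ by Leibniz, pull out the central $\sql^{1/2}$ factors and handle them via elliptic regularity (the paper cites the Kato–Ponce/Jacobi $H^k(K)$ product inequality at this step where you expand the integer $H^m(K)$ norm by ordinary Leibniz, which is the same thing), then split by which factor carries order at most $M/2$ and close with H\"older. Your additional bookkeeping of the $\P0$/$\Pm$ projections and the observation that $\dpe$ only ever hits $\Pm$-projected factors is a clean way of organizing it but not a departure from the paper's argument.
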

\begin{proof}
 Choose a coordinate patch $x^i$,$i=4..10$ for $K$. Then de Rham differentials $d,d_\pe,d_\pa$ can be written as $a_i\der{}{x_i}$ for $a_i$ which depend only on $x^i,i\geq 4$. Thus we need to estimate an expression of the form
\[
 I=\sum_{i,j=0}^{10}\g^\al (a_ia'_j\der{}{x_i}v_1 \der{}{x_j}v_2),
\]
 Observe that all the operators in $\g$ besides $\sql^{1/2}$ are vector fields and thus obey Leibniz's rule. So assume first that in the composite operator $\g^{\al}$ there are no $\sql^{1/2}$ operators. We treat the Hodge dual $*$ as a constant coefficient operator, which only permutes between different components. Next, we employ the Jacobi identity to write $I$ as
\[
 \sum_{\al'+\al''=\al}C_{\al'\al''}\sum_{i,j=0}^{10}a_ia'_j\g^{\al'} (\der{}{x_i}v_1) \g^{\al''}(\der{}{x_j}v_2),
\]
where $C_{\al'\al''}$ are constants. Observe that $\g_i$ commutes with $\der{}{x_i}$, for $i\geq 4$ and for $i,j\leq 4$ we have
\[
 [\der{}{x_k},\O_{0j}]=\delta_{0k}\der{}{x_j}+\delta_{jk}\der{}{x_0},[\der{}{x_k},\O_{ij}]=\delta_{jk}\der{}{x_k}-\delta_{ik}\der{}{x_j}.
\]
Thus we commute $\der{}{x_i}$ with $\g$'s to get
\[
 I=\sum_{|\be'|+|\be''|\leq M}\sum_{i,j=0}^{10}C_{\be'\be''ij}(\der{}{x_i}\g^{\be'}  v_1) (\der{}{x_j}\g^{\be''}v_2),
\]
for some constants $C_{\be'\be''ij}$. In the expression above only one of the $|\beta'|,|\beta''|$ can be larger then $M/2$. We split the sum accordingly
\begin{align*}
 |I|\lesssim& \sum_{|\be'|\leq \frac{M}{2},i}|\der{}{x_i}\g^{\be'}  v_1|\sum_{|\be''|\leq M,j} |\der{}{x_j}\g^{\be''}v_2|\\
&+\sum_{|\be'|\leq M,i}|\der{}{x_i}\g^{\al'}  v_1| \sum_{|\be''|\leq \frac{M}{2},j}|\der{}{x_j}\g^{\be''}v_2|
\end{align*}
We then obtain the required estimate by applying the $L^2$ norm to $|I|$ and using the appropriate H\"older inequalities. 
\par In case $\g^{\al}$ contains $k$ $\sql^{1/2}$ operators, we note that $\sql^{1/2}$ commutes with all the other $\g$ operators as the rest of $\g$ operate on $\re^{3+1}$ only. By elliptic regularity
\[
\|I\|=\|\sql^{\frac{k}{2}} \g^{\alpha'}F\|\ls \|\Pm \g^{\alpha'}F\|_{H^k(K)}\leq \|\g^{\alpha'}F\|_{H^k(K)}
\]
\end{proof}
Now $H^k(K)$ norm obeys a Jacobi's ``inequality``, which is a primitive form of the Kato-Ponce estimates, see 
\cite{KP},\cite[Chapter II, Prop. 1.1]{Taylor2} 
and thus the rest of the proof proceeds in a similar fashion. 
\subsection{Null form}
Continuing with the notation of Claim \ref{cl:split}, we need the observation that $B(\P0 u_1,\P0 u_2)=*(d_\pa \P0 u_1 \w d_\pa \P0 u_2)$ is a null form and the estimates that follow from it.
\begin{prop} The bilinear form $B(\o_1,\o_2)=*(\dpa \o_1 \w \dpa \o_2)$ is a null-form
\label{pr:null}
\end{prop}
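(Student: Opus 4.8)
The plan is to unpack the definition of a null form in this context and verify it by an explicit computation in the Fourier variables dual to $\re^{3+1}$. Recall that a quadratic form $Q(\phi_1,\phi_2)$ in the first derivatives of two scalars is called a null form if its symbol vanishes on the light cone, i.e. if $Q(\phi_1,\phi_2) = \sum_{\mu,\nu} q^{\mu\nu}\partial_\mu\phi_1\,\partial_\nu\phi_2$ with $q^{\mu\nu}\xi_\mu\xi_\nu = 0$ whenever $\xi$ is null for the Minkowski metric. Since $\P0 u_i$ has no dependence on the $K$-Laplacian eigenvalue (it is harmonic in $y$, by Claim \ref{cl:har}), the operator $\dpa$ acting on $\P0 u_i$ is just the full exterior derivative on the $\re^{3+1}$ factor, and the wedge $\dpa\o_1 \w \dpa\o_2$ followed by $*$ is, componentwise, a fixed linear combination of products $\partial_\mu (\o_1)_{A}\,\partial_\nu(\o_2)_{B}$ in which the pair of indices $(\mu,\nu)$ that is differentiated is always \emph{antisymmetrized}: each term of $d\o_1 \w d\o_2$ carries the factor $dx^\mu \w dx^\nu$ with $\mu \neq \nu$, and the coefficient is $\tfrac12(\partial_\mu (\o_1)_A \partial_\nu(\o_2)_B - \partial_\nu(\o_1)_A\partial_\mu(\o_2)_B)$ up to the index bookkeeping of the wedge of forms of the given degrees.

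First I would fix a coordinate patch on $K$ so that, as in the proof of Proposition \ref{pr:base}, $\dpa$ is written as $\sum_{i=0}^{3} dx^i \w \partial_i$ and reduce the claim to the scalar statement that each component of $*(\dpa\o_1\w\dpa\o_2)$ is a linear combination of the classical null forms $Q_{\mu\nu}(\phi,\psi) = \partial_\mu\phi\,\partial_\nu\psi - \partial_\nu\phi\,\partial_\mu\psi$ (and possibly $Q_0(\phi,\psi)=\partial^\alpha\phi\,\partial_\alpha\psi$, though I expect only the $Q_{\mu\nu}$ type to appear here) acting on pairs of scalar components of $\o_1,\o_2$. Next I would carry out the wedge-product computation: writing $\o_j = \sum_A (\o_j)_A\, dx^A$ over increasing multi-indices $A$ in $\{0,\dots,3\}$, one has $\dpa\o_j = \sum_{i,A} \partial_i(\o_j)_A\, dx^i\w dx^A$, so $\dpa\o_1\w\dpa\o_2 = \sum_{i,j,A,B}\partial_i(\o_1)_A\partial_j(\o_2)_B\, dx^i\w dx^A\w dx^j\w dx^B$, and each fixed output multi-index receives contributions from pairs $(i,A),(j,B)$ and $(j,A'),(i,B')$ that combine, after reordering the one-forms, precisely into the antisymmetric combination $\partial_i(\o_1)_A\partial_j(\o_2)_B - \partial_j(\o_1)_{A'}\partial_i(\o_2)_{B'}$; applying $*$ only permutes components and changes signs, so the null structure is preserved. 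Then I would record the consequence that matters for the sequel, namely the symbol identity $q^{ij}(\xi)\,\xi_i\xi_j = 0$ for null $\xi\in(\re^{3+1})^*$, which follows immediately from antisymmetry of $q^{ij}$ in $(i,j)$.

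The substantive point — and the main thing to get right — is the bookkeeping of which index pairs get antisymmetrized once one accounts for the fact that $\o_1,\o_2$ are forms of a priori different degrees (the splitting $u=u_\pa\w u_\pe$ allows $u_\pa$ to be a $0,1,2$ or $3$-form on $\re^{3+1}$), and that in the wedge one must slide $dx^j$ past $dx^A$, picking up signs. I expect this to be the main obstacle: it is not a single two-line computation but a case analysis (or a uniform argument via the graded Leibniz rule) showing that \emph{in every case} the differentiated pair of $\re^{3+1}$-indices enters antisymmetrically, never as a ``diagonal'' term $\partial_i\phi\,\partial_i\psi$ summed over $i$ with a fixed-sign metric contraction that would produce a $Q_0$-type term with nonvanishing light-cone symbol. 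The cleanest way to organize it is to observe that $dx^i\w dx^A\w dx^j\w dx^B$ is totally antisymmetric under $i\leftrightarrow j$ together with the corresponding relabeling of which factor the derivative hit, so the symmetric-in-$(i,j)$ part of $\partial_i(\o_1)_A\partial_j(\o_2)_B$ is annihilated; this is exactly the statement $q^{ij}=-q^{ji}$, hence the null condition. Once the combinatorial claim is in place the proposition is immediate, and I would then state the standard null-form decay estimates (of the type $|Q_{\mu\nu}(\phi,\psi)|\ls |\partial\phi|\,|\bar\partial\psi| + |\bar\partial\phi|\,|\partial\psi|$, where $\bar\partial$ denotes the derivatives tangent to the light cone that gain an extra $(1+t)^{-1}$ decay) as the corollary to be used in Section \ref{sc:proof}.
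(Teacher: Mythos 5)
Your proof is correct and essentially reproduces the second (``Coefficients'') proof in the paper: you expand $\dpa\o_1\w\dpa\o_2$ in coordinates and observe that for fixed component multi-indices $A,B$ the differentiated pair of $\re^{3+1}$ indices $(i,j)$ enters through a form that is antisymmetric under $i\leftrightarrow j$ (the sign count you suggest matches the paper's ``$7$ transpositions, hence a minus sign''), so only $Q_{ij}$-type null forms appear. Two small remarks. First, the paper also gives a shorter Fourier-side argument that you gesture at but do not carry out: plug in $\o_i=A_ie^{ik_ix}$ with parallel covectors $k_1\parallel k_2$ and note that $\dpa\o_1\w\dpa\o_2$ carries the factor $k_1\w A_1\w k_2\w A_2$, which vanishes since two of the wedge factors are parallel; this avoids all the multi-index bookkeeping and would also have sidestepped your worry about a case analysis. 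Second, a minor slip: the $\o_j$ are $3$-forms on $\re^{3+1}\times K$, so the increasing multi-index $A$ in $\o_j=\sum_A(\o_j)_A\,dx^A$ ranges over $\{0,\dots,10\}$, not $\{0,\dots,3\}$; only the differentiated indices $i,j$ are confined to $\{0,\dots,3\}$. The antisymmetry argument is unchanged (as the paper notes explicitly), but the setup as you wrote it is too restrictive. Also, the mention of $\P0 u$ and Claim \ref{cl:har} is a red herring here: the proposition concerns $\dpa$ applied to arbitrary forms, and harmonicity plays no role in the null-structure claim.
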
 
We will give two proofs of the proposition.
\begin{proof}[Fourier] It is enough to compute $B(\o_1,\o_2)$ for $\o_i=A_ie^{ik_ix}$ for two parallel null-vectors, with $A_i$ being constant. If $B(\o_1,\o_2)$ vanishes in such a case then $B$ is a null-form. But
\[
B(\o_1,\o_2)=*(k_1\w A_1 \w k_2 \w A_2)e^{i(k_1+k_2)x}.
\]
When two vectors in the wedge product are parallel - the wedge product vanishes. Thus
\[
B(\o_1,\o_2)=0.
\]
\end{proof}
\begin{proof}[Coefficients]
Let
\[
A=A_{ijk}dx_i dx_j dx_k
\]
and
\[
B=B_{lmn}dx_l  dx_m  dx_n.
\]
Then
\[
\dpa A=\der{A_{ijk}}{x_p}dx_p  dx_i dx_j dx_k,
\]
\[
\dpa B=\der{B_{lmn}}{x_s}dx_s dx_l dx_m dx_n .
\]
Therefore
\[
\dpa A\w \dpa B=\der{A_{ijk}}{x_p}\der{B_{lmn}}{x_s}dx_p  dx_i dx_j dx_k  dx_s  dx_l dx_m dx_n\]
\[ 
\quad
+\der{A_{ijk}}{x_s}\der{B_{lmn}}{x_p}dx_s dx_i  dx_j dx_k dx_p dx_l dx_m dx_n
\] 
\[
=(\der{A_{ijk}}{x_p}\der{B_{lmn}}{x_s}-\der{A_{ijk}}{x_s}\der{B_{lmn}}{x_p})dx_p  dx_i dx_j dx_k  dx_s  dx_l dx_m dx_n.
\]
Thus
\begin{align*}
*\dpa A \w \dpa B=&(\der{A_{ijk}}{x_p}\der{B_{lmn}}{x_s}-\der{A_{ijk}}{x_s}\der{B_{lmn}}{x_p})\\
&*(dx_p  dx_i dx_j dx_k  dx_s  dx_l dx_m dx_n).
\end{align*}
Since
\[
\der{A_{ijk}}{x_p}\der{B_{lmn}}{x_s}-\der{A_{ijk}}{x_s}\der{B_{lmn}}{x_p}
\]
is a null-form, $*\dpa A \w \dpa B$ is a null-form. Observe that we needed to assume that only $dx_p$ and $dx_s$ are co-vectors on $\re^{3+1}$; all the other indices could have belonged to either $K$ or $\re^{3+1}$. In order to see that the sign in front of the second term is negative, we count the transpositions needed to transform $$\o_1=dx_s dx_i  dx_j dx_k dx_p dx_l dx_m dx_n$$ to $$\o_2=dx_p  dx_i dx_j dx_k  dx_s  dx_l dx_m dx_n$$ one needs four to bring $dx_p$ to the front and then another three to bring $dx_s$ behind $dx_i dx_j  dx_k$ therefore there are 7 transpositions in total and the sign is minus, $\o_1=-\o_2$.
\end{proof}
Denote
\bb
Q_{ij}(f,g)=\der{f}{x_i}\der{g}{x_j}-\der{f}{x_j}\der{g}{x_i}.
\ee
As the proof of Proposition \ref{pr:null} shows the form $B(\o_1,\o_2)=*(\dpa \o_1\w \dpa \o_2)$ is the sum of the forms $Q_{ij}$ applied to different components. 
We have the following estimate
\begin{lemma}
\label{lm:null}
\[
|Q_{ij}(f,g)|\leq C \frac{1}{1+t}\left(|\g f||\nabla_{x,t} g|+ |\nabla_{x,t} f| |\g g|. \right)
\]
\end{lemma}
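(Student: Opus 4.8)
The plan is to exploit the standard fact that the rotation and boost vector fields span all the ``null directions'' tangent to the light cone, so that the antisymmetric combination $Q_{ij}(f,g)$ always involves at least one derivative that can be traded for a Klainerman vector field at the cost of a factor $(1+t)^{-1}$. Concretely, for $i,j$ among the spatial indices $1,2,3$ one writes $x_i\partial_j - x_j\partial_i = \O_{ij}$, and for a time-space pair one uses $x_0\partial_i + x_i\partial_0 = \O_{0i}$; combining these identities one can express any $\partial_i$ (for a fixed $i$) as a linear combination, with coefficients $x_k$, of $\partial_t$ and the $\O$'s, modulo the ``good'' direction. The key algebraic identity to establish first is
\[
(t-r)\,\partial_i f = \text{(linear combination of } \partial_t f,\ \O_{ij}f,\ \O_{0i}f \text{ with bounded coefficients)} + x_i\,\partial_t f,
\]
or more precisely the classical relation that allows one to write, for each pair $(i,j)$,
\[
Q_{ij}(f,g) = \frac{1}{t}\Bigl(\O\text{-field applied to one factor}\Bigr)\cdot\Bigl(\text{ordinary }\nabla_{x,t}\text{ of the other}\Bigr) + (\text{symmetric term}),
\]
valid away from $t=0$, together with a separate trivial bound near $t=0$ to absorb the constant.

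First I would recall the derivation of the pointwise identity for null forms: writing $L=\partial_t+\partial_r$, $\underline{L}=\partial_t-\partial_r$ and the angular derivatives $\slashed\partial$, one has $\partial_t = \tfrac12(L+\underline L)$ and $\partial_i = \tfrac{x_i}{r}\,\tfrac12(L-\underline L) + \slashed\partial_i$, and each of $r\slashed\partial$, $tL$ (for solutions, or simply as the differential operator $(t+r)L - $ correction) is a combination of the vector fields in $\tg$. The upshot is that for any pair of indices,
\[
\partial_i f\,\partial_j g - \partial_j f\,\partial_i g
\]
can be rearranged so that in every resulting product one of the two factors carries a $\tg$-vector field rather than a plain coordinate derivative, and the algebraic coefficient appearing is $\frac{1}{r}$ times a bounded quantity, hence $\lesssim \frac{1}{1+t}$ on the relevant region $|t-|x||\lesssim 1$ where these quantities will be applied (note the support hypothesis in the propositions where the lemma is used makes $r\sim t$). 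Then I would simply read off
\[
|Q_{ij}(f,g)| \leq \frac{C}{1+t}\bigl(|\g f|\,|\nabla_{x,t}g| + |\nabla_{x,t}f|\,|\g g|\bigr),
\]
using $|\g f| \geq |\partial_t f|$ and $|\g f|$ dominating any single $\O$-field applied to $f$.

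The main obstacle is handling the region $r \leq t/2$ (and more generally wherever $1+t$ is not comparable to $r$), since there the coefficient $1/r$ in the naive decomposition is not controlled by $1/(1+t)$. The clean way around this is the observation that on that region $t-r \gtrsim 1+t$, and one has the alternative identity $(t-r)\partial_i = \tfrac{x_i}{r}(\underline L\text{-type combination of }\tg)$ is not needed; instead one uses $ (1+t)\partial_i f$ can be written as a $\tg$-combination plus $x_i \partial_t f$ with $|x_i|\le r \le t$, together with the identically-true $t\,\partial_i f - x_i\,\partial_t f = -\O_{0i}f$. That last exact identity is the crucial one: it gives $|t\,\partial_i f - x_i\,\partial_t f| = |\O_{0i}f| \le |\g f|$ with no restriction on the region, and substituting this into $Q_{ij}$ (multiplying through by $t$ and using $x_i\partial_t f \cdot \partial_j g - x_j \partial_t f\cdot \partial_i g$ cancellation against the symmetric term) produces exactly the stated bound after dividing by $t$ and then replacing $1/t$ by $C/(1+t)$ on $t\ge 1$ and using the trivial bound on $t\le 1$. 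I would present this exact-identity route as the main argument since it avoids any case distinction on the region.
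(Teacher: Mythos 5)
Your final paragraph's ``exact-identity'' route is precisely the paper's argument (which it attributes to Klainerman, \cite[Lemma 1.1]{Kla1}): one writes $\partial_i=-\frac{x_i}{t}\partial_t+\frac{1}{t}\O_{0i}$ for $i=1,2,3$, substitutes into $Q_{ij}$, and observes that the $x$-weighted time-derivative terms recombine into $\partial_t f\cdot\O_{ij}g$ (they do not cancel; they assemble into a rotation field), so that every product carries a $\g$-field on one factor and an ordinary derivative on the other, with an overall $\frac{1}{t}$ prefactor; the $Q_{0j}$ case is handled identically. Two caveats: with the paper's convention $\O_{0i}=x_0\,\partial_i+x_i\,\partial_0=t\,\partial_i+x_i\,\partial_t$, the correct identity is $t\,\partial_i f+x_i\,\partial_t f=\O_{0i}f$, not $t\,\partial_i f-x_i\,\partial_t f=-\O_{0i}f$ as you wrote --- a sign slip that does not affect the final bound but should be fixed. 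Also, your opening discussion via the null-frame decomposition with its $1/r$ coefficient, and the proposed reliance on the support hypothesis $|t-|x||\lesssim 1$ to convert $1/r$ into $1/(1+t)$, is a detour the paper does not take: it creates exactly the $r\le t/2$ difficulty you then correctly circumvent, so it should be cut in favor of the identity you ultimately use (and the trivial bound for $t\lesssim 1$, or equivalently the paper's shift to $\tau=t+2$ in the propositions where this lemma is applied, handles the remaining small-time region).
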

\begin{rema} The proof is taken from \cite[Lemma 1.1]{Kla1}. We reproduce it here to stress that we have the estimate involving only vector fields $\g$ and not the full set of Klainerman vector fields, which includes the radial scaling field $x_0\der{}{x_0}+..+x_3\der{}{x_3}$.
\end{rema}
\begin{proof}
We have
\[
\der{}{x_i}=-\frac{x_i}{t}\der{}{x_0}+\frac{1}{t}\O_{0i},\quad i=1,2,3,
\]
Thus we have for $i,j\geq 1$
\[
Q_{ij}(f,g)=\frac{1}{t}[-\der{f}{x_0}\O_{ij}g+(\O_{0i}f \der{g}{x_j}-\O_{0j}f \der{g}{x_j})].
\]
For $i=0$, we have
\[
Q_{0j}=\frac{1}{t}(\der{f}{x_0}\O_{0j}g-\O_{0j}f\der{g}{x_0}).
\]
\end{proof}
We wish to prove a variant of the basic estimate specialized to the null form. 
\begin{prop}
\label{pr:basenull}
\[
 \begin{split}
 \|\gv{M}B(v,v)\|_2\lesssim&\ \frac{1}{1+t}\Big(\|\gv{\frac{M}{2}+1}v\|_{p_1}\|\nabla\gv{M}v\|_{q_1}\\
&+\|\gv{M+1}v\|_{q_2}\|\nabla\gv{\frac{M}{2}}v\|_{p_2}\Big),
\end{split}
\]
\end{prop}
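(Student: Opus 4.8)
The plan is to closely follow the proof of Proposition \ref{pr:base}, but to exploit the null structure of $B$ supplied by Proposition \ref{pr:null} and Lemma \ref{lm:null} so as to gain the extra factor $(1+t)^{-1}$. First I would record, from the second (coefficient) proof of Proposition \ref{pr:null}, that in a fixed coordinate patch on $K$ one may write
\[
B(v,v)=\sum_{i,j=0}^{3}\sum_{a,b}c^{ab}_{ij}\,Q_{ij}(v_{(a)},v_{(b)}),
\]
where the $v_{(a)},v_{(b)}$ range over the components of $v$, the two differentiated indices $i,j$ that sit inside $Q_{ij}$ are exactly the ones landing on $\re^{3+1}$, and the coefficients $c^{ab}_{ij}$ depend only on $x^4,\dots,x^{10}$ and are therefore bounded above and below by compactness of $K$.

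Next I would apply $\g^{\al}$ with $|\al|\le M$ and argue that the null structure persists. The coefficients $c^{ab}_{ij}$ commute with every field in $\tg$; the operators $\partial_k$ commute with $\partial_i,\partial_j$, and the commutators $[\partial_k,\O_{ij}]$, $[\partial_k,\O_{0j}]$ are first order constant-coefficient operators on $\re^{3+1}$ (these identities appear in the proof of Proposition \ref{pr:base}). Distributing the $\tg$-part of $\g^{\al}$ by the Leibniz rule and commuting the resulting derivatives past the remaining $\g$'s, one gets, exactly as in \cite[Lemma 1.1]{Kla1},
\[
\g^{\al}Q_{ij}(f,g)=\sum_{|\be'|+|\be''|\le M}\ \sum_{k,l=0}^{3}C^{kl}_{\be'\be''}\,Q_{kl}(\g^{\be'}f,\g^{\be''}g)
\]
with constants $C^{kl}_{\be'\be''}$. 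If $\g^{\al}$ contains copies of $\sql^{1/2}$, these commute with everything above since they act only on $K$, and are disposed of by elliptic regularity together with the Kato--Ponce-type product estimate in $H^k(K)$ precisely as in the last step of the proof of Proposition \ref{pr:base}, at the cost of bounded constants; so I may assume no $\sql^{1/2}$ occurs.

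It then remains to estimate $\sum_{|\be'|+|\be''|\le M}Q_{kl}(\g^{\be'}v,\g^{\be''}v)$ in $L^2(\re^{3}\times K)$. Applying Lemma \ref{lm:null} to each summand and using $|\g\,\g^{\be}v|\le|\gv{|\be|+1}v|$, $|\nabla_{x,t}\g^{\be}v|\le|\nabla\gv{|\be|}v|$, and $|\nabla\gv{|\be|}v|\le|\gv{|\be|+1}v|$, one gets
\[
|Q_{kl}(\g^{\be'}v,\g^{\be''}v)|\ls\frac{1}{1+t}\Big(|\gv{|\be'|+1}v|\,|\nabla\gv{|\be''|}v|+|\gv{|\be''|+1}v|\,|\nabla\gv{|\be'|}v|\Big).
\]
Since $|\be'|+|\be''|\le M$, in every product at most one of $|\be'|,|\be''|$ exceeds $M/2$; splitting the sum into the cases $|\be'|\le M/2$ (so $|\be''|\le M$) and $|\be''|\le M/2$ (so $|\be'|\le M$), each term is bounded pointwise by
\[
\frac{1}{1+t}\Big(|\gv{\frac{M}{2}+1}v|\,|\nabla\gv{M}v|+|\gv{M+1}v|\,|\nabla\gv{\frac{M}{2}}v|\Big).
\]
Taking the $L^2(\re^{3}\times K)$ norm, summing over the finitely many terms and applying H\"older with $\frac{1}{p_i}+\frac{1}{q_i}=\frac12$ yields the claimed estimate.

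The step I expect to be the main obstacle is the second one: verifying that $\g^{\al}Q_{ij}$ is again a finite combination of null forms $Q_{kl}$ applied to differentiated arguments, and that this is compatible both with the non-flat $K$-coefficients produced by $*$ and with the $\sql^{1/2}$ factors. Each of these is handled by combining Klainerman's commutator identities with the elliptic-regularity / Kato--Ponce device already used in Proposition \ref{pr:base}, but the index bookkeeping and the sign counting (as in the transposition count in the coefficient proof of Proposition \ref{pr:null}) is where the care is needed.
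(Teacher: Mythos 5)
Your proposal is correct and follows essentially the same route as the paper: decompose $B$ into a linear combination of $Q_{ij}$'s, use the stability of the null forms under the Klainerman vector fields (the paper cites the explicit commutator identity (\ref{eq:dernull}) from \cite[Lemma 1.2]{Kla1}, where you instead redrive it from $[\O,\partial]$ being constant-coefficient first-order, which amounts to the same thing), apply Lemma \ref{lm:null} and group the multi-indices, and dispose of the $\sql^{1/2}$ factors by elliptic regularity and Kato--Ponce as in Proposition \ref{pr:base}. The index bookkeeping you flag as the main worry is indeed the content of (\ref{eq:dernull}), but the conclusion you state is the right one and matches the paper's argument.
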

\begin{proof}
The bilinear form $B$ is a linear combination of the forms $Q_{ij}$. The forms $Q_{ij}$ are preserved under applications of the $\g$ operators (with the exception of $\sql^{1/2}$) because of the following formula
which appears in \cite[Lemma 1.2]{Kla1} and which can be obtained by direct calculation
\bb
\label{eq:dernull}
\O_{\al\be}Q_{\ga\de}(f,g)=Q_{\ga\de}(\O_{\al\be}f,g)+Q_{\ga\de}(f,\O_{\al\be} g)+\tilde{Q}(f,g), 
\ee
where
\[
\tilde{Q}(f,g)=m_{\al\ga}Q_{\be\de}(f,g)-m_{\be\ga}Q_{\al\de}(f,g)+m_{\al\de}Q_{\be\ga}(f,g)+m_{\be\de}Q_{\al\ga}(f,g),
\]
 where $m_{\al\be}$ are coefficients of the Minkowski metric. Thus we apply Lemma \ref{lm:null} and after grouping the multi-indices with order less then $\frac{M}{2}$ and applying the H\"older estimates we get the result like in Proposition \ref{pr:base} . For $\sql^{1/2}$ we apply Kato-Ponce estimates.
\end{proof}

\section{Proof of Theorem \ref{th:main}}
\label{sc:proof}
 To prove the theorem we seek to establish the following apriori bounds for the solutions of (\ref{eq:Cauchy}):
\bse
\label{eqs:as}
\bb
\label{eq:HE}
(1+t)^{-\delta}\|\nabla \gv{N} u(t) \|_2 \leq \ep ,
\ee
\bb
\label{eq:LE}
\|\nabla \gv{N-9} u(t) \|_2 \leq \ep, 
\ee
\bb
\label{eq:decay}
(1+t)\|\nabla \gv{N-18} \P0 u(t) \|_\infty +(1+t)^{3/2}\|\nabla \gv{N-18} \Pm u (t)\|_\infty  \leq \ep,
\ee
\ese
where $\delta$ is smaller then $\frac{1}{12}$.

Any of the global in time estimates above implies uniqueness, existence and well-posedness for the semilinear wave equations by employing the local theory which is explained in \cite[Chapter 2]{Sogb} or \cite[section 6.2]{Hor}. We will prove the estimates by bootstrapping. Namely, we will prove that (\ref{eqs:as}) imply
\bse
\bb
\label{eq:HEh}
(1+t)^{-\delta}\|\nabla \gv{N} u(t) \|_2 \leq \frac{\ep}{2} ,
\ee
\bb
\label{eq:LEh}
\|\nabla \gv{N-9} u(t) \|_2 \leq \frac{\ep}{2}, 
\ee
\bb
\label{eq:decayh}
(1+t)\|\nabla \gv{N-18} \P0 u(t) \|_\infty +(1+t)^{3/2}\|\nabla \gv{N-18} \Pm u(t) \|_\infty  \leq \frac{\ep}{2},
\ee
\ese
i.e. the right-hand side can be made $\frac{\ep}{2}$ instead  of $\ep$.
Thus our goal is to establish the following statement.
\begin{prop}
\label{pr:main}
Let $N$ be an integer that satisfies
\bb
\frac{N}{2}\leq N-18.
\ee
Then there exists $\ep>0$ small enough such that, for every $T>0$, if
a solution $u$ to the Cauchy problem (\ref{eq:Cauchy}) with initial data that satisfies 
\bb
\|\gv{N}u_0\|_2+\|\gv{N-1}u_1\|\leq \frac{\ep}{4},
\ee
such that $u_0(\cdot,y),u_1(\cdot,y)$ are localized in a ball of radius 1 for every $y\in K$ and if the inequalities (\ref{eq:HE}),(\ref{eq:LE}), (\ref{eq:decay}) hold for every $0<t\leq T$ then the inequalities (\ref{eq:HEh}),(\ref{eq:LEh}), (\ref{eq:decayh}) hold for every $0<t\leq T$,
where $\delta$ is a positive exponent which depends on $\ep$ and is smaller then $\frac{1}{12}$

\end{prop}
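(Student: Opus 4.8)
The plan is to run the bootstrap directly: assuming \eqref{eq:HE}, \eqref{eq:LE}, \eqref{eq:decay} hold on $[0,T]$, feed the solution back through the linear estimates of Section \ref{sc:lin} and the nonlinear estimates of Section \ref{sc:nonlin} and recover the same bounds with $\ep$ replaced by $\ep/2$. The inequality \eqref{eq:HEh} will come from the energy estimate at order $M=N$, \eqref{eq:LEh} from the energy estimate at order $M=N-9$, and \eqref{eq:decayh} from Corollary \ref{cr:dec} with $M=N-9$; so in every case the task reduces to decay-in-$s$ control of $\|\gv{M}(*(du\w du))(s)\|_2$, since the data terms are $\ls \ep$ by hypothesis. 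For the order-$N$ energy it suffices to have $\ls \ep^{2}(1+s)^{\delta-1}$ (after integration this is $\ls \delta^{-1}\ep^{2}(1+t)^{\delta}$, and dividing by $(1+t)^{\delta}$ and taking $\delta$ a small multiple of $\ep$ — consistent with $\delta<1/12$ for $\ep$ small — absorbs it into $\ep/2$), while for the order-$(N-9)$ quantities I need \emph{integrable} decay $\ls \ep^{2}(1+s)^{-1-\sigma}$, $\sigma>0$, which both closes \eqref{eq:LEh} by time integration and makes the dyadic sum $\sum_k\sup_{s\in I_k}2^{k}\|\gv{N-9}(*(du\w du))(s)\|_2$ a convergent geometric series, closing \eqref{eq:decayh}.

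For the easy parts I would split $*(du\w du)=B(\P0 u,\P0 u)+2C(\P0 u,\Pm u)+D(\Pm u,\Pm u)$ via Claim \ref{cl:split} and use a high--low splitting of the multi-index. Since $N/2\le N-18$, the factor carrying at most half the vector fields always falls within the range of the decay hypothesis \eqref{eq:decay}. At order $N$, Proposition \ref{pr:base} with that factor in $L^\infty$ (decay $\le\ep/(1+s)$) and the other in $L^2$ (bounded by \eqref{eq:HE}, $\le\ep(1+s)^{\delta}$) gives the desired $\ep^{2}(1+s)^{\delta-1}$. At order $N-9$: the term $B$ is a genuine null form (Proposition \ref{pr:null}), so Proposition \ref{pr:basenull} and Lemma \ref{lm:null} supply an extra $1/(1+s)$, yielding $\ep^{2}(1+s)^{-2}$; the term $D$ always has a $\Pm$-factor in the low slot, decaying like $\ep/(1+s)^{3/2}$ by \eqref{eq:decay}, hence $\ep^{2}(1+s)^{-3/2}$. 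Both are integrable.

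The main obstacle is the mixed term $C(\P0 u,\Pm u)=*(\dpa\P0 u\w d\Pm u)$: the interaction of a zero mode (a genuine $\re^{3+1}$ wave, only $1/(1+s)$ pointwise decay) with a nonzero mode (a Klein--Gordon wave, faster pointwise decay but no improved $L^2$ decay). When the $\Pm$-factor carries most of the vector fields it sits in $L^2$ with no decay, while the $\P0$-factor gives only $1/(1+s)$, producing the non-integrable rate $\ep^{2}/(1+s)$; neither the null structure nor the Klein--Gordon decay alone repairs this. The finer structure I would exploit is Claim \ref{cl:har}: $\dpe\P0 u=0$, so $d\P0 u=\dpa\P0 u$ and hence $C=*(d\P0 u\w d\Pm u)=\pm\,*d(d\P0 u\w\Pm u)$. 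Writing $d\Pm u=\dpa\Pm u+\dpe\Pm u$, the piece $*(\dpa\P0 u\w\dpa\Pm u)$ is again a null form (Proposition \ref{pr:null} only needs two differentiating covectors in $\re^{3+1}$), giving $\ep^{2}(1+s)^{-2}$; the remaining piece $*(\dpa\P0 u\w\dpe\Pm u)$ equals, up to sign, $*d_\pe(\dpa\P0 u\w\Pm u)$ (using $d_\pe\dpa\P0 u=\dpa d_\pe\P0 u=0$), i.e.\ a pure $K$-codifferential applied to a term in which $\Pm u$ is undifferentiated. Inside the energy identity for $\Pm u$ one then integrates by parts in the $K$-variables, moving $d_\pe$ off $\Pm u$ onto the energy density already present; the surviving $\Pm u$, now undifferentiated, enjoys the $(1+s)^{-3/2}$ decay in the low slot, and interpolating the slowly-decaying $\P0$-factor between its $L^2$ and $L^\infty$ bounds is designed to push the rate past integrability. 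The delicate point is making every such manipulation respect the derivative budget $N/2\le N-18$ in spite of the unavoidable off-by-one losses coming from commutators (formula \eqref{eq:dernull}) and from the $K$-integration by parts; the generous margin $N\ge36$ is what makes this feasible. Once $\|\gv{N-9}(*(du\w du))(s)\|_2\ls\ep^{2}(1+s)^{-1-\sigma}$ is established, \eqref{eq:LEh} and \eqref{eq:decayh} follow as above, and taking $\ep$ small (with $\delta$ a correspondingly small multiple of $\ep$) closes the bootstrap.
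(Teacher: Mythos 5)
Your high-level scheme matches the paper's: \eqref{eq:HEh} comes from the order-$N$ energy estimate plus Gronwall, and \eqref{eq:LEh}, \eqref{eq:decayh} both reduce to an integrable $L^2$-in-time bound on $\gv{N-9}(*(du\wedge du))$, fed through the energy estimate and Corollary~\ref{cr:dec}. Your treatment of $B$ and $D$ is also morally the paper's, although the paper's actual rate for $B$ is $(1+t)^{2\delta-4/3}$ rather than $(1+t)^{-2}$: Proposition~\ref{pr:basenull} costs a derivative (one factor is $\gv{M+1}$ without a $\nabla$), so the paper runs it through $L^6\times L^3$ H\"older with the $L^6$ bound coming from the homogeneous Sobolev embedding in $\re^3$, not from the $L^\infty$ decay hypothesis.

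The genuine gap is in the mixed term $C(\P0 u,\Pm u)$. You correctly observe that the na\"ive $L^\infty\times L^2$ split only yields the non-integrable $\ep^2(1+s)^{-1}$, but you then reach for a $K$-integration-by-parts and a further null-form splitting of $C$, neither of which the paper uses and neither of which you carry to a closed form. The paper's fix is purely an interpolation: it first upgrades the $L^\infty$ decay to the level $N-9$ with a $(1+t)^{\delta}$ loss (equation \eqref{eq:interm}), obtained by feeding the crude order-$N$ bound $\|\gv N F(s)\|_2\ls\ep^2(1+s)^{\delta-1}$ back into Corollary~\ref{cr:dec}; interpolating this against the bootstrap $L^2$ bounds gives the $L^p$ decay estimates \eqref{eq:interpH}--\eqref{eq:interp}. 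Then $C$ is estimated by $L^2\times L^\infty$ (high derivatives on $\P0 u$ in $L^2$ via \eqref{eq:LE}, low on $\Pm u$ in $L^\infty$ via \eqref{eq:decay}, rate $(1+s)^{-3/2}$) \emph{plus} $L^3\times L^6$ (low on $\P0 u$ in $L^3$ at rate $(1+s)^{-1/3}$ via \eqref{eq:interp}, high on $\Pm u$ in $L^6$ at rate $(1+s)^{\delta-1}$ via \eqref{eq:interpH}), giving $\ep^2(1+s)^{\delta-4/3}$, integrable for $\delta<1/3$. Your integration-by-parts sketch, as stated, is not obviously closable: after moving $d_\pe$ off $\Pm u$ the high-derivative factor still carries $N-9$ vector fields and has no improved $L^2$ decay, so you are back to the same obstruction unless you invoke an intermediate $L^p$ decay bound --- which is precisely the interpolation step you were missing.
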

\begin{rema} There are two considerations that affect the smallness of $\ep$. One is that we will see that we can replace $\ep$ in the right-hand side of (\ref{eqs:as}) by $\frac{\ep}{4}+k\ep^2$, where $k$ is will an apriori computable constant that depends only on $N$ and the geometry of the manifold $K$. Thus we will need to decrease $\ep$ to achieve the inequality 
\[
\frac{\ep}{4}+k\ep^2\leq \frac{\ep}{2}.
\]
Another consideration is that the exponent $\delta$ depends linearly on $\ep$, $\delta=C\ep$ with $C$ depending on $N$ and the geometry of $K$. We have to decrease $\ep$ so that $\delta=C\ep\leq \frac{1}{12}$.
\end{rema}
We prove the implication (\ref{eq:HEh}) in the next lemma, while the implications (\ref{eq:LEh}),(\ref{eq:decayh}) are proved in Lemma \ref{lm:lowh}.
\begin{lemma} 
\label{lm:HEh}
Under conditions of Proposition \ref{pr:main}, (\ref{eq:decay}) implies (\ref{eq:HEh}).
\end{lemma}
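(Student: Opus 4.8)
The goal is to bound the top-order energy $\|\nabla\gv{N}u(t)\|_2$ by $(1+t)^\delta\ep/2$, using the decay hypothesis (\ref{eq:decay}) and the energy estimate from Section \ref{sc:lin}. The starting point is the energy inequality
\[
\|\nabla\gv{N}u(t)\|_2 \le \|\nabla\gv{N}u(0)\|_2 + \int_0^t \|\gv{N}\big({*}(du\w du)\big)(s)\|_2\,ds,
\]
where the initial term is at most $\ep/4$ by hypothesis. So everything reduces to controlling the time integral of the nonlinearity. Using the splitting of Claim \ref{cl:split}, $*(du\w du) = B(\P0 u,\P0 u) + 2C(\P0 u,\Pm u) + D(\Pm u,\Pm u)$, I would estimate the three pieces separately.

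\emph{The $B$ term (null form).} Here I would invoke Proposition \ref{pr:basenull} with $M=N$, splitting the derivatives so that the low-order factor carries at most $N/2 + 1$ vector fields. Since $N/2 \le N-18 \le N-18$, the hypothesis $N/2\le N-18$ guarantees that $\gv{N/2+1}v$-type factors (with one extra vector field coming from the null structure, so really up to $N/2+1 \le N-17$ derivatives — one must check this fits under $N-18$, which may force a slightly stronger arithmetic constraint, or one absorbs the $+1$ by working with $\nabla\gv{N/2}$) can be placed in $L^\infty$ via (\ref{eq:decay}), while the high-order factor stays in $L^2$ and is controlled by (\ref{eq:HE}). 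The null-form gain yields an extra factor $(1+s)^{-1}$, and combined with the $L^\infty$ decay $(1+s)^{-1}$ (from $\P0$) this gives an integrand $\ls \ep^2 (1+s)^{-2+\delta}$, which integrates to a bounded quantity $\ls \ep^2$.

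\emph{The $C$ and $D$ terms (massive pieces).} For these I would use the plain basic estimate, Proposition \ref{pr:base}, again with $M=N$ and the derivative split putting $N/2$ derivatives on the low factor. The key point is that at least one factor involves $\Pm u$ — for $D$ both do, for $C$ one does — and for that factor the $L^\infty$ decay from (\ref{eq:decay}) is the stronger $(1+s)^{-3/2}$. Thus the $D$ integrand is $\ls \ep^2(1+s)^{-3/2+\delta}$ and the $C$ integrand $\ls \ep^2(1+s)^{-1+\delta}\cdot(1+s)^{-1/2} = \ep^2(1+s)^{-3/2+\delta}$ (one factor of $\P0 u$ in $L^\infty$ giving $(1+s)^{-1}$ and the $\Pm u$ factor giving the extra $(1+s)^{-1/2}$, or vice versa with the low-order $\Pm$ factor in $L^\infty$). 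In all cases $-3/2+\delta < -1$ so the time integral converges and contributes $\ls \ep^2$. Collecting everything, $\|\nabla\gv{N}u(t)\|_2 \le \ep/4 + k\ep^2$, and choosing $\ep$ small so that $k\ep^2 \le \ep/4$ (in fact $\le (1+t)^\delta\ep/4$) yields (\ref{eq:HEh}).

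\emph{Main obstacle.} The delicate bookkeeping is the derivative count: one has $N$ vector fields to distribute over a quadratic nonlinearity, and to close the estimate one low-order factor must be placed in $L^\infty$ using (\ref{eq:decay}), which only controls $\gv{N-18}$. This forces $N/2$ (or $N/2+1$ with the null-form correction) to be at most $N-18$, exactly the hypothesis in Proposition \ref{pr:main}; the $+1$ from the null form identity (\ref{eq:dernull}) must be handled carefully, either by a marginally sharper inequality or by noting $\gv{N/2+1}$ applied to $v$ is comparable to $\nabla\gv{N/2}v$ in the relevant norm so that the count is really $N/2 \le N-18$. The other subtlety is making sure the $B$ term's time integral genuinely converges despite only having $(1+s)^{-1}$ from the $\P0$ decay — this is why the null-form gain of an additional $(1+s)^{-1}$ from Lemma \ref{lm:null} is essential rather than merely convenient, and it is the single place where the bilinear structure of the equation, not just its size, enters the top-order estimate.
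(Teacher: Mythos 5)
Your proposal takes a genuinely different route from the paper, and unfortunately it has a gap at the top order that the paper's argument is specifically designed to sidestep.

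The paper does not use the null-form structure or the splitting of Claim \ref{cl:split} at all in this lemma. It simply applies the plain basic estimate, Proposition \ref{pr:base}, with $M=N$ and $(p_1,q_1)=(\infty,2)$ to get
\[
\|\gv{N}\bigl({*}(du\w du)\bigr)(s)\|_2 \ls \|\nabla\gv{N/2}u(s)\|_{\infty}\,\|\nabla\gv{N}u(s)\|_2,
\]
bounds the $L^\infty$ factor by $C\ep(1+s)^{-1}$ using (\ref{eq:decay}) and $N/2\le N-18$, and then runs Gronwall's inequality on
\[
\|\nabla\gv{N}u(t)\|_2 \le \tfrac{\ep}{4} + \int_0^t \frac{C\ep}{1+s}\|\nabla\gv{N}u(s)\|_2\,ds,
\]
obtaining $\|\nabla\gv{N}u(t)\|_2\le \tfrac{\ep}{4}(1+t)^{C\ep}$. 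This is precisely why the bootstrap ansatz (\ref{eq:HE})--(\ref{eq:HEh}) permits the top-order energy to grow like $(1+t)^\delta$: the time integral is \emph{not} required to converge, only to have a $(1+s)^{-1}$ integrand, and $\delta=C\ep$ absorbs the resulting logarithmic divergence. The null form and the $\P0/\Pm$ splitting are reserved for the lower-order estimate (Lemma \ref{lm:F}), where a genuinely integrable decay rate is needed.

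The concrete gap in your proposal is the high-order factor appearing in Proposition \ref{pr:basenull}. Applied with $M=N$, that proposition produces a term of the form $\|\gv{N+1}v\|_{q_2}\|\nabla\gv{N/2}v\|_{p_2}$; the extra $\g$ on the high-order factor comes from Lemma \ref{lm:null}, where one factor picks up a rotation $\O_{ij}$ or boost $\O_{0i}$ rather than a coordinate derivative. A pure string of $N+1$ rotations applied to $\P0 u$ is \emph{not} dominated by $\nabla\gv{N}u$, and hence is not controlled by the bootstrap assumption (\ref{eq:HE}). You correctly flagged the ``$+1$'' issue, but you located it on the \emph{low}-order factor $\gv{N/2+1}$, where it is indeed harmless (since $N/2+1$ still sits well under $N-9$, or one trades it for $\nabla\gv{N/2}$); the real obstruction is on the high-order side, where there is no room left. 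The paper's use of Proposition \ref{pr:basenull} in Lemma \ref{lm:F} works precisely because there $M=N-9$, so $\gv{M+1}=\gv{N-8}$ can be placed in $L^6$ and converted to $\|\nabla\gv{N-8}u\|_2\le\|\nabla\gv{N}u\|_2$ by the homogeneous Sobolev embedding, staying within reach of (\ref{eq:HE}). At $M=N$ this conversion overshoots to $\gv{N+1}$, and the argument cannot close. The takeaway is that at the very top order one should not reach for the null form; the mild polynomial slack $(1+t)^\delta$ built into (\ref{eq:HE}) is exactly what makes the coarser Proposition \ref{pr:base} plus Gronwall sufficient.
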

\begin{proof}
We use the energy estimate for the equation $\square \gv{N}u=\gv{N}(*du\w du)$.
\bb
\label{eq:eeh}
\|\nabla\gv{N} u (t)\|_2\leq \|\gv{N}\nabla u(0)\|_2+\int\limits_0^t \|\gv{N}*du\w du(s)\|_2ds.
\ee
We now use the Proposition \ref{pr:base} to estimate the nonlinearity. We have
\[
 \|\gv{N}*du\w du(s)\|_2\leq C\|\nabla \gv{\frac{N}{2}}u\|_{\infty}\|\nabla \gv{N} u\|_2.
\]
Since $\frac{N}{2}\leq N-18$ we employ the assumption (\ref{eq:decay}) in  equation (\ref{eq:eeh}) to conclude
\[
\|\nabla\gv{N} u (t)\|_2\leq \frac{\ep}{4}+\int\limits_0^t \frac{C\ep}{1+s}\|\nabla\gv{N}u(s)\|_2ds.
\]
After applying Gronwall inequality we conclude
\[
\|\nabla\gv{N} u (t)\|_2\leq \frac{\ep}{4}(1+t)^{C\ep},
\]
which is the required inequality.
\end{proof}

We require the following interpolated intermediate result.
\begin{claim} Let $2\leq p \leq \infty$ then:
 \begin{enumerate}
  \item Assumptions (\ref{eq:HE}) and (\ref{eq:decay}) imply
\bb
\label{eq:interpH}
(1+t)^{-\delta+1-\frac{2}{p}}\|\nabla\gv{N-9}\P0 u \|_{p}+(1+t)^{-\delta+3/2(1-\frac{2}{p})}\|\nabla\gv{N-9}\Pm u \|_{p}\ls \ep.
\ee
  \item Assumptions (\ref{eq:LE}) and (\ref{eq:decay}) imply
\bb
\label{eq:interp}
(1+t)^{-1+\frac{2}{p}}\|\nabla\gv{N-18}\P0 u \|_{p}+(1+t)^{-3/2(1-\frac{2}{p})}\|\nabla\gv{N-18}\Pm u \|_{p}\ls \ep.
\ee
 \end{enumerate}
\end{claim}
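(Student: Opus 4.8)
The plan is to deduce both inequalities by real interpolation between the energy bound in force ((\ref{eq:HE}) for part (1), (\ref{eq:LE}) for part (2)) and the pointwise bound (\ref{eq:decay}), after first eliminating the $K$-direction. For the latter step I would use the results of Section \ref{sc:hodge}: the range of $\P0$ is a finite-dimensional space of forms on $K$ on which all norms are equivalent, so $\|\P0 v\|_{L^p(\re^3\times K)}\ls\|\P0 v\|_{L^p(\re^3;L^2(K))}$ with no derivatives spent on $K$; and for $\Pm$, a fixed power of $\sql^{1/2}$ (which lies in $\g$, commutes with the other generators of $\g$ and with $\nabla_{t,x}$, and annihilates $\P0$) upgrades $L^2(K)$ to $L^\infty(K)$ by Sobolev embedding on the $7$-manifold $K$. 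Thus it is enough to interpolate on the $3$-dimensional factor $\re^3$, spending only a bounded number of the available $\g$'s; the hypothesis $N/2\le N-18$ of Proposition \ref{pr:main} supplies the needed surplus of derivatives.

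For part (2) the orders already line up: since $N-18\le N-9$, (\ref{eq:LE}) gives $\|\nabla\g^{\al}\P0 u\|_2\ls\ep$ and $\|\nabla\g^{\al}\Pm u\|_2\ls\ep$ for every $|\al|\le N-18$, while (\ref{eq:decay}) controls the matching $L^\infty$ norms with weights $(1+t)^{-1}$ and $(1+t)^{-3/2}$. Applying the log-convexity inequality $\|f\|_p\le\|f\|_2^{2/p}\|f\|_\infty^{1-2/p}$ (for $2\le p\le\infty$) to $f=\nabla\g^{\al}\P0 u$ and $f=\nabla\g^{\al}\Pm u$ and summing over $|\al|\le N-18$ yields (\ref{eq:interp}), in fact with temporal decay at least as strong as stated.

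For part (1) there is a mismatch of orders: (\ref{eq:HE}) controls only the top order $\gv{N}$ in $L^2$, and with growth $(1+t)^{\de}$; (\ref{eq:decay}) controls only $\gv{N-18}$ in $L^\infty$; and the target $\gv{N-9}$ sits strictly in between. The organizing observation is that $N-9$ is the arithmetic mean of $N$ and $N-18$, so the Gagliardo--Nirenberg inequality on $\re^3$ applies with interpolation exponent exactly $\tfrac12$ and lands on the $L^4$ endpoint at that order: for $|\al|\le N-9$, $\|\nabla\g^{\al}\P0 u\|_4\ls\|\nabla\gv{N}\P0 u\|_2^{1/2}\,\|\nabla\gv{N-18}\P0 u\|_\infty^{1/2}$, and likewise for $\Pm u$. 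Feeding in (\ref{eq:HE}) and (\ref{eq:decay}) gives the $p=4$ case of (\ref{eq:interpH}); the cases $2\le p\le 4$ then follow by log-convexity between this $L^4$ bound and the $L^2$ bound at order $\gv{N-9}$ furnished directly by (\ref{eq:HE}), and the cases $4\le p\le\infty$ by interpolating the $L^4$ bound against (\ref{eq:decay}) together with Sobolev embedding on $\re^3$ to reach the remaining exponents.

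The hard part is not any single interpolation but the exponent bookkeeping that threads through all of them: one must check that, after combining the slow growth $(1+t)^{\de}$ of the top-order energy, the decay $(1+t)^{-1}$, $(1+t)^{-3/2}$ of the pointwise bound, and the derivative losses from Gagliardo--Nirenberg, from Sobolev embedding on $\re^3$, and from the elliptic reduction in the $K$-variable, the temporal weight surviving on $\|\nabla\gv{N-9}\P0 u\|_p$ (resp.\ $\|\nabla\gv{N-9}\Pm u\|_p$) is no weaker than $(1+t)^{\de-1+2/p}$ (resp.\ $(1+t)^{\de-\frac32(1-2/p)}$). Since $\de<\tfrac1{12}$ is small the gain of $\de$ is harmless in all these manipulations, and the only place that genuinely needs care is $p$ near $\infty$, where the $L^2$ input is used with the least weight and the derivative budget is tightest. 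I would also record at the outset the small bookkeeping point that pre-composing $u$ with $\nabla_{t,x,y}$ and with the vector fields $\g$ before interpolating is legitimate: every generator of $\g$ other than $\sql^{1/2}$ is a vector field with the bounded commutators listed in Section \ref{sc:nonlin}, and $\sql^{1/2}$ commutes with all of them.
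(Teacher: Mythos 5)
Your part (2) matches the paper exactly: since all derivative orders line up, the log-convexity inequality $\|f\|_p\le\|f\|_2^{2/p}\|f\|_\infty^{1-2/p}$ applied at each fixed order $|\al|\le N-18$ does the job. That step is fine.

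Your part (1), however, has a genuine gap. You propose to bridge the order mismatch ($L^2$ at order $N$, $L^\infty$ at order $N-18$, target order $N-9$) by a Gagliardo--Nirenberg inequality ``in the $\g$-derivative index,'' of the form
\[
\|\nabla\g^{\al}v\|_4\ls\|\nabla\gv{N}v\|_2^{1/2}\,\|\nabla\gv{N-18}v\|_\infty^{1/2},\qquad |\al|\le N-9.
\]
The scaling bookkeeping for this indeed closes (as you checked), but the inequality itself is not the standard Gagliardo--Nirenberg estimate and is not justified. Gagliardo--Nirenberg is a statement about ordinary constant-coefficient derivatives $\partial^\be$; the generators of $\g$ include the boosts $\O_{0i}=t\partial_i+x_i\partial_t$, whose coefficients are unbounded, and at a fixed time slice $\O_{0i}u(t,\cdot)$ is not even an intrinsic derivative of the function $u(t,\cdot)$ on $\re^3$ (it involves $\partial_t u$). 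There is no reason a naive interpolation across the number of such operators should hold, and you do not supply a proof. This is the step that would need to be supplied, and it is precisely the hard point that the claim is designed to get around. A further warning sign: your proposed inequality is purely about an arbitrary function $v$ and never uses the PDE, whereas the statement you are proving sits in the middle of a bootstrap for a specific solution.

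The paper instead sidesteps the order mismatch by going back to the equation. It applies the linear decay estimate (Corollary \ref{cr:dec}, equation (\ref{eq:decaygen})) with $M=N$ to the equation $\square_{\rk}u=*(du\w du)$, bounding the forcing via Proposition \ref{pr:base} and the bootstrap hypotheses (\ref{eq:HE}), (\ref{eq:decay}) (this is where $N/2\le N-18$ is used), to produce a genuinely new $L^\infty$ estimate (\ref{eq:interm}) at the intermediate order $N-9$ with weight $(1+t)^{\delta-1}$ (resp.\ $(1+t)^{\delta-3/2}$). Once both $L^2$ and $L^\infty$ bounds are available at the same order $N-9$, the conclusion follows from the same log-convexity interpolation you used for part (2). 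So the route in the paper is: PDE estimate to gain an $L^\infty$ bound at the right order, then elementary interpolation in $p$; your route tries to interpolate in the derivative index without touching the equation, and that key step is missing.
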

\begin{proof}
 Obviously, the estimate for sum with more derivatives is true for the sum with less derivatives and thus we will interpolate between equation (\ref{eq:LE}) and equation (\ref{eq:decay}) to get the second conclusion. To prove the first point, we  use  the following intermediate result,
\bb
\label{eq:interm}
(1+t)^{-\delta+1}\|\nabla\gv{N-9}\P0 u \|_{\infty}+(1+t)^{-\delta+3/2}\|\nabla\gv{N-9}\Pm u \|_{\infty}\ls \ep.
\ee
By interpolation, of equation (\ref{eq:interm}) with equation (\ref{eq:HE}) we have
\[
(1+t)^{-\delta+1-\frac{2}{p}}\|\nabla\gv{N-9}\P0 u \|_{p}+(1+t)^{-\delta+3/2(1-\frac{2}{p})}\|\nabla\gv{N-9}\Pm u \|_{p}\ls \ep.
\]
To establish (\ref{eq:interm}) we use equation (\ref{eq:decaygen})
\begin{align*}
(1&+t)\|\nabla\gv{N-9}\P0 u \|_{\infty}\\
&\begin{split}+(1+t)^{3/2}\|\nabla\gv{N}\Pm u \|_{\infty}\leq&\|\gv{N-9}u(0)\|_2
\\&+\sum\limits_n \sup\limits_{\tau\in I_n}{ 2^n\|\gv{N}F (\tau))\|_{2}},
\end{split}
\end{align*}
where $I_n=[2^{n-1},2^{n+1}]\cap [0,t]$ and $F=*(du\w du)$. We use Proposition \ref{pr:base} and combine it with the assumptions to get
\begin{align*}
\|\gv{N}F(\tau)\|_2= \|\gv{N}*du\w du(\tau)\|_2&\ls C\|\nabla \gv{\frac{N}{2}}u\|_{\infty}\|\nabla \gv{N} u\|_2\\
&\ls \ep^2(1+\tau)^{\delta-1}.
\end{align*}
Therefore,
\begin{align*}
(1&+t)\|\nabla\gv{N-9}\P0 u \|_{\infty}\\
&
\begin{split} +(1+t)^{3/2}\|\nabla\gv{N-9}\Pm u\|_{\infty}
 \ls&\|\gv{N-9}u(0)\|_2\\
 &+ \sum\limits_n \sup\limits_{\tau\in[2^{n-1},2^{n+1}]\cap [0,t]}\ep  2^n \tau^{-1+\delta}\\
\leq& \frac{\ep}{4}+k \ep^2 (1+t)^{\delta},
\end{split}
\end{align*}
which proves (\ref{eq:interm}).
\end{proof}

To complete the proof of Proposition \ref{pr:main}, we analyze the equation 
\[
\square_{\rk}\gv{N-9}u=\gv{N-9}*du\w du.
\]
We estimate the right-hand side of the equation in the following lemma.
\begin{lemma} 
\label{lm:F}
Under assumptions (\ref{eq:HE}),(\ref{eq:LE}),(\ref{eq:decay}) we have
\[
\|\gv{N-9}*du\w du(t)\|_2\ls \ep^2(1+t)^{-1-\frac{1}{3}+2\delta}.
\]
\end{lemma}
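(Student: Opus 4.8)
The proof will split the nonlinearity according to Claim \ref{cl:split},
\[
*(du\w du)=B(\P0 u,\P0 u)+2C(\P0 u,\Pm u)+D(\Pm u,\Pm u),
\]
and estimate $\gv{N-9}$ of each of the three pieces in $L^2$, in every case placing the factor of lower order in $\g$ into an $L^\infty$ (or, for the null piece, an $L^3$) norm and the factor of top order into an $L^2$ (respectively $L^6$) norm. The hypothesis $\tfrac{N}{2}\le N-18$ of Proposition \ref{pr:main} guarantees that the lower-order factor carries at most $\tfrac{N-9}{2}\le N-18$ fields $\g$, so the pointwise decay (\ref{eq:decay}) always applies to it.

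The genuinely bilinear, non-null pieces $C$ and $D$ are the easy ones, because each of them contains a Klein--Gordon factor $\Pm u$ whose stronger decay carries the estimate. For $D(\Pm u,\Pm u)$ one applies Proposition \ref{pr:base} with exponents $\infty$ and $2$ and uses (\ref{eq:decay}) for the lower-order factor and (\ref{eq:LE}) for the top-order one, obtaining $\|\gv{N-9}D\|_2\ls\ep^2(1+t)^{-3/2}$. For $C(\P0 u,\Pm u)$ one again uses Proposition \ref{pr:base}, but now in both of its terms one keeps the wave factor $\P0 u$ in $L^2$ (bounded by $\ep$ through (\ref{eq:LE})) and the Klein--Gordon factor $\Pm u$ in $L^\infty$: at top order one invokes the intermediate estimate (\ref{eq:interm}) and gains $(1+t)^{-3/2+\delta}$, at lower order one invokes (\ref{eq:decay}) and gains $(1+t)^{-3/2}$. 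Hence $\|\gv{N-9}C\|_2\ls\ep^2(1+t)^{-3/2+\delta}$, comfortably below the target rate.

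The whole content of the lemma lies in the pure-wave piece $B(\P0 u,\P0 u)$, for which the null structure is indispensable --- two wave factors estimated as $L^\infty\cdot L^2$ decay only like $(1+t)^{-1}$, which is not integrable. Here I would invoke Proposition \ref{pr:basenull} with $M=N-9$, which extracts the decisive factor $(1+t)^{-1}$ at the price of one extra $\g$; since $N-8\le N$ this remains within the energy budget (\ref{eq:HE}). The remaining product of two norms I would split by H\"older along the conjugate pair $\tfrac16+\tfrac13=\tfrac12$: the top-order factor is measured in $L^6(\re^3)$, where the Sobolev embedding $\dot H^1(\re^3)\hookrightarrow L^6(\re^3)$ --- together with the equivalence of the $K$-norms on the range of $\P0$ from Corollary \ref{cr:elreg} --- converts it into a spatial derivative and hence into the top-order energy $\|\nabla\gv{N}u\|_2\ls\ep(1+t)^{\delta}$; the lower-order factor, carrying at most $N-18$ fields $\g$, is measured in $L^3(\re^3)$, where interpolating (\ref{eq:LE}) against (\ref{eq:decay}) gives $\ep(1+t)^{-1/3}$. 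Multiplying out,
\[
\|\gv{N-9}B(\P0 u,\P0 u)\|_2\ls\frac{1}{1+t}\,\ep(1+t)^{\delta}\,\ep(1+t)^{-1/3}=\ep^2(1+t)^{-4/3+\delta},
\]
and adding the three contributions yields the asserted bound (the exponent $\delta$ may be upgraded to $2\delta$ at no cost).

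The step I expect to be the main obstacle is the one just used: the factor produced by Proposition \ref{pr:basenull} carries an extra Klainerman rotation or boost but no plain spatial derivative, so neither the energy estimates nor (\ref{eq:decay}) apply to it verbatim. To circumvent this I would exploit finite speed of propagation --- the Cauchy data being supported in the unit ball for every $y\in K$, the wave part $\P0 u(t,\cdot,y)$ is supported in the shell $|t-|x||\ls 1$ --- together with a one-dimensional Poincar\'e inequality in the radial variable, which on such a shell controls $\|\gv{k}\P0 u\|_2$ by $\|\nabla_x\gv{k}\P0 u\|_2$ and, after a Klainerman--Sobolev inequality in the $x$ and $y$ variables, controls $\|\gv{k}\P0 u\|_\infty$ by $\ep(1+t)^{-1}$; interpolating these two produces the $L^3$ bound used above. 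Making this reduction precise, while bookkeeping the few extra derivatives it consumes against the margin $N-18\ge N/2$, is the technical heart of the lemma.
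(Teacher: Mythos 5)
Your decomposition into $B$, $C$, $D$, your use of Proposition \ref{pr:basenull} for the pure-wave piece, Proposition \ref{pr:base} for $C$ and $D$, and your interpolation strategy for the intermediate $L^p$ exponents are all exactly what the paper does, and your exponent bookkeeping for each of the three pieces is correct (your bound for $C$ is in fact marginally sharper than the paper's, since you place the $\P0$ factor in $L^2$ in both Leibniz terms rather than using the $L^3\times L^6$ pairing, but both are below the target rate).

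The one genuine problem is the final paragraph, where you misdiagnose an obstacle that is not there. You worry that the term $\|\gv{M+1}v\|_{q}$ produced by Proposition \ref{pr:basenull} carries an extra Klainerman field but no ``plain'' derivative, so that the energy budget $\|\nabla\gv{N}u\|_2$ cannot be brought to bear on it, and you propose a detour through finite speed of propagation, a radial Poincar\'e inequality and a Klainerman--Sobolev estimate. But you have already resolved this in your own third paragraph: the homogeneous Sobolev embedding $\dot H^1(\re^3)\hookrightarrow L^6(\re^3)$, applied slice-by-slice in $y\in K$, supplies precisely one spatial derivative, turning $\|\gv{N-8}\P0 u\|_{L^6(\re^3)}$ into $\|\nabla_x\gv{N-8}\P0 u\|_{L^2(\re^3)}$; since $\partial_{x_i}\in\g$, this is a component of $\nabla\gv{N-8}u$ and hence is controlled by $\|\nabla\gv{N}u\|_2$ via (\ref{eq:HE}). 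The $K$-direction is handled by Corollary \ref{cr:elreg}, as you note. No support condition, no Poincar\'e inequality and no Klainerman--Sobolev embedding enter the paper's proof of this lemma; those ingredients belong to the linear decay estimates of Section \ref{sc:lin}, not to the nonlinear estimate here. You should also make explicit that the \emph{other} Leibniz term from Proposition \ref{pr:basenull}, namely $\|\gv{\frac{N}{2}+1}\P0 u\|_6\,\|\nabla\gv{N-9}\P0 u\|_3$, is bounded the same way: Sobolev embedding on the low-order $L^6$ factor (now using (\ref{eq:LE}) since $\frac{N}{2}+1\leq N-9$) and (\ref{eq:interpH}) at $p=3$ on the top-order $L^3$ factor, yielding $\ep^2(1+t)^{2\delta-4/3}$; you only wrote out the term in which the top-order factor sits in $L^6$.
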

\begin{proof}
Recall the splitting of the nonlinearity in Claim \ref{cl:split} and denote
\bse
\label{eq:forms}
\bb
 B(t)=\|\gv{N-9}B(\P0 u(t), \P0 u(t))\|_2,
\ee
\bb
C(t)=\|\gv{N-9}C(\P0 u(t), \Pm u(t))\|_2,
\ee
\bb
D(t)=\|\gv{N-9}D(\Pm u(t), \Pm u(t))\|_2.
\ee
\ese
To prove the lemma, we will obtain the bound above for $B(t)$,$C(t)$,$D(t)$.
\begin{description}
\item[Estimate for $B(t)$]
To estimate $B(t)$ we use Proposition \ref{pr:basenull} to get
\begin{align}
\label{eq:B1}
 \|\gv{N-9} B(\P0 u,\P0 u)\|_2  \leq& \frac{k}{1+t}\bigl(\|\gv{\frac{N}{2}+1}\P0 u \|_{6}\|\nabla \gv{N-9} \P0 u\|_{3} \\ \nonumber
&+\|\gv{N-8}\P0 u \|_{6}\|\nabla \gv{\frac{N}{2}} \P0 u\|_{3} \bigr).
\end{align}
We wish to use the homogeneous Sobolev embedding in $\re^3$. Thus, we have for every $y\in K$,
\[
 \|\gv{M}\P0 u(t,\cdot,y) \|_{L^6(\re^3)}\leq\|\nabla\gv{M}\P0 u(t,\cdot,y) \|_{L^2(\re^3)}.
\]
Next we employ the compactness of $K$
to see that  the $L^6(K)$ norm of $\P0 \gv{M} u$ is dominated by the $L^\infty(K)$ norm. At the same time, elliptic regularity shows that the $L^\infty(K)$ norm of $\P0\nabla \gv{M}u$ is dominated by its' $L^{2}(K)$ norm. Therefore
\bb
\label{eq:B2}
\|\gv{\frac{N}{2}+1}\P0 u \|_{6}\leq \|\gv{N-8}\P0 u \|_{6} \leq \|\nabla\gv{N-8}u\|_{2} \leq k\ep(1+t)^{\de},
\ee
by (\ref{eq:interpH}) and the fact that $\frac{N}{2}\leq N-18$.
Employing (\ref{eq:interpH}) again we have
\bb
\label{eq:B3}
\|\nabla\gv{N-9} u\|_{6}\ls \ep(1+t)^{\delta-\frac{1}{3}}.
\ee
Also equation (\ref{eq:interp}) implies
\bb
\label{eq:B4}
\|\nabla \gv{\frac{N}{2}} \P0 u\|_{3}\leq \|\nabla\gv{N-18} \P0 u\|_{3}\ls \ep (1+t)^{-\frac{1}{3}}.
\ee
Thus, applying (\ref{eq:B2}),(\ref{eq:B3}),(\ref{eq:B4}) on (\ref{eq:B1})  we have
\bb
\label{eq:B}
B(t)=\|\gv{N-9} B(\P0 u,\P0 u)\|_2\leq k\ep^2 (1+t)^{2\delta -\frac{1}{3}-1}.
\ee
\item[Estimate for $C(t)$]
For $C(t)$ we have the following estimate.
We use Proposition \ref{pr:base} to see that
\begin{align*}
\| \gv{N-9} C(\P0 u, \Pm u)\|_2 \ls&
 \ \|\nabla \gv{N-9} \P0 u\|_{2}\|\nabla \gv{\frac{N}{2}} \Pm u\|_{\infty}\\
 &+\|\nabla \gv{\frac{N}{2}} \P0 u\|_{3}\|\nabla \gv{N-9} \Pm u\|_{6}.
\end{align*}
By equation (\ref{eq:interp})
\[
\|\nabla\gv{\frac{N}{2}} \P0 u\|_{3}\leq \ep (1+t)^{-\frac{1}{3}}
\]
and by (\ref{eq:interpH})
\[
\|\nabla \gv{N-9} \Pm u\|_6 \leq \ep (1+t)^{\delta-1}.
\]
For the second summand, we will use the bootstrap assumptions (\ref{eq:LE}) and (\ref{eq:decay}).
Thus we get
\bb
\label{eq:C}
C(t)=\|\gv{N-9} C(\P0 u, \Pm u)\|_2\leq k\ep^2 ((1+t)^{\delta-\frac{4}{3}}+(1+t)^{-\frac{3}{2}}).
\ee
\item[Estimate for $D(t)$]
Lastly, we estimate $D(t)$. We have
\[
\|\gv{N-9}D(\Pm u, \Pm u)\|_2\leq k(\|\nabla \gv{\frac{N}{2}} \Pm u\|_{\infty}\|\nabla \gv{N-9} \Pm u\|_2),
\]
which according to the bootstrap assumptions satisfies
\bb
\label{eq:D}
D(t)=\|\gv{N-9}D(\Pm u, \Pm u)\|_2\leq k \ep^2 \frac{1}{(1+t)^{\frac32}}.
\ee

\end{description}
We combine (\ref{eq:B}),(\ref{eq:C}),(\ref{eq:D}) to obtain the required estimate.
\end{proof}
We are now ready to complete the proof of the Proposition \ref{pr:main} in the following lemma.
\begin{lemma}
\label{lm:lowh} Under the conditions of Proposition \ref{pr:main}, the assumptions (\ref{eq:HE}),(\ref{eq:LE}),(\ref{eq:decay}) imply 
(\ref{eq:LEh}),(\ref{eq:decayh})
\end{lemma}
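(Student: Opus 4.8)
\emph{Proof proposal for Lemma \ref{lm:lowh}.} The plan is to derive the two remaining bootstrap inequalities directly from the improved nonlinearity bound of Lemma \ref{lm:F}. The low-energy inequality (\ref{eq:LEh}) will follow from the $L^2$ energy estimate applied to $\gv{N-9}u$, and the pointwise decay inequality (\ref{eq:decayh}) from the decay estimate of Corollary \ref{cr:dec} applied with $M=N-9$. In both cases the single quantitative input is that Lemma \ref{lm:F} gives
\[
\|\gv{N-9}*(du\w du)(s)\|_2\ls\ep^2(1+s)^{-1-\frac13+2\delta},
\]
whose exponent is $<-1$, so that the resulting $t$-integral converges and, after multiplication by the dyadic weight $2^k$, the corresponding dyadic sum is also summable; this works as soon as $2\delta<\frac13$, which is guaranteed by $\delta<\frac1{12}$.

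\emph{Proof of (\ref{eq:LEh}).} Apply the energy estimate of Section \ref{sc:lin} to $\square_{\rk}\gv{N-9}u=\gv{N-9}*(du\w du)$:
\[
\|\nabla\gv{N-9}u(t)\|_2\le\|\nabla\gv{N-9}u(0)\|_2+\int_0^t\|\gv{N-9}*(du\w du)(s)\|_2\,ds.
\]
The data term is $\le\frac\ep4$: since $\partial_i$ and $\sql^{1/2}$ raise a multi-index by one and $\O_{0i}$ reduces to $x_i\partial_t$ at $t=0$, one has $\|\nabla\gv{N-9}u(0)\|_2\ls\|\gv{N}u_0\|_2+\|\gv{N-1}u_1\|_2$, which is $\le\frac\ep4$ by hypothesis (after absorbing the reindexing constant into the choice of data bound, as in the remark following Proposition \ref{pr:main}). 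For the integral, Lemma \ref{lm:F} gives $\int_0^t\ep^2(1+s)^{-\frac43+2\delta}\,ds\le k\ep^2$ because $-\frac43+2\delta<-1$. Hence $\|\nabla\gv{N-9}u(t)\|_2\le\frac\ep4+k\ep^2\le\frac\ep2$ once $\ep$ is small.

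\emph{Proof of (\ref{eq:decayh}).} We verify the hypotheses of Corollary \ref{cr:dec} with $M=N-9$: the condition $M>9$ holds since $\frac N2\le N-18$ forces $N\ge36$; the data are localized in the unit ball for every $y\in K$ by assumption; and $F=*(du\w du)$ is supported in $\{(t,x):|t-|x||\le1\}$ for every $y$ by finite speed of propagation. The corollary then gives, with $I_k=[2^{k-1},2^{k+1}]\cap[0,t]$,
\[
(1+t)\|\nabla\gv{N-18}\P0 u(t)\|_\infty+(1+t)^{3/2}\|\nabla\gv{N-18}\Pm u(t)\|_\infty\ls\|\nabla\gv{N-9}u(0)\|_2+\sum_k\sup_{s\in I_k}2^k\|\gv{N-9}F(s)\|_2,
\]
where the left-hand side already carries the reduced derivative count $N-18$ built into Corollary \ref{cr:dec}. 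The data term is again $\le\frac\ep4$ as above. For the sum, Lemma \ref{lm:F} bounds each term by $\ls\ep^2\,2^k(1+2^{k-1})^{-\frac43+2\delta}$; for $k\le0$ this is $\ls\ep^2 2^k$, and for $k\ge0$ it is $\ls\ep^2\,2^{k(-\frac13+2\delta)}$, so both tails sum to $\ls\ep^2$ since $-\frac13+2\delta<0$. Thus the left-hand side is $\le\frac\ep4+k\ep^2\le\frac\ep2$ for $\ep$ small, which is (\ref{eq:decayh}).

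\emph{Main obstacle.} Essentially all the difficulty sits in Lemma \ref{lm:F}, i.e.\ in gaining the extra factor $(1+t)^{-1/3}$ beyond the scale-critical rate $(1+t)^{-1}$: this uses the null structure of $B$ through Proposition \ref{pr:basenull} and the spectral gap of $\lk$, which makes the $\Pm$ pieces obey Klein–Gordon decay at rate $(1+t)^{-3/2}$. In the present lemma the only thing to monitor is the consistency of exponents — one needs $2\delta<\frac13$ for both the time integral and the dyadic sum to converge, and one needs the quadratic errors $k\ep^2$ to be absorbable into $\frac\ep4$ — and both are arranged by shrinking $\ep$, which is compatible with $\delta=C\ep$ and with the claimed bound $\delta<\frac1{12}$. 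Combining this lemma with Lemma \ref{lm:HEh} closes the bootstrap (\ref{eqs:as}) and completes the proof of Proposition \ref{pr:main}, hence of Theorem \ref{th:main}.
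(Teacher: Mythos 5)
Your proof is correct and follows essentially the same route as the paper: for (\ref{eq:LEh}) apply the energy estimate to $\square_{\rk}\gv{N-9}u=\gv{N-9}(*du\w du)$ and integrate the bound of Lemma \ref{lm:F}, and for (\ref{eq:decayh}) apply Corollary \ref{cr:dec} with $M=N-9$ and sum the dyadic series using the same bound, with the convergence condition $2\delta<\tfrac13$ in both cases. The extra bookkeeping you include (verifying $M>9$, the support of $F$ via finite speed, and the control of the data term at $t=0$) is correct and fills in details the paper leaves implicit.
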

\begin{proof} We analyze the equation 
\bb
\label{eq:dw}
\square_{\rk}\gv{N-9}u=\gv{N-9}(*du\w du).
\ee
First apply the energy estimate to get
\begin{align*}
\|\gv{N-9} u(t)\|_2&\leq \|\gv{N-9} u(t)\|_2+\int\limits_0^t{\|\gv{N-9}(*du\w du)(s)\|_2 ds}\\
&\leq \frac{\ep}{4}+k\ep^2\int\limits_0^t \frac{1}{(1+t)^{1+1/12}}ds
\leq \frac{\ep}{4}+k\ep^2\leq \frac{\ep}{2},
\end{align*}
where we used Lemma \ref{lm:F} and the assumptions. This establishes (\ref{eq:LEh}). To address (\ref{eq:decayh}), apply (\ref{cr:dec}) to equation (\ref{eq:dw}) to get
\begin{align*}
(1&+t) \|\nabla\gv{N-18}\P0 u(t)\|_{\infty}\\
&
\begin{split}
+(1+t)^{3/2}\|\nabla\gv{N-18}\Pm u(t)&\|_{\infty}\\
\leq&\ \|\nabla \gv{N-9}\P0 u(0)\|_2\\
&+\|\sum\limits_n \sup\limits_{s\in I_n}{ 2^n \gv{N-9}(F(s)))}\|_2, 
\end{split}
\end{align*}
where  $I_n=[2^{n-1},2^{n+1}]\cap [0,t]$ and $F=*du\w du$. Since $$\|\gv{N-9}F(s)\|_2\leq \ep^2(1+s)^{-1-\frac{1}{12}}$$  by Lemma \ref{lm:F}, we have
\begin{align*}
(1&+t)\|\nabla\gv{N-18}\P0 u(t)\|_{\infty}\\
&
\begin{split}
+(1+t)^{3/2}\|\nabla\gv{N-18}\Pm u(t)\|_{\infty}
&\leq \frac{\ep}{4}+k\sum\limits_{n\leq C\log t+1}  2^n \ep^2 2^{-n(1+\frac{1}{12})}\\
&\leq \frac{\ep}{4}+k\ep^2,
\end{split}
\end{align*}
which establishes (\ref{eq:decayh}) and completes the proof.

\end{proof}

\section{Acknowledgements} The author would like to thank his doctoral adviser Daniel Tataru for suggesting the problem and several key ideas in the proof, Jason Metcalfe for discussing the technical aspects of \cite{MSS} and \cite{MS2}, Robin Graham for exposing the author to questions of supergravity and explaining the results of conformal geometry, Jon Aytac for pointing to the work of Choquet-Bruhat \cite{CB} and Tobias Schottdorf for helping improve the exposition. 
\appendix
\section{Proof of the theorem \ref{th:kgdec}}
We will denote $\tau=t+2$. We will again employ operators $\g$ in $\tau$ variable, which are different from the operators in $t$ variable but can be expressed as a linear combination with coefficients independent of $u$. See proof of Proposition \ref{pr:0dec} for details regarding this substitution. 
We will use the following statement
\begin{prop}
\label{pr:hyper}
Let $g$ be supported in $\{(\tau,x);T/2\leq \tau \leq T,|x|\leq \tau\}$. Denote 
\[
 M(\rho)=\sup_{\tau^2-|x|^2=\rho^2} |g(\tau,x)|
\]
then 
\[
 T^{2}\int M(\rho^2)^2\rho d\rho\leq C\sum\limits_{|I|\leq \frac{5}{2}}\int {|\tg^I g(\tau,x)|^2d\tau dx}.
\]
\end{prop}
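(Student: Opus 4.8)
The plan is to prove Proposition \ref{pr:hyper} by a change of variables to hyperbolic (hyperboloidal) coordinates adapted to the light cone, which is the standard device for converting Klainerman-type estimates on Minkowski space into weighted $L^2$ bounds along hyperboloids. Concretely, on the region $\{T/2 \leq \tau \leq T, |x| \leq \tau\}$ I would introduce coordinates $(\rho, \sigma)$ with $\rho = \sqrt{\tau^2 - |x|^2}$ the Lorentzian distance to the origin and $\sigma$ parametrizing the hyperboloid $\{\tau^2 - |x|^2 = \rho^2\}$ (for instance via the rescaled spatial variable $x/\rho$, together with the angular variables). The key point is that the hyperbolic rotations $\O_{0i}$ are exactly the generators of the isometries of these hyperboloids: they are tangent to the level sets of $\rho$ and, together with the spatial rotations $\O_{ij}$, they span the tangent space to each hyperboloid. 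Thus $\tg$-derivatives control full derivatives along the hyperboloids, which is what a Sobolev embedding on a $3$-dimensional hyperboloid (the $\tfrac{5}{2} > \tfrac{3}{2}$ threshold) needs.

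The steps, in order, would be: (1) Restrict attention to a single hyperboloid $H_\rho = \{\tau^2 - |x|^2 = \rho^2, |x| \leq \tau\}$; since $\tau \in [T/2, T]$, on the support of $g$ one has $\rho \lesssim T$ and the portion of $H_\rho$ meeting the support is a bounded piece (in the hyperbolic metric) uniformly in $\rho$. (2) Apply the Sobolev inequality on $H_\rho$ (a fixed smooth $3$-manifold up to the scaling by $\rho$), expressing the $L^\infty$ norm of $g|_{H_\rho}$ in terms of the $L^2$ norm on $H_\rho$ of derivatives of $g$ along $H_\rho$ up to order $\lceil 5/2 \rceil = 3$, i.e. order $\leq 5/2$ in the averaged/interpolated sense used by Hörmander; crucially these tangential derivatives are linear combinations (with coefficients bounded on the support, since $\tau \sim T$ and $|x| \leq \tau$) of the vector fields $\O_{0i}, \O_{ij}$, hence of elements of $\tg$. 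This gives $M(\rho)^2 \lesssim \rho^{-2}\int_{H_\rho} \sum_{|I|\le 5/2}|\tg^I g|^2 \, dA_\rho$ after tracking how the intrinsic volume element scales with $\rho$. (3) Integrate in $\rho$ against $\rho\, d\rho$ and use the coarea-type formula $d\tau\,dx = (\text{Jacobian})\, d\rho \, dA_\rho$ to recognize the right-hand side as $\int |\tg^I g|^2 \, d\tau\, dx$; the weight $T^2$ on the left matches the $\rho^{-2}$ from the Sobolev step after using $\rho \lesssim T$ (and, in the region where $\rho$ is comparable to $\tau \sim T$, one must check the bound is not lost — this is where $\tau \in [T/2,T]$ is used to keep $\rho$ from degenerating against $T$, except near $|x| \sim \tau$ where $\rho$ is small, and there the $\rho^{-2}$ from Sobolev is the dangerous factor that the $\rho\,d\rho$ measure plus the Jacobian must absorb).

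The main obstacle I expect is exactly the bookkeeping of powers of $\rho$ (and of $T$) through the change of variables and the Sobolev step: near the light cone $\rho \to 0$ the hyperboloids degenerate, the induced metric rescales, and one must verify that the $\rho^{-2}$ loss from Sobolev embedding on a shrinking $3$-manifold is precisely compensated by the $\rho\, d\rho$ weight together with the Jacobian factor relating $d\rho\,dA_\rho$ to $d\tau\,dx$, so that no negative power of $\rho$ survives and only the harmless overall $T^{-2}$ is left. A secondary technical point is justifying that derivatives intrinsic to $H_\rho$ are dominated by the Cartesian vector fields in $\tg$ with coefficients that stay bounded on the truncated support $\{|x|\le\tau,\ \tau\le T\}$ — this follows because $\O_{0i}$ and $\O_{ij}$ span the tangent space of $H_\rho$ at every point with bounded-below "volume", uniformly on the relevant region, but it has to be stated carefully. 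Once these scaling checks are done, the estimate follows by combining the fixed-manifold Sobolev inequality with Fubini, essentially reproducing \cite[Lemma 7.3.4]{Hor} with $\re^3$ replaced by $\re^3$ (the manifold $K$ does not enter this particular proposition, only the later application does).
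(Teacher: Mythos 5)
Your overall strategy --- hyperbolic coordinates adapted to the forward cone, a Sobolev embedding along the hyperboloids $H_\rho$, then a coarea/Fubini step to reassemble the $d\tau\,dx$ integral --- is exactly the route of \cite[Lemma 7.3.1]{Hor}, which is what the paper cites in lieu of giving its own proof. However, the one quantitative claim you actually write down in step (2) is incorrect, and incorrect in a way that makes step (3) fail rather than being a constant to be checked later.

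Parametrize $H_\rho$ by $y=x$, so $\tau=\sqrt{\rho^2+|y|^2}$; then $d\tau\,dx=(\rho/\tau)\,d\rho\,dy$, and on the support $dA_\rho\sim dy$ up to bounded factors. Your claimed bound $M(\rho)^2\lesssim\rho^{-2}\int|\tg^I g|^2\,dy$ would turn $T^2\int M(\rho)^2\rho\,d\rho$ into $T^2\int\rho^{-1}\bigl(\int|\tg^Ig|^2dy\bigr)d\rho$, whereas the target $\int|\tg^Ig|^2\,d\tau\,dx$ is comparable to $T^{-1}\int\rho\bigl(\int|\tg^Ig|^2dy\bigr)d\rho$. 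Matching the $\rho$-weights would force $T^2/\rho\lesssim\rho/T$, i.e.\ $\rho^2\gtrsim T^3$, which fails on the entire range $0<\rho\le T$. So $\rho^{-2}$ is off by a factor of order $(T/\rho)^3$, and invoking $\rho\lesssim T$ only makes it worse, not better.

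The correct exponent is $T^{-3}$, and the mechanism is the opposite of the one you allude to: the boosts restricted to $H_\rho$ satisfy $\O_{0i}|_{H_\rho}=\tau\,\partial_{y_i}$, so they are of size $\sim T$ (not $\sim\rho$) relative to the intrinsic tangential derivatives, uniformly on $\{T/2\le\tau\le T\}$. Applying the scale-$R$ Sobolev inequality $\sup_y|f|^2\lesssim\sum_{|J|\le 2}R^{2|J|-3}\int|\partial_y^Jf|^2\,dy$ with $R\sim T$, and substituting $\partial_{y_i}=\tau^{-1}\O_{0i}$ (with $\tau\sim T$ and the commutators $[\O_{0i},\O_{0j}]=\O_{ij}$ staying inside $\tg$), yields $M(\rho)^2\lesssim T^{-3}\sum_{|I|\le 2}\int|\tg^Ig|^2\,dy$. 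Then $T^2\int M(\rho)^2\rho\,d\rho\lesssim T^{-1}\iint|\tg^Ig|^2\,\rho\,dy\,d\rho=T^{-1}\int|\tg^Ig|^2\,\tau\,d\tau\,dx\lesssim\sum_{|I|}\int|\tg^Ig|^2\,d\tau\,dx$, which closes. In short, the bookkeeping you flagged as the main obstacle is the whole content of the lemma: the intrinsic $\rho$-scaling of $H_\rho$ is a red herring, and the constant is governed by the $\tau\sim T$ magnitude of the boosts along each hyperboloid.
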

The proof of this statement is given in \cite[Lemma 7.3.1]{Hor}.
Let $u$ solve
\[
 \square u -\lk u=\der[2]{u}{\tau}-\sum\limits_{i=1}{3}\der[2]{u}{x_i}-\lk u= F.
\]
Introduce the hyperbolic polar coordinates $(\tau,x)=\rho\o,\rho=(\tau^2-|x|^2)^{\frac{1}{2}},\o\in \mathbb{S}^2$. Then the equation becomes
\[
 \der[2]{u}{\rho}+\frac{3}{\rho}\der{u}{\rho}-\lk u=\frac{1}{\rho^2}\Delta_H u +f,
\]
 where $\Delta_H$ is the Laplacian on the hyperbolic space. We have
\[
 \Delta_H=\sum_{i} (t\der{}{x_i}+x_i\der{}{t})^2-\sum_{k,j}(x_j\der{}{x_k}-x_k\der{}{x_j})^2
\]
Thus $v=\rho^\frac{3}{2} u$ obeys
\[
 \der[2]{v}{\rho}-\lk v=\rho^{3/2}(\rho^{-2}(\Delta_H u+ 3\frac{u}{4})+F).
\]
We decompose the right-hand-side dyadically in time. Let $\chi$ be a smooth function, supported on $[\frac{1}{2},2]$ s.t. $\sum\limits_{k=-\infty}^\infty{ \chi(\frac{\tau}{2^k})}=1$. Denote $f_k=\chi(\frac{\tau}{2^k})F$, $u_k=\chi(\frac{\tau}{2^k})(\Delta_H u+ n(n-2)\frac{u}{4})$
We apply the Lemma \ref{lm:mani}. We have 
\bb
\label{eq:sumk}
 |v(\rho,\o,y)|\leq \sum_k\int\limits_0^\rho {\rho^{3/2}(\rho^{-2}\|u_k(\rho,\o,\cdot)\|_{H^4(K)}+\|f_k(\rho,\o,\cdot)\|_{H^4(K)})}
\ee
We wish to estimate $u(\overline{\tau},\overline{x})$, thus we need to estimate the sum of the integrals above. Denote $M_k(\rho^2)=\sup\limits_{\o}\|f_k(\rho,\o,\cdot)\|_{H^4(K)}$ then we have
\[
 \int_0^{\overline{\rho}}{\rho^{\frac{3}{2}}\|f_k\|_{H^4(K)}d\rho}\leq \int_0^{\overline{\rho}}{M_k(\rho)^2\rho d\rho} \int_0^{\overline{\rho}}{\rho^2d\rho}.
\]
We have $2^{k-1}\leq t \leq 2^{k}$ therefore $\frac{\rho}{\overline{\rho}}\leq C\frac{\tau}{\overline{\tau}} \leq C\frac{2^k}{\overline{\tau}}$ in the support of $u$. Thus
\[
 \int \rho^2 d\rho \leq C\frac{\overline{\rho}^3 2^{3k}}{\overline{\tau}^3}
\]
According to Proposition \ref{pr:hyper}, we have 
\begin{align*}
 \int {M_k(\rho^2)\rho d\rho}\leq\ & 2^{-2k}C\sum\limits_{|I|\leq \frac{5}{2}}\int {|\tg^I \|f_k(\tau,x,\cdot)\||^2d\tau dx}\\
\leq\  &2^{-2k}C\sum\limits_{|I|\leq \frac{5}{2}}\int\limits_{\tau\leq\overline{\tau}}\int\limits_{K} {|\tg^I \lk^ 2f_k(\tau,x,y)|^2 d\tau dx dy}
\end{align*}
 Therefore, we have
\bb
\label{eq:estforce}
\begin{split}
\int {\rho^{3/2}}\|f_k(\rho,\o,\cdot)&\|_{H^4(K)}\\
\leq &\ \frac{2^\frac{k}{2}\rho^{3/2}}{\overline{\tau}^{3/2}}\sum\limits_{|I|\leq \frac{5}{2}}\int\limits_{2^{k-1}\leq \tau\leq{2^{k}}}\int\limits_{K} {|\tg^I \lk^2f_k(\tau,x,y)|^2 d\tau dx dy} \\ \nonumber
 \leq &\ \rho^{3/2}2^{k}\frac{1}{\overline{\tau}^{3/2}}\sum\limits_{|I|\leq 7}\sup\limits_{\tau\in [2^{k-1},2^{k+1}]\cap [2B,\overline{\tau}]} {\|\g^I f(\tau,\cdot,\cdot)\|}. \nonumber
\end{split}
\ee
To deal with the first term on the right-hand side of equation (\ref{eq:sumk}), we repeat the argument in \cite[Lemma 7.3.4]{Hor} verbatim, to get the estimate
\[
 \int {\rho^{-\frac{1}{2}}\|u_k\|_{H^4}}\leq \frac{\overline{\rho}^{3/2}}{\overline{\tau}^{3/2}}\sum_{|I|\leq \frac{9}{2}}\sup\limits_{2B\leq t} \|\tg^I u(t,\cdot)\|_{H^4(K)}.
\]
We use the energy estimate to bound this expression by the estimate (\ref{eq:estforce}) and the initial energy to get after summation in $k$
\[
 \overline{\rho}^{\frac{3}{2}}|u|=|v|\leq \frac{\overline{\rho}^{\frac32}}{\overline{\tau}^{\frac32}}\sum_{|I|\leq 6}{(\sum_i \|\g^I\g_i u_0\|+\sum_{k}2^k\sup\limits_{2^{k-1}\leq \tau \leq 2^{k+1}}\|\g^I f(\tau)\|)}
\]
Dividing by $\frac{\overline{\rho}^{\frac32}}{\overline{\tau}^{\frac32}}=\frac{\overline{\rho}^{\frac32}}{(t+1)^{\frac32}}$ completes the proof.

\end{document}